\begin{document}
 \title{Approximate Primal-Dual Fixed-Point based  Langevin  Algorithms for Non-smooth Convex Potentials\thanks{Received date, and accepted date (The correct dates will be entered by the editor).}}


          \author{Ziruo Cai\thanks{Shanghai Jiao Tong University, (sjtu\_caiziruo@sjtu.edu.cn).}
          \and Jinglai Li\thanks{University of Birmingham, (j.li.10@bham.ac.uk).}
          \and Xiaoqun Zhang\thanks{Shanghai Jiao Tong University, (xqzhang@sjtu.edu.cn).}}

         \pagestyle{myheadings} \markboth{Approximate Primal-Dual Fixed-Point based  Langevin  Algorithms}{Ziruo Cai, Jinglai Li and Xiaoqun Zhang} \maketitle

          \begin{abstract}
        
        The Langevin algorithms are frequently used to sample  the posterior 
        distributions in Bayesian inference. 
        In many practical problems, however, the posterior distributions 
         often consist of non-differentiable components, posing
        challenges for the standard Langevin algorithms, as 
        they require to evaluate the gradient of the energy function in each iteration. 
        To this end, a popular remedy is to utilize the proximity operator, 
        and as a result one needs to solve a proximity subproblem in each iteration.
        The conventional practice is to solve the subproblems accurately, which 
        can be exceedingly expensive, as the subproblem needs to be solved in each iteration.  
        We propose an approximate primal-dual fixed-point algorithm for solving the subproblem, which only seeks an approximate solution of the subproblem
        and therefore reduces the computational cost considerably. We provide theoretical analysis of the proposed method and also demonstrate its performance with numerical examples.  
          \end{abstract}
\begin{keywords}  Bayesian inference; Langevin alorithms; non-smooth convex potentials; proximity operators
\end{keywords}

\begin{AMS} 62F15 65C05 68U10
\end{AMS}

\section{Introduction}\label{intro}

The Bayesian inference approach has become increasingly popular as a tool for 
solving inverse problems~\cite{kaipio2006statistical,tarantola2005inverse}, largely due to 
its ability to quantify the uncertainty in the results. 
Simply put the Bayesian approach casts the sought parameter as a random variable and 
computes a posterior probability distribution of it, conditional on the data observed. 
The ability to accurately and efficiently compute the posterior distribution is crucial for the implementation of the Bayesian framework in real-world problems. 
A common practice to compute the posterior distributions is to generate samples from them, via some sampling schemes, 
such as the Markov Chain Monte Carlo (MCMC) methods~\cite{gilks1995markov}. 
To this end, the Langevin algorithm based Monte Carlo (LMC) methods \cite{ermak1975computer, parisi1981correlation, neal1992bayesian, roberts1996exponential} attract significant attention, 
mainly due to its ability to efficiently explore the state space. 
Loosely speaking, 
the Langevin algorithm consists of the following steps: it first constructs a Langevin system with the target distribution as its invariant measure, 
numerically solves the resulting Langevin system with random initial conditions  for sufficiently long time,
and regard the final states as samples drawn from the target distribution.
In particular the Langevin systems are usually solved with the Euler-Maruyama discretization, 
yielding a sampling scheme analogous to the gradient descent method for optimization. 
The algorithm can be incorporated into a MCMC framework by adding a Metropolis-Hasting accept-reject step, resulting the so-called Metropolis-adjusted Langevin algorithm (MALA) \cite{roberts1996exponential, roberts1998optimal, robert1999monte, roberts2002langevin, xifara2014langevin, dwivedi2018log};
as a contrast, the Langevin algorithms without the Metropolis adjustment is usually referred to as 
the unadjusted Langevin algorithms (ULA). 
We consider both types of Langevin algorithms in this work. 

The  standard Langevin algorithms require to evaluate the gradient of the energy function associate to the target distribution in 
each iteration. 
In many Bayesian inverse problems, however, non-differentiable prior distributions are often used -- a notable example is the Total Variation (TV)
prior used in image reconstruction problems. 
In such problems the posteriors are also not differentiable,
which renders 
the standard LMC algorithms infeasible.  
Considerably efforts have been devoted to developing Langevin algorithms  for non-differentiable distributions \cite{pereyra2016proximal, durmus2018efficient,  salim2019stochastic, chatterji2020langevin, lehec2021langevin, luu2021sampling, mou2022efficient, lau2022bregman} 

Among the existing methods, a very popular class of methods borrow ideas from the non-smooth optimization research,  
constructing an approximation of the actual target distribution,
 and as a result a convex proximity subproblem is solved in each iteration \cite{pereyra2016proximal,durmus2018efficient, luu2021sampling, mou2022efficient}. This idea has been used 
in both Metropolis adjusted and the unadjusted algorithms. 
The computational cost of these methods is typically much higher than the standard Langevin algorithms, as they
require to solve a proximity subproblem in each iteration.
In this regard, it is of critical importance to  improve the efficiency in solving the subproblem.
This work is devoted to addressing the issue and our approach has the following two main ingredients.
First, we adopt  the primal-dual fixed point (PDFP) method developed in~\cite{chen2013primal} for non-smooth convex composite potentials $ U(x) = f(x) + g(Bx) $ to solve the proximity subproblem.
Simply speaking, PDFP solves a non-smooth optimization problem using the primal-dual formulation, and it has been shown that the method has many desired theoretical and computational properties in \cite{chen2013primal, zhu2020stochastic}. 
More importantly, we propose  that, it may not be necessary to solve the subproblem accurately as is usually done in 
the existing methods; rather an approximate solution obtained by conducting a small number of optimization iterations  
may suffice for the sampling accuracy while reducing the computational cost considerably.
We study the strategy via both theoretical analysis and  numerical experiments. 
Theoretically we provide analysis of the sampling error due to the finite-step subproblem optimization. 
Via numerical experiments, we demonstrate that the approximate PDFP (i.e., that with a small number of optimization iterations) based Langevin algorithms,
especially the Metropolis-adjusted version, have very competitive performance in terms of sampling efficiency.

The rest of the paper is organized as follows: Section~\ref{sec:standard} reviews
the standard Langevin algorithms for smooth distributions.
Section~\ref{sec:nonsmooth} considers sampling non-smooth distributions and introduces the proximal MALA (PMALA) approach in particular. 
We present the approximate PDFP based Langevin algorithm in Section~\ref{sec:PDFP_LMC} and provide
a nonasymptotic  error analysis of it in Section~\ref{sec:convergence}. 
Two numerical examples are provided in Section~\ref{sec:examples} to demonstrate the performance of the proposed methods 
and finally Section~\ref{sec:conclusions} concludes the paper. 

\section{The standard Langevin algorithms}\label{sec:standard}

We start with a brief introduction to the standard Langevin algorithms for differentiable energy functions. 
Our goal here is to draw samples from a probability density in the form of 
\begin{equation}
\label{target}
    \pi(\theta) \propto \exp\left( -U(\theta) \right) ,\quad \theta\in \mathbb{R}^d,
\end{equation}
where $U(\theta)$ is the energy function. 
Throughout this work we assume that the energy function $ U(\theta) $ is   convex and  lower semi-continuous,
which is an essential presumption for many theoretical studies. 
Note here that the normalizing constant of $\pi$ in Eq.~\eqref{target} is usually not available in practice, and as such
the sampling methods should not require the knowledge of it.

Assuming $U(\theta)$ is differentiable, we can write down the following Langevin system: 
\begin{equation}
    \label{Langevin diffusion}
    \begin{aligned}
    \text{d} L_t &=  \nabla \log\pi(L_t) \text{d}t + \sqrt{2} \text{d} W_t \\
    &= - \nabla U(L_t) \text{d}t + \sqrt{2} \text{d} W_t,
    \end{aligned}
\end{equation}
where $W_t$ is a standard Wiener process. 
It  should be clear that  $\pi$ is the invariant distribution of process $L_t$.  
Apply the Euler-Maruyama discretization to Eq.~\eqref{Langevin diffusion} , and we obtain the ULA update:
\begin{algorithm}
\KwResult{$ \{ \theta_{n} \}_{n = 1}^{N}  $. }
    $L_U:$ Lipschitz constant of $\nabla U$. \\
    Set $\delta\in (0, 1 / L_U]$,   $ \theta_0\in \mathbb{R}^d $. \\
    \For{$n = 0$ to $N-1$} {
    $\theta_{n+1} = \theta_{n} - \delta \nabla U\left( \theta_{n} \right)+\sqrt{2\delta} \xi_{n}, \quad \xi_{n} \sim \mathcal{N}(0, I).$
    }
 \caption{ULA} \label{ULA}
\end{algorithm}

The choice of $\delta$ is given by \cite{dalalyan2017theoretical} that an upper bound of $\delta$ related to the Lipschitz continuity of $\nabla U$ should imply the convergence of ULA, and an upper bound of $\mathbb{E}U(\theta_n)$.

To remove the bias of ULA, a popular adjustment is to add a Metropolis step to this ULA \cite{robert1999monte, roberts2002langevin, roberts1998optimal}, resulting in the following procedure:
\begin{itemize}
\item Propose a new state by ULA: $\displaystyle Y_{n+1}= \theta_{n} - \delta \nabla U\left( \theta_{n} \right)+\sqrt{2\delta} \xi_{n}, \quad \xi_{n} \sim \mathcal{N}(0, I)$.
\item       Compute acceptance rate:
        $\displaystyle A(Y_{n+1}, \theta_{n}) = 
        \min\left(1, \dfrac{\pi(Y_{n+1})}{\pi(\theta_{n})}\cdot \dfrac{p(\theta_{n}| Y_{n+1})}{p(Y_{n+1}|\theta_{n}) } \right)$\\
        $\displaystyle 
        = \min\left(1, \dfrac{\pi(Y_{n+1})}{\pi(\theta_{n})}\cdot  
        \dfrac{  \exp\left( - \dfrac{1}{4\delta} \left\|  \theta_{n} - Y_{n+1} - \delta\nabla U(Y_{n+1}) \right\|_2^2 \right) }
        {  \exp\left( - \dfrac{1}{4\delta} \left\|  Y_{n + 1} -   \theta_{n} - \delta\nabla U(\theta_{n}) \right\|_2^2 \right) }  
        \right)  $.
\item     Draw $a\sim U[0, 1]$.
        
   \item    If  $a < A(Y_{n+1}, \theta_{n})$;
let $\theta_{n+1} = Y_{n+1}$;
otherwise,       let $\theta_{n+1} = \theta_{n}$.
\end{itemize}

The theoretical properties of the ULA have been extensively studied. Provided that one can have access to the accurate gradient $\nabla U$, the nonasymptotic analysis on convergence and errors is given in \cite{dalalyan2017theoretical} for strongly convex $U$ and \cite{durmus2017nonasymptotic} for convex $U$. Moreover,   \cite{durmus2019analysis} studies the problem  in the convex optimization perspective, by separately considering the gradient descent step and the random walk step in the ULA iteration. When the accurate evaluations of the gradient $\nabla U$ are not available, \cite{dalalyan2019user} investigates the case of using inaccurate gradient when $U$ is strongly convex.
Many techniques and results provided in \cite{dalalyan2017theoretical} will be used here in our theoretical analysis. 

\section{Langevin algorithms for non-smooth distributions}\label{sec:nonsmooth}

In many real-world applications  the energy function $U$ includes some non-differentiable terms.
Obviously the ULA and the MALA algorithms introduced in Section \ref{sec:standard} can not be used directly in this case. 
A straightforward solution is to use the subgradient of $U(\cdot)$ in such problems,
but the algorithm becomes significantly  inefficient compared to smooth distribution as is demonstrated in \cite{pereyra2016proximal}.
In this section we will discuss a proximal Langevin algorithm framework \cite{pereyra2016proximal,durmus2018efficient} for non-differentiable energy functions.

\subsection{Definitions and Propositions}\label{sec:defin}

We first provide some definitions and lemmas that are used in the rest of this work, all of which can be found in \cite{boyd2004convex, bauschke2011convex}.
\begin{definition}
\label{proximal operator definition}
The proximity operator $\operatorname{prox}_f(x):\mathbb{R}^d \rightarrow \mathbb{R}^d $ of function $f:\mathbb{R}^d \rightarrow \mathbb{R}$ is defined by
\begin{equation}
    \operatorname{prox}_{f}(x):= \arg\min_{u\in \mathbb{R}^d}\left(  \dfrac{\|u - x\|^{2}}{2} + f(u) \right).
\end{equation}
\end{definition}

\begin{definition}
\label{nonexpansive operator}
An operator $T:\mathbb{R}^d \rightarrow \mathbb{R}^d $ is firmly nonexpansive if and only if 
\[
\left\|  Tx - Ty  \right\|_2^2 \leqslant \left\langle Tx - Ty, x - y \right\rangle ,\quad \forall x, y\in \mathbb{R}^d.
\]
\end{definition}

\begin{definition}
\label{strongly convex definition}
Let $m \in \mathbb{R}, m > 0$. A  function $f$ is $m$-strongly convex if only if
\begin{equation}
    f(y) \geqslant f(x) + \langle \nabla f(x), y - x\rangle + \dfrac{m}{2} \left\|y - x  \right\|_2^2,\quad \forall x,y\in \mathbb{R}^d.
\end{equation}
\end{definition}

\begin{lemma}
\label{strongly convex lemma1}
Let $m \in \mathbb{R}, m > 0$. If $f$ is $m$-strongly convex, then 
\begin{equation}
    \langle x - y, \nabla f(x) - \nabla f(y) \rangle \geqslant m\| x - y  \|_2^2, \quad \forall x, y\in \mathbb{R}^d.
\end{equation}
\end{lemma}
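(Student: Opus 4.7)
The plan is to apply the strong convexity inequality from Definition~\ref{strongly convex definition} twice, once at each of the two points $x$ and $y$, and then add the resulting inequalities. This is the standard route to monotonicity estimates for gradients of strongly convex functions, and it turns the quadratic lower bounds at two base points into a bilinear estimate on the gradient difference.

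Concretely, I would first write the strong convexity bound at base point $x$ evaluated at $y$, namely $f(y) \geq f(x) + \langle \nabla f(x), y - x\rangle + \tfrac{m}{2}\|y-x\|_2^2$. Then I would write the symmetric bound at base point $y$ evaluated at $x$, namely $f(x) \geq f(y) + \langle \nabla f(y), x - y\rangle + \tfrac{m}{2}\|x-y\|_2^2$. Adding the two inequalities cancels the function values $f(x)+f(y)$ on both sides, and the two inner-product terms combine into $\langle \nabla f(x) - \nabla f(y), y - x\rangle$, while the two quadratic remainders add to $m\|x-y\|_2^2$. Rearranging and flipping a sign yields the claimed inequality
\begin{equation*}
\langle x - y,\ \nabla f(x) - \nabla f(y)\rangle \geq m\|x-y\|_2^2.
\end{equation*}

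Honestly, there is no real obstacle here; the main thing to be careful about is bookkeeping of signs when swapping $y-x$ and $x-y$ after addition. One subtlety worth flagging is that Definition~\ref{strongly convex definition} is stated under the tacit assumption that $\nabla f$ exists at the relevant points, so the proof as written applies directly to the differentiable case considered in the lemma; if one wanted to extend it to nondifferentiable $f$, the $\nabla f$ terms would be replaced by subgradients and the same addition trick would still go through. Since the lemma as stated assumes differentiability implicitly through the use of $\nabla f$, the two-line argument above suffices.
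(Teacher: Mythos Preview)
Your proof is correct and is the standard argument. The paper does not actually prove this lemma; it lists it among the preliminary facts in Section~\ref{sec:defin} with the blanket remark that all of them ``can be found in \cite{boyd2004convex, bauschke2011convex},'' so there is no paper-proof to compare against.
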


\begin{lemma}
\label{strongly convex lemma}
Let $m \in \mathbb{R}, m > 0$. Function $h(x)$ is $m$-strongly convex if and only if $h(x) - \dfrac{m}{2}\left\| x \right\|_2^2 $ is convex.
\end{lemma}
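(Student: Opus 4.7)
The statement is a standard characterization of strong convexity, and the natural approach is a direct two-sided algebraic manipulation. I will set $g(x) := h(x) - \tfrac{m}{2}\|x\|_2^2$ and work with the first-order (gradient) characterizations of convexity and $m$-strong convexity, translating between the inequality for $h$ and the corresponding inequality for $g$.

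For the forward implication, I would start from the strong convexity inequality in Definition~\ref{strongly convex definition} applied to $h$, namely
\[
h(y) \;\geqslant\; h(x) + \langle \nabla h(x), y-x\rangle + \tfrac{m}{2}\|y-x\|_2^2 ,
\]
and subtract $\tfrac{m}{2}\|y\|_2^2$ from both sides. Using $\nabla g(x) = \nabla h(x) - m x$, the right-hand side becomes
\[
g(x) + \langle \nabla g(x), y-x\rangle + \tfrac{m}{2}\|y-x\|_2^2 + m\langle x, y-x\rangle + \tfrac{m}{2}\|x\|_2^2 - \tfrac{m}{2}\|y\|_2^2,
\]
and the key identity
\[
\tfrac{m}{2}\|y-x\|_2^2 + m\langle x, y-x\rangle + \tfrac{m}{2}\|x\|_2^2 - \tfrac{m}{2}\|y\|_2^2 \;=\; 0
\]
collapses everything to the plain convexity inequality $g(y)\geqslant g(x)+\langle \nabla g(x), y-x\rangle$. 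This yields convexity of $g$ via its first-order characterization.

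For the backward implication, I would run the same computation in reverse: assume $g$ is convex, write the first-order inequality for $g$, substitute $g(x)=h(x)-\tfrac{m}{2}\|x\|_2^2$ and $\nabla g(x)=\nabla h(x)-mx$, and use the same quadratic identity to recover the extra $\tfrac{m}{2}\|y-x\|_2^2$ term on the right-hand side, giving the strong convexity inequality for $h$.

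There is no real obstacle here; the only thing to be careful about is consistency with the paper's definition, which writes strong convexity using $\nabla f$ (hence implicitly on differentiable functions), so I will state the proof under the same differentiability assumption and simply invoke the first-order characterization of convexity for $g$. If one wished to extend to the non-smooth case, the same argument goes through verbatim after replacing $\nabla h$ and $\nabla g$ by subgradients, but that generalization is not needed for the statement as given.
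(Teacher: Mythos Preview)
Your argument is correct: the quadratic identity you use does collapse the extra terms to zero, and the two implications follow directly from the first-order characterization of convexity, consistent with the paper's Definition~\ref{strongly convex definition}.

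Note, however, that the paper does not supply its own proof of this lemma. It is listed in Section~\ref{sec:defin} among the standard facts ``all of which can be found in \cite{boyd2004convex, bauschke2011convex}'', so there is nothing to compare against beyond a citation. Your write-up is a perfectly valid way to fill in the omitted details.
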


\begin{lemma}
\label{prox firmly nonexpansive}
For convex function $f:\mathbb{R}^d \rightarrow \mathbb{R}$,  $\operatorname{prox}_f$ and $I - \operatorname{prox}_f$ are firmly nonexpansive operators.
\end{lemma}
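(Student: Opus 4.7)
The plan is to first establish the firm nonexpansiveness of $\operatorname{prox}_f$, from which the corresponding property of $I - \operatorname{prox}_f$ will follow by a short algebraic identity.

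To begin, I would characterize $\operatorname{prox}_f(x)$ through its first-order optimality condition. Since $f$ is convex and $u \mapsto \tfrac{1}{2}\|u-x\|^2 + f(u)$ is strongly convex, the minimizer $p := \operatorname{prox}_f(x)$ exists and is unique. Writing the subdifferential of the objective at $p$ and setting it to contain zero gives $x - p \in \partial f(p)$; likewise $q := \operatorname{prox}_f(y)$ satisfies $y - q \in \partial f(q)$. These two characterizations are the only nontrivial ingredient I need.

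The key step is to invoke the monotonicity of the subdifferential of a convex function (a standard fact from \cite{bauschke2011convex}): whenever $a_i \in \partial f(b_i)$ for $i=1,2$, one has $\langle a_1 - a_2, b_1 - b_2 \rangle \geq 0$. Applied to the two inclusions above, this yields
\[
\langle (x-p) - (y-q),\, p - q \rangle \geq 0,
\]
which rearranges to $\|p-q\|^2 \leq \langle p - q,\, x - y \rangle$. This is exactly firm nonexpansiveness of $\operatorname{prox}_f$ in the sense of Definition \ref{nonexpansive operator}.

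For the second claim, write $T := \operatorname{prox}_f$ and expand
\[
\|(I-T)x - (I-T)y\|^2 = \|x-y\|^2 - 2\langle x-y,\, Tx - Ty\rangle + \|Tx - Ty\|^2.
\]
Applying the firm nonexpansiveness of $T$ just proved to the last term, this quantity is at most $\|x-y\|^2 - \langle x-y,\, Tx - Ty\rangle = \langle x - y,\, (I-T)x - (I-T)y\rangle$, which is the desired inequality for $I - T$. The only mildly delicate point is citing subdifferential monotonicity correctly when $f$ is assumed only convex (not differentiable); beyond that the argument is pure rearrangement, so I do not foresee any real obstacle.
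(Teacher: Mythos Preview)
Your proof is correct and entirely standard. Note, however, that the paper does not actually prove this lemma: it is listed among the background results in Section~\ref{sec:defin} with a citation to \cite{boyd2004convex, bauschke2011convex}, so there is no ``paper's own proof'' to compare against. Your argument via the optimality condition $x - \operatorname{prox}_f(x) \in \partial f(\operatorname{prox}_f(x))$ and monotonicity of the subdifferential is exactly the classical route found in those references.
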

\begin{definition}
\label{Lipschitz gradient definition}
Function $f$ has $M$-Lipschitz continuous gradient if
\begin{equation}
    \| \nabla f(x) - \nabla f(y)  \|_2 \leqslant M \| x - y   \|_2, \quad \forall x, y\in \mathbb{R}^d.
\end{equation}
\end{definition}

\begin{lemma}
\label{Lipschitz gradient lemma}
If $f$ has $M$-Lipschitz continuous gradient, then
\begin{equation}
f(y) \leqslant f(x) + \langle \nabla f(x), y - x\rangle + \dfrac{M}{2} \left\| y - x  \right\|_2^2,\quad \forall x,y\in \mathbb{R}^d.    
\end{equation}
Moreover, if $f$ is convex, then
\begin{equation}
M\langle x - y, \nabla f(x) - \nabla f(y) \rangle \geqslant  \|  \nabla f(x) - \nabla f(y)\|_2^2 , \quad \forall x, y\in \mathbb{R}^d.
\end{equation}
\end{lemma}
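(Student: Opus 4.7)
The plan is to treat the two assertions separately, establishing the descent inequality first and then using it as a tool for the co-coercivity estimate.

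For the first inequality, I would start from the fundamental theorem of calculus applied to the scalar function $t \mapsto f(x + t(y-x))$ on $[0,1]$, which gives
\begin{equation*}
f(y) - f(x) = \int_{0}^{1} \langle \nabla f(x + t(y-x)), y - x \rangle \, dt.
\end{equation*}
Subtracting the linear term $\langle \nabla f(x), y-x \rangle$, applying Cauchy--Schwarz, and invoking the $M$-Lipschitz assumption on $\nabla f$ would bound the residual by $\int_{0}^{1} M t \|y-x\|_2^2 \, dt = \frac{M}{2}\|y-x\|_2^2$, yielding the claimed descent inequality. This part is purely mechanical and I do not expect any obstacle.

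For the second (co-coercivity) inequality, my plan is to use the classical device of freezing one point and converting the question into a minimization of an auxiliary function. Fix $x,y$ and define
\begin{equation*}
\varphi_x(z) := f(z) - \langle \nabla f(x), z \rangle, \qquad \varphi_y(z) := f(z) - \langle \nabla f(y), z \rangle.
\end{equation*}
Convexity of $f$ together with $\nabla \varphi_x(x) = 0$ forces $x$ to be a global minimizer of $\varphi_x$, and similarly for $\varphi_y$ at $y$. Noting that both $\varphi_x$ and $\varphi_y$ inherit the $M$-Lipschitz gradient of $f$, I would then apply the descent inequality already established, evaluated at the ``gradient step'' point $y - \frac{1}{M}\nabla \varphi_x(y)$, and use the minimality of $x$ to obtain
\begin{equation*}
\varphi_x(x) \leqslant \varphi_x(y) - \frac{1}{2M} \|\nabla f(y) - \nabla f(x)\|_2^2.
\end{equation*}
The symmetric argument applied to $\varphi_y$ gives the analogous bound with $x$ and $y$ swapped. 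Adding the two inequalities causes all instances of $f(x)$ and $f(y)$ to cancel, leaving exactly $\langle \nabla f(x) - \nabla f(y), y - x\rangle \leqslant -\frac{1}{M}\|\nabla f(x) - \nabla f(y)\|_2^2$, which after multiplying by $-M$ is the desired inequality.

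The only potentially subtle point is ensuring that the auxiliary functions $\varphi_x, \varphi_y$ inherit both the convexity and the Lipschitz gradient bound, but this is immediate since each differs from $f$ only by an affine term, which does not affect either property. Consequently no serious obstacle arises, and the two parts combine to give the full lemma.
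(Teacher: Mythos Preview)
Your proof is correct. Both parts follow the standard textbook arguments: the descent inequality via the integral representation of $f(y)-f(x)$, and co-coercivity via the auxiliary functions $\varphi_x,\varphi_y$ (the Baillon--Haddad device), with no gaps.

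There is nothing to compare against, however: the paper does not prove this lemma at all. It is listed in Section~\ref{sec:defin} among several background facts prefaced by ``all of which can be found in \cite{boyd2004convex, bauschke2011convex},'' and no proof or sketch is given. Your write-up therefore supplies an argument where the paper simply cites the literature.
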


\begin{definition}
\label{conjugate function}
The conjugate function of function $g$ is defined by
\begin{equation}
    g^*(v) = \sup_{y\in \text{dom}(g)} \left(v^T y - g(y)  \right),
\end{equation}
where $ v\in V =  \text{dom}(g^*) = \{  v| g^*(v) < \infty \} $.
\end{definition}

\subsection{Proximal Langevin algorithms}

%

To tackle this situation when $U$ is convex but non-smooth, \cite{pereyra2016proximal} and \cite{durmus2018efficient, pereyra2020accelerating} respectively replace the original $\pi$ with two continuously differentiable distributions which can be arbitrarily close to $\pi$. In this work we follow the Moreau approximation settings in \cite{pereyra2016proximal}, for any $ \rho > 0 $, define the $ \rho- $Moreau approximation of $ \pi $ as
\begin{equation}
    \label{Moreau approximation}
    \pi_{\rho}(\theta) = \dfrac{1}{K'}\sup_{u\in \mathbb{R}^{d}}\left(\pi(u)\exp\left( - \dfrac{\| u - \theta \|^{2}}{2\rho}   \right)\right).
\end{equation}
By simple computation,
\begin{equation}
    \label{Moreau approximation 2}
    \pi_\rho(\theta)  =    \dfrac{1}{K'} \exp\left(  -U_\rho(\theta)  \right),
\end{equation}
where 
\[
U_\rho(\theta) = \min_{y\in \mathbb{R}^d}\left( U(y) + \dfrac{\left\| y - \theta \right\|_2^2}{2\rho} \right) = U(\operatorname{prox}_{\rho U}(\theta)) + \dfrac{\| \operatorname{prox}_{\rho U}(\theta) - \theta \|^2}{2\rho}
\]
is the Moreau envelope \cite{moreau1965proximity} of $U(\theta)$. By \cite{bauschke2011convex, combettes2005signal}, $ \pi_\rho $ and $U_\rho$ have several useful properties summarized in Lemma \ref{Moreau lemma}:
\begin{lemma}
\label{Moreau lemma}
(1) When $ \rho\rightarrow 0  $, $ \pi_{\rho}(\theta) \rightarrow \pi(\theta) $ pointwisely and $ U_{\rho}(\theta) \rightarrow U(\theta) $ pointwisely. \\
(2) $U_\rho(\theta)$ is convex and has $\frac{1}{\rho}-$Lipschitz continuous gradient. \\
(3) $U(\theta)$ and $U_\rho(\theta)$ have the same minimizers. \\
(4) Even $\pi$ and $U$ can be non-differentiable, $\pi_\rho$ and $U_\rho$ are continuously differentiable and 
\begin{equation}
    \nabla U_\rho(\theta)   = \dfrac{\theta - \operatorname{prox}_{\rho U}(\theta)  }{\rho}.
\end{equation}
\end{lemma}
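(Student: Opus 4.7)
The plan is to address the four properties in an order that lets later parts reuse the computations of earlier ones. I would start by naming $y_\rho(\theta) := \operatorname{prox}_{\rho U}(\theta)$, which is well-defined and unique because $U$ is convex and lower semi-continuous (so the objective in Definition \ref{proximal operator definition} applied to $\rho U$ is strongly convex). The first-order optimality condition for $y_\rho(\theta)$ is $0 \in \rho\,\partial U(y_\rho(\theta)) + (y_\rho(\theta) - \theta)$, equivalently
\begin{equation*}
\frac{\theta - y_\rho(\theta)}{\rho} \in \partial U(y_\rho(\theta)),
\end{equation*}
which is the identity I will lean on throughout.

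For (4) I would differentiate the envelope directly. Let $F(\theta,y) = U(y) + \|y-\theta\|_2^2/(2\rho)$; the minimiser $y_\rho(\theta)$ is a smooth-in-$\theta$ selection (continuity follows from firm nonexpansiveness of $\operatorname{prox}_{\rho U}$, Lemma \ref{prox firmly nonexpansive}). By the envelope/Danskin argument, or simply by plugging in the optimality condition above and cancelling the subgradient term against the partial derivative in $y$, one gets $\nabla U_\rho(\theta) = -(y_\rho(\theta)-\theta)/\rho = (\theta - y_\rho(\theta))/\rho$. For (2), convexity of $U_\rho$ follows because $U_\rho$ is the infimal convolution of the convex functions $U$ and $\|\cdot\|_2^2/(2\rho)$, and the $1/\rho$-Lipschitz estimate on $\nabla U_\rho$ is immediate from the formula in (4): write $\nabla U_\rho = (I - \operatorname{prox}_{\rho U})/\rho$, then by Lemma \ref{prox firmly nonexpansive} $I - \operatorname{prox}_{\rho U}$ is firmly nonexpansive and in particular $1$-Lipschitz, yielding the $1/\rho$ constant. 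This also gives (4)'s differentiability assertion, since a Lipschitz gradient implies $C^1$ regularity.

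For (3) I would argue two inclusions. Always $U_\rho(\theta) \leq F(\theta,\theta) = U(\theta)$, and equality holds iff $y = \theta$ is a minimiser of $F(\theta,\cdot)$, iff $0 \in \partial U(\theta)$, iff $\theta$ minimises $U$. On the other hand, $\min_\theta U_\rho(\theta) = \min_{\theta,y} F(\theta,y) = \min_y U(y)$, the last equality coming from optimising over $\theta$ first (which forces $\theta = y$ and kills the quadratic). Combining these identifies the minimiser sets. For (1), pointwise convergence $U_\rho(\theta) \to U(\theta)$ as $\rho \to 0^+$ follows from the monotone sandwich $U_\rho(\theta) \leq U(\theta)$ together with the standard fact that $y_\rho(\theta) \to \theta$ as $\rho \to 0$ (so by lower semi-continuity $\liminf_{\rho\to 0} U(y_\rho(\theta)) \geq U(\theta)$, and the quadratic penalty contribution vanishes); then $\pi_\rho \to \pi$ pointwise follows from continuity of $\exp(-\cdot)$ up to the normalising constants.

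The only step that requires care is the $y_\rho(\theta)\to\theta$ claim used in (1), since one needs an upper bound on $\|y_\rho(\theta)-\theta\|_2$. I would handle this by picking a fixed $y_0$ with $U(y_0) < \infty$, using $F(\theta,y_\rho(\theta)) \leq F(\theta,y_0)$ to get $\|y_\rho(\theta)-\theta\|_2^2 \leq 2\rho(U(y_0) - U(y_\rho(\theta))) + \|y_0 - \theta\|_2^2$, and then invoking coercivity/lower boundedness of $U$ on bounded sets (or a subdifferential argument) to control $U(y_\rho(\theta))$ from below. This coercivity/lsc bookkeeping is the main technical obstacle; everything else is essentially a direct computation from the optimality condition.
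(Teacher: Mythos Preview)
The paper does not actually prove this lemma: it states the four properties and cites \cite{bauschke2011convex, combettes2005signal} for them. Your sketch is a correct self-contained treatment of these standard Moreau-envelope facts, and goes well beyond what the paper itself supplies.

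One small logical wrinkle worth tidying: in your handling of (4) and (2) you conclude ``a Lipschitz gradient implies $C^1$ regularity'' to obtain differentiability, but that presupposes a gradient already exists. The cleaner order is to first establish Fr\'echet differentiability directly---e.g., expand $U_\rho(\theta+h)-U_\rho(\theta)$ using the optimality condition at $y_\rho(\theta)$ to extract the linear term $\langle(\theta-y_\rho(\theta))/\rho,\,h\rangle$ with a remainder bounded by $\|h\|_2^2/(2\rho)$---and only then read off the $1/\rho$-Lipschitz constant from $I-\operatorname{prox}_{\rho U}$ via Lemma~\ref{prox firmly nonexpansive}. Also, ``smooth-in-$\theta$ selection'' overstates what you have or need: $1$-Lipschitz continuity of $\operatorname{prox}_{\rho U}$ suffices. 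For (1), your bookkeeping concern is easily resolved here because $U$ is finite-valued on $\mathbb{R}^d$ and (as the negative log of an integrable density) bounded below, so taking $y_0=\theta$ gives $\|y_\rho(\theta)-\theta\|_2^2\le 2\rho\bigl(U(\theta)-\inf U\bigr)\to 0$ directly.
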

Replace the original $\pi$ with $\pi_\rho$ in Langevin diffusion (\ref{Langevin diffusion}) and one obtains the SDE
\begin{equation}
    \label{Langevin diffusion Moreau}
    \begin{aligned}
        \text{d} L_t^\rho &=  \nabla \log \pi_\rho(L_t^\rho) \text{d}t + \sqrt{2} \text{d} W_t\\
        &= - \nabla U_\rho(L_t^\rho) \text{d}t + \sqrt{2} \text{d} W_t.
    \end{aligned}
\end{equation}
Here the solution $L_t^\rho\rightarrow \pi_\rho$ in TV norm as $t\rightarrow +\infty$ from Lemma \ref{pi_rho exponentially fast lemma} (see also Lemma 1 in \cite{dalalyan2017theoretical}). By Euler-Maruyama discretization and  Lemma \ref{Moreau lemma} (4) one obtains the proximal ULA \cite{pereyra2016proximal}:
\begin{equation}
    \begin{aligned}
    \theta_{n+1}&=\theta_{n} - \delta \nabla U_{\rho} \left(\theta_{n}\right)+\sqrt{2\delta} \xi_{n}  \\
    & =  \theta_{n} - \delta\frac{\theta_n - \operatorname{prox}_{\rho U}\left(\theta_{n}\right)}{\rho} +\sqrt{2\delta} \xi_{n}, \quad \xi_{n} \sim \mathcal{N}(0, I) \\
    & =\left(1-\frac{\delta}{ \rho}\right) \theta_{n}+\frac{\delta}{\rho} \operatorname{prox}_{\rho U}\left(\theta_{n}\right)+\sqrt{2\delta} \xi_{n}, \quad \xi_{n} \sim \mathcal{N}(0, I).
    \end{aligned}
\end{equation}

\begin{algorithm}
\KwResult{$ \{ \theta_{n} \}_{n = 1}^{N}  $. }
    Set $ \rho > 0$, $\delta\in (0, \rho]$,   $ \theta_0\in \mathbb{R}^d $. \\
    \For{$n = 0$ to $N-1$} {
    $\displaystyle \theta_{n+1}= \left(1-\frac{\delta}{ \rho}\right) \theta_{n}+\frac{\delta}{\rho} \operatorname{prox}_{\rho U}\left(\theta_{n}\right)+\sqrt{2\delta} \xi_{n}, \quad \xi_{n} \sim \mathcal{N}(0, I)$
    }
 \caption{Proximal ULA} \label{proximalULA}
\end{algorithm}
Basically $\rho$ is the parameter of the Moreau approximation and $\delta$ is the stepsize of the Euler-Maruyama discretization, therefore $\delta$ should be independent of $\rho$. For the stability of the algorithm $\delta$ should be within  $ (0, \rho]$ (Proposition 1 in \cite{dalalyan2017theoretical}), and \cite{pereyra2016proximal} sets $\delta = \rho$ yielding a more concise algorithm:
\begin{equation}
    \Rightarrow \theta_{n+1}=\operatorname{prox}_{\delta h} \left(\theta_{n}\right)+\sqrt{2\delta} \xi_{n}, \quad \xi_{n} \sim \mathcal{N}(0, I).
\end{equation}
However, for discretization error analysis one should fix $\rho$ and let $\delta\rightarrow 0$. In this work, we do not constrain $\delta$ to be equal to $\rho$ and in the later section we denote proximal ULA by Algorithm \ref{proximalULA}.

So far, proximal ULA (Algorithm \ref{proximalULA}) has introduced two errors to draw samples from $\pi$: one is the Moreau approximation error from $\pi$ to $\pi_\rho$, another is the discretization error from Langevin diffusion (\ref{Langevin diffusion Moreau}) to Algorithm \ref{proximalULA}. One can eliminate these errors by adding a Metropolis-Hasting accept-reject step \cite{robert1999monte, roberts1998optimal, roberts2002langevin, roberts1996exponential} and turns proximal ULA into proximal Metropolis-Adjusted Langevin Algorithm (MALA) \cite{pereyra2016proximal}: 

\begin{algorithm}[ht!]

\KwResult{$ \{ \theta_{n} \}_{n = 1}^{N}  $. }
    Set $ \rho > 0$, $\delta\in (0, \rho]$,  $ \theta_0\in \mathbb{R}^d $. \\
    \For{$n = 0$ to $N-1$} {
        \If{$n > 0$}{ 
        Compute $\operatorname{prox}_{\rho U}\left(\theta_{n}\right)$ according to the previous accept-reject step: \\
        $\operatorname{prox}_{\rho U}\left(\theta_{n}\right) = \operatorname{prox}_{\rho U}\left(\theta_{n - 1}\right)$, or $\operatorname{prox}_{\rho U}\left(\theta_{n}\right) = \operatorname{prox}_{\rho U}\left( Y_{n} \right)$
        }
        Propose a new state by proximal ULA: $\displaystyle Y_{n+1}= \left(1-\frac{\delta}{ \rho}\right) \theta_{n}+\frac{\delta}{\rho} \operatorname{prox}_{\rho U}\left(\theta_{n}\right)+\sqrt{2\delta} \xi_{n}, \quad \xi_{n} \sim \mathcal{N}(0, I)$\\
        Compute acceptance rate:
        $\displaystyle A(Y_{n+1}, \theta_{n}) = 
        \min\left(1, \dfrac{\pi(Y_{n+1})}{\pi(\theta_{n})}\cdot \dfrac{p(\theta_{n}| Y_{n+1})}{p(Y_{n+1}|\theta_{n}) } \right)$\\
        $\displaystyle 
        = \min\left(1, \dfrac{\pi(Y_{n+1})}{\pi(\theta_{n})}\cdot  
        \dfrac{  \exp\left( - \dfrac{1}{4\delta} \left\|  \theta_{n} - \left(1-\frac{\delta}{ \rho}\right) Y_{n+1}-\frac{\delta}{\rho} \operatorname{prox}_{\rho U}\left(Y_{n+1}\right) \right\|_2^2 \right) }
        {  \exp\left( - \dfrac{1}{4\delta} \left\|  Y_{n + 1} -  \left(1-\frac{\delta}{ \rho}\right) \theta_{n}-\frac{\delta}{\rho}\operatorname{prox}_{\rho U}\left(\theta_{n}\right) \right\|_2^2 \right) }  
        \right)  $
        \\
        Sample $a$ from uniform distribution: $a\sim U[0, 1]$.
        \\
        \eIf{$a < A(Y_{n+1}, \theta_{n})$}{
        Accept $Y_{n+1}$: $\theta_{n+1} = Y_{n+1}$
        }{
        Reject $Y_{n+1}$: $\theta_{n+1} = \theta_{n}$
    } 
}
 \caption{Proximal MALA}\label{proximal MALA}
\end{algorithm}
From above, $\{ \theta_{n} \}_{n = 1}^N  $ in Algorithm \ref{proximal MALA} is actually a Metropolis-Hastings markov chain proposed by proximal ULA. Noted that the information of the Moreau approximation $\pi_\rho$ is included in the proposal step but in the accept-reject step $\pi$ is evaluated instead. For efficient computation, in the $n$-th iteration we need to know both $ \operatorname{prox}_{\rho U}\left(\theta_{n}\right) $ and $\operatorname{prox}_{\rho U}\left(Y_{n+1}\right) $, but actually only $ \operatorname{prox}_{\rho U}\left(Y_{n+1}\right) $ need to be computed since $ \operatorname{prox}_{\rho U}\left(\theta_{n}\right) $ can be obtained from the $(n - 1)$-th iteration: If $\theta_n = Y_n$ then $\operatorname{prox}_{\rho U}\left(\theta_{n}\right) = \operatorname{prox}_{\rho U}\left(Y_{n}\right)$, which has been computed in the $(n - 1)$-th iteration. If $\theta_n = \theta_{n - 1}$ then $\operatorname{prox}_{\rho U}\left(\theta_{n}\right) = \operatorname{prox}_{\rho U}\left(\theta_{n-1}\right)$. The challenge is, 
 each iteration of the sampling scheme  involves solving an optimization problem $\operatorname{prox}_{\rho U}\left(x\right)$. 
 In both PMALA \cite{pereyra2016proximal} and MYULA \cite{pereyra2020accelerating}, the algorithm of \cite{chambolle2004algorithm} is used to solve
 the subproblem, and in next section we will discuss an alternative method for this.

\section{The approximate PDFP based Langevin Algorithms}\label{sec:PDFP_LMC}
\subsection{The primal-dual fixed point algorithm}\label{sec:PDFP}

Before considering drawing samples from the given distribution $\pi(\theta)$, in this section we introduce the Primal-Dual Fixed Point (PDFP) algorithm developed in \cite{chen2013primal} and some theoretical results of it. 
Here we assume that the energy function $U$ can be decomposed into two parts
\begin{equation}
U(x) = f(x) + g(Bx),\label{e:fg}
\end{equation}
where 
\begin{itemize}
\item $f$ is convex and continuously differentiable with a $M_2-$Lipschitz gradient. 
\item $B$ is a linear operator.
\item $g$ is convex and perhaps non-differentiable but having a proximity operator $ \operatorname{prox}_g(x) $ which is generally easy to compute.
\end{itemize}
Note here that Eq.~\eqref{e:fg} is a very generic form of posterior distributions in Bayesian inference. 

Recall the following convex minimization problem which can be understood as computing a point estimator by maximizing the posterior distribution:
\begin{equation}
    \label{MAP}
    \min_{x\in \mathbb{R}^d} ~ f(x) + g(Bx).
\end{equation}
Alternatively, Eq.~\eqref{MAP} can be reformulated as  
 a min-max problem 
\begin{equation}
    \label{minmax reformulation}
    \min_{x\in \mathbb{R}^d} \max_{v\in V} ~ f(x) + \langle  Bx, v \rangle - g^*(v).
\end{equation}
Both problem (\ref{MAP}) and its min-max reformulation (\ref{minmax reformulation}) have been well studied in the last decades, e.g., \cite{esser2010general, chambolle2011first}.
The PDFP method (detailed in Algorithm~\ref{PDFP for optimization})  
is a fixed point iteration based algorithm to solve the min-max problem~\eqref{minmax reformulation}
and consequently it solves problem~\eqref{MAP} as well.

\begin{algorithm}

\KwResult{$ \{x_k\}_{k = 1}^K  $. }
    Set  
    $ 0 < \lambda \leqslant \dfrac{1}{\lambda_{\max}(BB^T)  } $, 
    $ 0 < \gamma < \dfrac{2}{M_2} $, 
    $  x_0\in \mathbb{R}^d$, $ v_0 \in V $.

    \For{$k = 0$ to $K-1$}{
        $y_{k+1}  =  x_k - \gamma  \nabla f(x_k)  - \gamma  B^T v_k$ \\
        $v_{k+1}  =  \operatorname{prox}_{\frac{\lambda}{\gamma}g^*}\left( \dfrac{\lambda}{\gamma}By_{k+1}  + v_k  \right)$ \\
        $x_{k+1}  =  x_k - \gamma  \nabla f(x_k)  - \gamma B^T v_{k+1}$  
}
 \caption{Primal-Dual Fixed Point method for problem (\ref{MAP})}\label{PDFP for optimization}
\end{algorithm}

As one can see, Algorithm \ref{PDFP for optimization} generates two sequence, the primal variable sequence $ \{x_k\}_{k = 1}^K $ and the dual variable sequence $ \{ v_k \}_{k = 1}^K $. For the min-max problem (\ref{minmax reformulation}), $ x_k $ and $v_k$ will converge to the optimal primal point $ x^* $ and the optimal dual point $ v^* $ respectively. Noted that the convergence of PDFP (Algorithm \ref{PDFP for optimization}) does not require the strongly convexity of $U(x)$, but from Theorem 3.7 in \cite{chen2013primal} one has the linear convergence rate when $f(x)$ is strongly convex and $ \rho_{\min}(BB^T)>0 $.

To simplify the notation, in the $k$-th iteration of Algorithm \ref{PDFP for optimization} one denotes $T_1(v_k, x_k)$ and $T_2(v_k, x_k)$ by 
\begin{equation}
\label{T1 T2 definition}
\left\{
\begin{aligned}
v_{k+1} &= \operatorname{prox}_{\frac{\lambda}{\gamma}g^*}\left( \dfrac{\lambda}{\gamma}B\left(x_k - \gamma  \nabla f(x_k)  - \gamma  B^T v_k\right)  + v_k  \right) &=: T_1(v_k, x_k)\\
x_{k+1} &= x_k - \gamma  \nabla f(x_k)  - \gamma B^T T_1(v_k, x_k) &=: T_2(v_k, x_k).
\end{aligned}
\right.
\end{equation}
Define the operator $T(v, x)$  by 
\begin{equation}
    T(v_k, x_k) := (v_{k+1}, x_{k+1}) = (T_1(v_k, x_k), T_2(v_k, x_k)),
\end{equation}
then one can deduce the fixed point property of PDFP proved in \cite{chen2013primal}:
\begin{lemma}
\label{PDFP fixed point lemma}
$(v^*, x^*)$ is a fixed point of $T$:
\begin{equation}
    (v^*, x^*) = T(v^*, x^*),
\end{equation}
which is 
\begin{equation}
    \left\{
    \begin{aligned}
    v^* &= \operatorname{prox}_{\frac{\lambda}{\gamma}g^*}\left( \dfrac{\lambda}{\gamma}B\left(x^* - \gamma  \nabla f(x^*)\right) +  (I- \lambda B B^T) v^*   \right)  \\
    x^* &= x^* - \gamma  \nabla f(x^*)  - \gamma B^T v^*.
    \end{aligned}
    \right.
\end{equation}
\end{lemma}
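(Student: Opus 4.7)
The plan is to derive the two fixed-point equations directly from the KKT/saddle-point optimality conditions of the min-max problem~\eqref{minmax reformulation}. Since $(x^*, v^*)$ is a saddle point, differentiating (or applying subdifferential calculus) in each variable gives the two conditions
\begin{equation*}
\nabla f(x^*) + B^T v^* = 0,\qquad Bx^* \in \partial g^*(v^*).
\end{equation*}
The first of these is the main workhorse; the second is the dual complementary relation that will let me invert the proximity operator.

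Next I would verify the primal fixed-point equation. Substituting the first KKT condition into the right-hand side of the $x$-update gives $x^* - \gamma \nabla f(x^*) - \gamma B^T v^* = x^* - \gamma(\nabla f(x^*) + B^T v^*) = x^*$, so the equation $x^* = T_2(v^*, x^*)$ holds immediately.

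For the dual fixed-point equation, I would use the standard characterization of the proximity operator of a convex function: for any $\sigma > 0$, $v = \operatorname{prox}_{\sigma g^*}(u)$ iff $u - v \in \sigma \,\partial g^*(v)$. Setting $\sigma = \lambda/\gamma$ and $u = \tfrac{\lambda}{\gamma} B(x^* - \gamma \nabla f(x^*)) + (I - \lambda BB^T) v^*$, a direct simplification gives
\begin{equation*}
u - v^* = \tfrac{\lambda}{\gamma}\, B\bigl(x^* - \gamma \nabla f(x^*) - \gamma B^T v^*\bigr),
\end{equation*}
and by the primal fixed-point relation already established, this reduces to $\tfrac{\lambda}{\gamma} B x^*$. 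Hence the condition $u - v^* \in \tfrac{\lambda}{\gamma} \partial g^*(v^*)$ is equivalent to $Bx^* \in \partial g^*(v^*)$, which is exactly the second KKT condition.

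The only mildly delicate step is the subdifferential manipulation in deriving the KKT conditions when $g$ (and hence $g^*$) is nonsmooth, but this is standard convex analysis (Fenchel–Moreau / Fermat's rule for saddle points) and poses no real obstacle; the argument otherwise is a short algebraic verification using the characterization of $\operatorname{prox}_{\sigma g^*}$.
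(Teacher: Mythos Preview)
Your argument is correct: the saddle-point conditions $\nabla f(x^*)+B^Tv^*=0$ and $Bx^*\in\partial g^*(v^*)$ immediately yield the primal fixed-point relation, and the subdifferential characterization of $\operatorname{prox}_{\frac{\lambda}{\gamma}g^*}$ together with the computation $u-v^*=\tfrac{\lambda}{\gamma}Bx^*$ gives the dual one. The paper itself does not supply a proof of this lemma; it simply cites \cite{chen2013primal} for the fixed-point property of the PDFP operator $T$. Your derivation is exactly the standard verification one finds in that reference (reading the optimality conditions as fixed-point equations via the resolvent characterization of the prox), so there is nothing materially different to compare.
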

Different from Theorem 3.7 in \cite{chen2013primal}, here we give another version of the linear convergence lemma of PDFP. This lemma shows that $x_k\rightarrow x^*$ and $v_k\rightarrow v^*$ simultaneously, but the linear convergence rate is for $(v, x)$ with the norm defined by $ \left\| (v, x)  \right\|_{\frac{\gamma^2}{\lambda}} := \sqrt{\left\| x \right\|_2^2 + \dfrac{\gamma^2}{\lambda}\left\| v \right\|_2^2} $, which means $x_k $ alone does not necessarily converges at a linear rate to $ x^*$ ignoring $v_k$. For the simplification of notation we define $\phi(x) := x - \gamma \nabla  f(x) $, $M := I - \lambda BB^T$. 
\begin{lemma}
\label{PDFP norm lemma}
Assume that $x^*$ and $v^*$ are the optimal solutions of problem (\ref{minmax reformulation}). Assume that $\{x_k\}_k$ and $\{v_k\}_k$ are the two sequences generated by Algorithm \ref{PDFP for optimization}. Assume that $ \gamma, \lambda$ are the parameters in Algorithm \ref{PDFP for optimization}. If   $\rho_{\min}(BB^T) > 0$ and $\exists \eta_1 \in [0, 1)$ such that $\left\| \phi(x) - \phi(y) \right\|_2 \leqslant \eta_1\|x - y\|_2 $, $\forall x, y\in \mathbb{R}^d$, then $\forall k\in \mathbb{N}$,
\begin{equation}
    \left\| x_k - x^* \right\|_2^2 + \dfrac{\gamma^2}{\lambda}\left\| v_k - v^* \right\|_2^2 \leqslant \eta^k \left( \left\| x_0 - x^* \right\|_2^2 + \dfrac{\gamma^2}{\lambda}\left\| v_0 - v^* \right\|_2^2 \right), \quad  0\leqslant \eta < 1,
\end{equation}
where  $ \eta = \max\left( \eta_1^2, 1 - \lambda\rho_{\min} (BB^T)\right) $.
    
\end{lemma}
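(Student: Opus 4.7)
The plan is to derive a one-step contraction inequality for the combined quantity $E_k := \|x_k - x^*\|_2^2 + \frac{\gamma^2}{\lambda}\|v_k - v^*\|_2^2$ and then iterate. The starting point is to subtract the fixed-point identity (Lemma \ref{PDFP fixed point lemma}) from the PDFP update (\ref{T1 T2 definition}), which gives
\[
x_{k+1} - x^* = \bigl(\phi(x_k) - \phi(x^*)\bigr) - \gamma B^T(v_{k+1} - v^*),
\]
while $v_{k+1}$ and $v^*$ are the images under the same proximity operator $\operatorname{prox}_{(\lambda/\gamma)g^*}$ of the two points $\frac{\lambda}{\gamma}B\phi(x_k) + Mv_k$ and $\frac{\lambda}{\gamma}B\phi(x^*) + Mv^*$, respectively.

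Next I would expand $\|x_{k+1} - x^*\|_2^2$ from the first identity, bound the leading term by $\eta_1^2\|x_k - x^*\|_2^2$ using the hypothesis on $\phi$, and keep a cross term $-2\gamma\langle \phi(x_k) - \phi(x^*),\, B^T(v_{k+1} - v^*)\rangle$ plus $\gamma^2\|B^T(v_{k+1} - v^*)\|_2^2$. To cancel the cross term, I would apply Lemma \ref{prox firmly nonexpansive} (firm nonexpansiveness of the prox), which yields
\[
\|v_{k+1} - v^*\|_2^2 \leqslant \tfrac{\lambda}{\gamma}\langle B^T(v_{k+1} - v^*),\,\phi(x_k) - \phi(x^*)\rangle + \langle v_{k+1} - v^*,\,M(v_k - v^*)\rangle,
\]
multiply it by $\frac{2\gamma^2}{\lambda}$, and add to the expansion of $\|x_{k+1} - x^*\|_2^2$. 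This eliminates the cross term and, after adding $\frac{\gamma^2}{\lambda}\|v_{k+1} - v^*\|_2^2$ to both sides, leaves a combination that uses the crucial identity $\gamma^2 \|B^T u\|_2^2 - \frac{\gamma^2}{\lambda}\|u\|_2^2 = -\frac{\gamma^2}{\lambda}\langle u, Mu\rangle$ for $u = v_{k+1} - v^*$.

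The bookkeeping step that is the heart of the argument is then to recognize that the remaining $M$-weighted terms fit the polarization identity
\[
2\langle a, b\rangle_M - \|a\|_M^2 = \|b\|_M^2 - \|a - b\|_M^2,
\]
applied with $a = v_{k+1} - v^*$ and $b = v_k - v^*$ (which requires $M \succeq 0$, and this is exactly guaranteed by $\lambda \leqslant 1/\lambda_{\max}(BB^T)$). Dropping the nonpositive term $-\frac{\gamma^2}{\lambda}\|v_{k+1} - v_k\|_M^2$ gives
\[
E_{k+1} \leqslant \eta_1^2 \|x_k - x^*\|_2^2 + \tfrac{\gamma^2}{\lambda}\|v_k - v^*\|_M^2.
\]

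Finally, the spectral bound $\|v_k - v^*\|_M^2 \leqslant (1 - \lambda\rho_{\min}(BB^T))\|v_k - v^*\|_2^2$ (using $\rho_{\min}(BB^T) > 0$) converts this into $E_{k+1} \leqslant \eta E_k$ with $\eta = \max(\eta_1^2,\, 1 - \lambda\rho_{\min}(BB^T)) \in [0,1)$, and induction on $k$ finishes the proof. I expect the main obstacle to be the algebraic bookkeeping in combining the prox inequality with the $x$-expansion — specifically, choosing to scale the firm-nonexpansiveness bound by $2\gamma^2/\lambda$ so that the cross terms cancel cleanly and the $M$-weighted polarization identity becomes applicable; the rest is essentially automatic.
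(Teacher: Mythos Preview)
Your proposal is correct and follows essentially the same approach as the paper's proof: both expand $\|x_{k+1}-x^*\|_2^2$ from the fixed-point identity, apply firm nonexpansiveness of $\operatorname{prox}_{(\lambda/\gamma)g^*}$ to bound $\|v_{k+1}-v^*\|_2^2$, combine with the scaling $2\gamma^2/\lambda$ so the cross terms cancel, use the $M$-weighted polarization identity to obtain $\frac{\gamma^2}{\lambda}\|v_k-v^*\|_M^2 - \frac{\gamma^2}{\lambda}\|v_{k+1}-v_k\|_M^2$, drop the nonpositive term, and finish with the spectral bound on $M$ and induction. The only cosmetic difference is that the paper applies the $\eta_1$-contraction of $\phi$ at the end rather than immediately after expanding $\|x_{k+1}-x^*\|_2^2$.
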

\begin{proof}
See Appendix~\ref{Appendix_PDFP norm lemma}.

\end{proof}

\textbf{Remark}. If $f$ is $m_f$-strongly convex, then the condition that $\left\| \phi(x) - \phi(y) \right\|_2 \leqslant \eta_1 \left\| x - y \right\|_2 $, $\eta_1 < 1$ is easily satisfied:
\begin{equation}
\label{phi when f strongly convex}
    \begin{aligned}
    &  \|\phi(x) - \phi(y)\|_2^2 =  \left\|  x- y - \gamma \left( \nabla f(x) - \nabla f(y) \right) \right\|_2^2  \\
    & =   \left\|   x - y \right\|_2^2  + \gamma^2  \left\| \nabla f(x) - \nabla f(y) \right\|_2^2 - 2\gamma  \left\langle x - y, \nabla f(x) - \nabla f (y)  \right\rangle \\
    & \leqslant  \left\|   x - y \right\|_2^2 - \left(\dfrac{2\gamma}{M_2} - \gamma^2\right) \left\| \nabla f(x) - \nabla f (y) \right\|_2^2.
    \end{aligned}
\end{equation}
The  inequality follows from the fact that $f$ has $M_2$-Lipschitz gradient and lemma \ref{Lipschitz gradient lemma}. \\
Since $ 0 < \gamma < \dfrac{2}{M_2}$, we have $ \dfrac{2\gamma}{M_2} - \gamma^2  > 0$. From the assumption that $f$ is $m_f$-strongly convex and lemma \ref{strongly convex lemma1}, we have
\begin{equation}
    \begin{aligned}
    &m_f \| x - y  \|_2^2\leqslant  \langle x - y, \nabla f(x) - \nabla f(y) \rangle \leqslant \| x - y\|_2 \| \nabla f(x) - \nabla f(y)\|_2, \quad \forall x, y\in \mathbb{R}^d  \\
    &\Rightarrow m_f \| x - y  \|_2 \leqslant \| \nabla f(x) - \nabla f(y)\|_2, \quad \forall x, y\in \mathbb{R}^d.
    \end{aligned}
\end{equation}
Then from (\ref{phi when f strongly convex}),
\begin{equation}
    \|\phi(x) - \phi(y)\|_2^2 \leqslant \left(1 - m_f^2\left(\dfrac{2\gamma}{M_2} - \gamma^2\right) \right) \| x - y  \|_2^2.
\end{equation}
Therefore $\eta_1 = \sqrt{1 - m_f^2\left(\dfrac{2\gamma}{M_2} - \gamma^2\right)}$ and $\eta_1\in [0, 1)$ since $m_f \leqslant M_2$.

\subsection{K-step PDFP-based Langevin Algorithms}

This subsection discusses how to implement the PDFP based ULA and MALA to sample the distribution density (\ref{Moreau approximation 2}). 
The two algorithms are based on Algorithm \ref{proximalULA} and Algorithm \ref{proximal MALA}
 respectively. 
 Recall that in Algorithms \ref{proximalULA} and \ref{proximal MALA}, an optimization subproblem 
\begin{equation}
    \label{PDFP for prox}
    \operatorname{prox}_{\rho U} \left(\theta_{n}\right) = \arg\min_{x\in \mathbb{R}^d}\left( \dfrac{\| x - \theta_n \|^{2}}{2\rho} + f(x)  + g(Bx)  \right)
\end{equation}
 needs to be solved. The object function in Eq.~(\ref{PDFP for prox}) changes with respect to different $\theta_n$. We then apply the PDFP algorithm to Eq.~\eqref{PDFP for prox}, yielding the following iteration: 
\begin{equation}
\label{PDFP iteration of K-step}
    \left\{
    \begin{aligned}
        y_{n, k+1} &= x_{n, k} - \gamma\left( \nabla f(x_{n, k}) + \dfrac{1}{\rho}(x_{n, k} - \theta_n)\right)  -  \gamma B^T v_{n, k}           \\
        v_{n, k+1} &= \operatorname{prox}_{\frac{\lambda}{\gamma}g^*}\left( \dfrac{\lambda}{\gamma}By_{n, k+1}  + v_{n, k}  \right)  \\
        x_{n, k+1} &= x_{n, k} - \gamma\left( \nabla f(x_{n, k})+  \dfrac{1}{\rho}(x_{n, k} - \theta_n)\right)  -  \gamma B^T v_{n, k+1} .
    \end{aligned}
    \right.
\end{equation}
Inserting the PDFP iteration in Eqs.~\eqref{PDFP iteration of K-step} into Algorithms \ref{proximalULA} and \ref{proximal MALA}, 
yields Algorithms  \ref{ULA by PDFP} (ULA-PDFP) and \ref{MALA by PDFP} (MALA-PDFP) respectively. 

\begin{algorithm}

\KwResult{$ \{ \theta_{n} \}_{n = 1}^{N}  $.  }
    Set $ \rho > 0$, $\delta\in (0,\rho]$, 
    $\displaystyle 0 < \lambda \leqslant \dfrac{1}{\lambda_{\max}(BB^T)  } $, 
    $\displaystyle  0 < \gamma <  \dfrac{2}{M_2 + 1 / \rho} $,
      $\theta_0\in \mathbb{R}^d$.

    \For{$n = 0$ to $N-1$}{ 
        Initialization:  $  x_{n, 0} = \theta_n$, $ v_{n, 0} = 0 $.
        
        \For{$k = 0$ to $K-1$}{
        $y_{n, k+1} = x_{n, k} - \gamma\left( \nabla f(x_{n, k}) + \dfrac{1}{\rho}(x_{n, k} - \theta_n)\right)  -  \gamma B^T v_{n, k}$           \\
        $v_{n, k+1} = \operatorname{prox}_{\frac{\lambda}{\gamma}g^*}\left( \dfrac{\lambda}{\gamma}By_{n, k+1}  + v_{n, k}  \right)$  \\
        $x_{n, k+1} = x_{n, k} - \gamma\left( \nabla f(x_{n, k})+  \dfrac{1}{\rho}(x_{n, k} - \theta_n)\right)  -  \gamma B^T v_{n, k+1}$ 
        }
        $\theta_{n+1} = \left(1-\frac{\delta}{ \rho}\right) \theta_{n}+\frac{\delta}{\rho}x_{n, K} + \sqrt{2\delta} \xi_{n}, \quad \xi_{n} \sim \mathcal{N}(0, I)$ 
}
\caption{ULA-PDFP}\label{ULA by PDFP}
\end{algorithm}

It is natural to ask why we solve Eq.~\eqref{PDFP for prox} by PDFP, instead of other algorithms such as FISTA \cite{beck2009fast} and Chambolle-Pock (CP) \cite{chambolle2011first}. Firstly, FISTA cannot directly solve Eq.~\eqref{MAP} when $B$ is not an identity matrix and solving Eq.~\eqref{PDFP for prox} by FISTA requires a two-layer subproblem. Secondly, solving Eq.~\eqref{PDFP for prox} by CP requires an additional conjugate-gradient algorithm even for $K=1$, which is inefficient when function $f$ includes a non-trivial forward operator. When $f$ is zero and the Moreau envelope is applied merely on $g$, this is what actually MYULA \cite{durmus2018efficient} is doing and therefore CP can solve Eq.~\eqref{PDFP for prox} with the conjugate-gradient algorithm analytically solved. See more details of the experiments between ULA-PDFP and MYULA-CP in Section \ref{sec:examples}.

\begin{algorithm}

\KwResult{$ \{ \theta_{n} \}_{n = 1}^{N}  $. }
    Set $ \rho > 0$, $\delta \in (0, \rho]$, 
    $\displaystyle 0 < \lambda \leqslant \dfrac{1}{\lambda_{\max}(BB^T)  } $, 
    $\displaystyle 0 < \gamma < \dfrac{2}{M_2 + 1 / \rho} $, 
        $P_0 = \theta_0\in \mathbb{R}^d$.
    
    \For{$n = 0$ to $N-1$}{
        Propose a new state:  
        $\displaystyle Y_{n+1}= \left(1-\frac{\delta}{ \rho}\right) \theta_{n}+\frac{\delta}{\rho}P_n +\sqrt{2\delta} \xi_{n}, \quad \xi_{n} \sim \mathcal{N}(0, I)$ \\ ~ \\
        Initialization: $  x_{n, 0} = Y_{n+1} $, $ v_{n, 0} = 0  $
        
        \For{$k = 0$ to $K-1$}{
        $y_{n, k+1} = x_{n, k} - \gamma\left( \nabla f(x_{n, k}) + \dfrac{1}{\rho}(x_{n, k} - Y_{n+1}) \right)  -  \gamma B^T v_{n, k}$           \\
        $v_{n, k+1} = \operatorname{prox}_{\frac{\lambda}{\gamma}g^*}\left( \dfrac{\lambda}{\gamma}By_{n, k+1}  + v_{n, k}  \right)$  \\
        $x_{n, k+1} = x_{n, k} - \gamma\left( \nabla f(x_{n, k})+  \dfrac{1}{\rho}(x_{n, k} - Y_{n+1}) \right)  -  \gamma B^T v_{n, k+1}$ 
        }
        $P_{\text{tmp}} = x_{n, K} $\\ 
        Compute acceptance rate:
        $\displaystyle A(Y_{n+1}, \theta_{n}) = 
        \min\left(1, \dfrac{\pi(Y_{n+1})}{\pi(\theta_{n})}\cdot \dfrac{p(\theta_{n}| Y_{n+1})}{p(Y_{n+1}|\theta_{n}) } \right)$\\
        $\displaystyle 
        = \min\left(1, \dfrac{\pi(Y_{n+1})}{\pi(\theta_{n})}\cdot  
        \dfrac{  \exp\left( - \dfrac{1}{4\delta} \left\|  \theta_{n} -  \left(1-\frac{\delta}{ \rho}\right) Y_{n+1} - \frac{\delta}{\rho}P_{\text{tmp}} \right\|_2^2 \right) }
        {  \exp\left( - \dfrac{1}{4\delta} \left\|  Y_{n + 1} - \left(1-\frac{\delta}{ \rho}\right) \theta_{n} - \frac{\delta}{\rho}P_n \right\|_2^2 \right) }  
        \right)  $
        \\
        Sample $a$ from uniform distribution: $a\sim U[0, 1]$.
        \\
        \eIf{$a < A(Y_{n+1}, \theta_{n})$}{
        Accept $Y_{n+1}$: $\theta_{n+1} = Y_{n+1}$, $P_{n+1} = P_{\text{tmp}}$ 
        }{
        Reject $Y_{n+1}$: $\theta_{n+1} = \theta_{n}$, $P_{n+1} = P_n$
    } 
}
\caption{MALA-PDFP}\label{MALA by PDFP}
\end{algorithm}

Note here that an important feature of the proposed algorithms are that they only conduct a fixed number (i.e., $K$) of PDFP iterations, 
a key difference from the existing algorithms that requires to solve the proximal subproblem  $ \operatorname{prox}_{\rho U} \left(\theta_{n}\right) $ accurately.  
Consequently $x_{n, K}$ is only an approximation of $ \operatorname{prox}_{\rho U} \left(\theta_{n}\right) $ and  Algorithm \ref{ULA by PDFP} is actually an ULA with inaccurate gradient. 
The motivation for doing this is to reduce the computational cost -- as one can see 
each iteration needs to evaluate $\nabla f(x)$, and so the computational cost for computing $ \operatorname{prox}_{\rho U} \left(\theta_{n}\right) $
may be exceedingly high, 
especially when evaluating $\nabla f(x)$ itself is time-consuming. 
In this case, using a small number of iterations (i.e. small value of $K$) may effectively reduce the computational cost. 
Since the approximation is used, the resulting sampling error in Algorithm~\ref{ULA by PDFP} must be analyzed (note that the approximation 
does not introduce sampling error in Algorithm~\ref{MALA by PDFP} thanks to the Metropolis step). 

It should be noted that, in the  iteration in  Algorithms \ref{ULA by PDFP} and \ref{MALA by PDFP} we initialize the dual variable $v_{n, 0} = 0$ instead of  $v_{n, 0} =   v_{n-1, K}$, different from the optimization algorithm. 
The reason is that the Langevin algorithms are expected to generate a Markov Chain $\{\theta_n\}$, which means that the $(n+1)$-th state $\theta_{n+1}$ only depends on the $n$-th state $\theta_n$ and transition probability $P(\theta_{n+1}| \theta_n)$. Once the dual variable $v_{n, 0}$ is initialized as $v_{n-1, K}$, it actually involves the information in the $(n-1)$-th state and the transition probability hence becomes $P(\theta_{n+1}| \theta_n, \theta_{n-1}  )$, violating the Markov property of sequence $\{ \theta_n \}$. 

 Recall that, if $ \operatorname{prox}_{\rho U} \left(\theta_{n}\right) $ is accurately evaluated, then from \cite{dalalyan2017theoretical, durmus2017nonasymptotic, durmus2019analysis} one directly has the convergence and the upper bound on the sampling error of  Algorithm \ref{proximalULA}. 
As has been mentioned, Algorithm \ref{ULA by PDFP} is actually an ULA with inaccurate gradient and so its convergence property needs to be studied. \cite{dalalyan2019user} considers both deterministic and stochastic approximations of the gradient of the log-density and quantifies the impact of the gradient evaluation inaccuracies. In Algorithm \ref{ULA by PDFP}  one intuitively has better upper bound on the sampling error for larger $K$, but at more computational cost. The detailed error analysis is presented in Section~\ref{sec:convergence}.
We also want to mention that, our numerical experiments illustrate that the PDFP based algorithms with small $K$
can produce sufficiently accurate samples, with more details in Section~\ref{sec:examples}.

\section{Convergence results}\label{sec:convergence}

In this section we present the convergence analysis of ULA with $K$-step PDFP (Algorithm \ref{ULA by PDFP}). Most of our proofs follow from \cite{dalalyan2017theoretical}. 
To start with, we first give a lemma which specifies the strongly convexity of the Moreau envelope of a given strongly convex function. 

\begin{lemma}
\label{strongly convex Moreau envelope lemma}
Let $m, \rho \in \mathbb{R}, m > 0, \rho > 0$. If function $h(x)$ is $m$-strongly convex, then the $\rho$-Moreau envelope of $h(x)$,
\begin{equation}
    h_\rho(x) = \min_y\left( h(y) + \dfrac{\left\| y - x \right\|_2^2}{2\rho}  \right),
\end{equation}
is $\dfrac{m}{1+\rho m}$-strongly convex.
\end{lemma}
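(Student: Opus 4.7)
The plan is to establish $\frac{m}{1+\rho m}$-strong convexity of $h_\rho$ through the equivalent gradient-monotonicity characterization of Lemma~\ref{strongly convex lemma1}, namely
\[
\langle x - y, \nabla h_\rho(x) - \nabla h_\rho(y) \rangle \geqslant \frac{m}{1+\rho m}\|x-y\|_2^2, \quad \forall x,y\in\mathbb{R}^d.
\]
Since $h_\rho$ is convex and continuously differentiable by Lemma~\ref{Moreau lemma}(2), this monotonicity inequality is equivalent to $\frac{m}{1+\rho m}$-strong convexity (apply Lemma~\ref{strongly convex lemma} combined with the standard integration argument, or note that both characterizations reduce to the same thing for convex $C^1$ functions). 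Writing $p(x) := \operatorname{prox}_{\rho h}(x)$, Lemma~\ref{Moreau lemma}(4) gives $\nabla h_\rho(x) = (x - p(x))/\rho$, so the left-hand side above equals $\rho^{-1}\bigl[\|x-y\|_2^2 - \langle p(x) - p(y), x - y\rangle\bigr]$, and the proof reduces to showing
\[
\langle p(x) - p(y), x - y\rangle \leqslant \frac{1}{1+\rho m}\|x-y\|_2^2.
\]

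The key step exploits the optimality condition for the proximity operator: $(x - p(x))/\rho = \nabla h(p(x))$, and likewise for $y$. Splitting $x - y = \bigl(p(x) - p(y)\bigr) + \bigl((x - p(x)) - (y - p(y))\bigr)$ and taking the inner product with $p(x) - p(y)$, the second piece becomes $\rho \langle p(x) - p(y), \nabla h(p(x)) - \nabla h(p(y))\rangle$, which is at least $\rho m \|p(x) - p(y)\|_2^2$ by Lemma~\ref{strongly convex lemma1} applied to $h$. Hence
\[
\langle p(x) - p(y), x - y\rangle \geqslant (1+\rho m)\|p(x) - p(y)\|_2^2.
\]
Cauchy--Schwarz then gives $\|p(x) - p(y)\|_2 \leqslant \|x-y\|_2/(1+\rho m)$, and a second application of Cauchy--Schwarz yields $\langle p(x) - p(y), x-y\rangle \leqslant \|x-y\|_2^2/(1+\rho m)$, which is precisely what was needed.

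Substituting back produces $\langle x - y, \nabla h_\rho(x) - \nabla h_\rho(y)\rangle \geqslant \rho^{-1}\bigl(1 - (1+\rho m)^{-1}\bigr)\|x-y\|_2^2 = \frac{m}{1+\rho m}\|x-y\|_2^2$, finishing the proof. There is no real obstacle; the only mild subtlety is that $h$ is not assumed differentiable, so the $\nabla h$ appearing via the optimality condition must be interpreted as a specific subgradient in $\partial h(p(x))$, and the strong-convexity inequality at $p(x), p(y)$ should be read in its subgradient form. The same chain of inequalities then goes through without modification.
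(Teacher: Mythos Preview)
Your argument is correct, but it follows a genuinely different route from the paper's proof. The paper proceeds purely algebraically: it writes $h(y) = p(y) + \tfrac{m}{2}\|y\|_2^2$ with $p$ convex (Lemma~\ref{strongly convex lemma}), completes the square in $y$ inside the Moreau envelope, and after the change of variable $y = z/(1+\rho m)$ obtains the exact identity
\[
h_\rho(x) = q_{\rho(1+\rho m)}(x) + \frac{m}{2(1+\rho m)}\|x\|_2^2,
\]
where $q(z) = p(z/(1+\rho m))$ is convex. Since a Moreau envelope of a convex function is convex, Lemma~\ref{strongly convex lemma} immediately gives the result. No gradients or subgradients are touched at all.

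Your approach instead verifies the gradient-monotonicity inequality directly, exploiting the optimality condition $(x - p(x))/\rho \in \partial h(p(x))$ and the strong monotonicity of $\partial h$. What it buys is an explicit contraction estimate $\|\operatorname{prox}_{\rho h}(x) - \operatorname{prox}_{\rho h}(y)\|_2 \leqslant (1+\rho m)^{-1}\|x-y\|_2$ as a byproduct, which is itself useful. What the paper's route buys is that it never needs differentiability of $h$, the converse of Lemma~\ref{strongly convex lemma1}, or the subgradient interpretation you flag at the end; everything is reduced to the quadratic characterization of strong convexity and the convexity of Moreau envelopes. Both arguments are of comparable length and rigor.
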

\begin{proof}
Define $p(x):=h(x) - \dfrac{m}{2}\left\| x \right\|_2^2$. Then from Lemma \ref{strongly convex lemma}, $p(\cdot)$ is convex. By the definition,
\begin{equation}
    \begin{aligned}
    h_\rho(x) &= \min_y\left( h(y) + \dfrac{\left\| y - x \right\|_2^2}{2\rho}  \right) \\
    & = \min_y\left( h(y) - \dfrac{m}{2}\left\| y \right\|_2^2 + \dfrac{m}{2}\left\| y \right\|_2^2 + \dfrac{\left\| y - x \right\|_2^2}{2\rho}  \right) \\
    & = \min_y\left( h(y) - \dfrac{m}{2}\left\| y \right\|_2^2  + \dfrac{(1 + \rho m)}{2\rho}\left\| y - \frac{x}{1 + \rho m} \right\|_2^2 + \dfrac{m}{2(1+\rho m)}\left\| x\right\|_2^2  \right) \\
    & = \min_y\left( p(y)  + \dfrac{(1 + \rho m)}{2\rho}\left\| y - \frac{x}{1 + \rho m} \right\|_2^2 \right) + \dfrac{m}{2(1+\rho m)}\left\| x\right\|_2^2.
    \end{aligned}
\end{equation}
Define $q(z) := p\left(\dfrac{z}{1+\rho m}\right) $. Then function $q(\cdot)$ is convex. \\
From the exchange of variables $y = \dfrac{z}{1+\rho m}$,
\begin{equation}
    \begin{aligned}
    h_\rho(x) &= \min_z\left( q(z)  + \dfrac{1}{2\rho(1+\rho m)}\left\| z - x \right\|_2^2 \right) + \dfrac{m}{2(1+\rho m)}\left\| x\right\|_2^2   \\
    & = q_{\rho(1+\rho m)}(x) + \dfrac{m}{2(1+\rho m)}\left\| x\right\|_2^2.
    \end{aligned}
\end{equation}
$  h_\rho(x) - \dfrac{m}{2(1+\rho m)}\left\| x\right\|_2^2 = q_{\rho(1 + \rho m)}(x) $ is the Moreau envelope of $q$, hence convex.\\
From Lemma \ref{strongly convex lemma}, $h_\rho(x)$ is $\dfrac{m}{1+\rho m}$-strongly convex.
\\
\end{proof}

From Lemma \ref{Moreau lemma}, one can see that  when $\rho \rightarrow 0$, $ h_\rho(x) \rightarrow h(x) $ pointwisely and this result is consistent with $\frac{m}{1 + \rho m} \rightarrow m$. When $\rho \rightarrow +\infty$, $ h_\rho(x) $ tends to a constant function and $\frac{m}{1+\rho m} \rightarrow 0$. In the later convergence analysis of Algorithm \ref{ULA by PDFP} when we  require the strongly convexity of $U_\rho(x)$,   the strong convexity of $U(x)$ is sufficient.

For the study of ULA with inaccurate gradient of log-density, \cite{dalalyan2019user} gives an upper bound of the sampling error when the inaccuracies of the gradients have bounded expectations and variances, with the   assumption that $U$ is strongly convex. Actually the convergence of PDFP (Algorithm \ref{PDFP for optimization}) and convergence of ULA with accurate gradients do not require the strongly convexity of $U$. To prove the boundness of the samples generated by Algorithm \ref{ULA by PDFP}, we  need  the same assumption that $U$ is strongly convex. In this case, we assume that $f$ is $m$-strongly convex and therefore $U_\rho$ is $\frac{m}{1+\rho m}$-strongly convex from Lemma \ref{strongly convex Moreau envelope lemma}.

Another assumption we make is the boundness of $ \left\|\operatorname{prox}_{\frac{\lambda}{\gamma}g^*}(v)\right\|_2  $. This is true when $g$ is the $L^1$ norm and  $g^*$ is an indicator function of a  bounded convex set. 

Since the PDFP iteration and the optimal primal and dual solution of problem (\ref{PDFP for prox}) change with different $\theta_n$, we simplify the notation by denoting the PDFP iteration of problem (\ref{PDFP for prox}) in Algorithm \ref{ULA by PDFP} as 
\begin{equation}
\label{definition of Tn}
\begin{aligned}
&v_{n, k+1} = T_{n, 1}(v_{n, k}, x_{n, k}),\quad x_{n, k+1} = T_{n, 2}(v_{n, k}, x_{n, k}) \\
&\Rightarrow (v_{n, k+1}, x_{n, k+1}) = T_n(v_{n, k}, v_{n, k}) := \left(T_{n, 1}(v_{n, k}, x_{n, k}), T_{n, 2}(v_{n, k}, x_{n, k})\right).
\end{aligned}
\end{equation}
From this notation, the iteration (\ref{PDFP iteration of K-step}) and Algorithm \ref{ULA by PDFP} turns into
\begin{equation}
\label{ULA by PDFP simplified notation}
\left\{
\begin{aligned}
& x_{n, 0} = \theta_n, \quad v_{n, 0} = 0  \\
&v_{n, K} = T_{n, 1}T_{n}^{K - 1}\left( v_{n, 0}, x_{n, 0} \right)  \\
&x_{n, K} = T_{n, 2}T_{n}^{K - 1}\left( v_{n, 0}, x_{n, 0} \right)  \\
&\theta_{n+1} = \theta_{n} - \frac{\delta}{\rho}\left(\theta_n -  x_{n, K}\right) + \sqrt{2\delta} \xi_{n}, \quad \xi_{n} \sim \mathcal{N}(0, I)\\ 
\end{aligned}
\right.
\end{equation}
With a $K$-step PDFP iteration, Algorithm \ref{ULA by PDFP} and (\ref{ULA by PDFP simplified notation}) evaluate the gradient $\nabla U_\rho(\theta_n)$ by the approximation $\dfrac{  \theta_n - T_{n, 2}T_{n}^{K - 1}\left( 0, \theta_{n} \right) }{\rho}$, leading to the error
\begin{equation}
    \nabla U_\rho(\theta_n) - \dfrac{ \theta_n -  T_{n, 2}T_{n}^{K - 1}\left( 0, \theta_{n} \right)}{\rho} = \dfrac{  T_{n, 2}T_{n}^{K-1}\left(0, \theta_n\right) - \operatorname{prox}_{\rho U}(\theta_n)}{\rho}.
\end{equation}
Since the function $ \dfrac{\| x - \theta_n \|^{2}}{2\rho} + f(x) $ is always strongly convex even if $f$ is not strongly convex, we then give a lemma which quantifies the error of $K$-step PDFP in Algorithm \ref{ULA by PDFP}:
\begin{lemma}
\label{subproblem x norm lemma}
Assume that $\{\theta_n\}_n$  is the sequence generated by Algorithm \ref{ULA by PDFP}.   Assume that $  \rho, K, \lambda, \gamma $ are the parameters in Algorithm \ref{ULA by PDFP}. Let $m \geqslant0$, $m\in \mathbb{R}$ . Assume that $f$ is $m$-strongly convex and $\rho_{\min}(BB^T) > 0$. If $ g $ is a function such that, $\forall v\in V$, $  \left\|\operatorname{prox}_{\frac{\lambda}{\gamma}g^*}(v)\right\|_2 \leqslant C$, then $\forall n\in \mathbb{N}$, 
\begin{equation}
    \begin{aligned} 
     \left\| \dfrac{  T_{n, 2}T_{n}^{K-1}\left(0, \theta_n\right) - \operatorname{prox}_{\rho U}(\theta_n)}{\rho} \right\|_2^2 \leqslant \eta^K \left( \left\| \nabla U_\rho\left( \theta_n \right) \right\|_2^2 + \dfrac{\gamma^2C^2}{\lambda\rho^2} \right),
    \end{aligned}
\end{equation}
where 
\begin{equation}
    \eta = \max\left( 1 -\left(m+ \dfrac{1}{\rho}\right)^2\left( \dfrac{2\gamma}{M_2 + \frac{1}{\rho}} - \gamma^2 \right), 1 - \lambda\rho_{\min} (BB^T) \right).
\end{equation}
\end{lemma}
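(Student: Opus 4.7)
The plan is to reduce the claim to a direct application of Lemma \ref{PDFP norm lemma} applied to the inner subproblem that defines $\operatorname{prox}_{\rho U}(\theta_n)$. Observe that problem (\ref{PDFP for prox}) is a PDFP problem of the form $\min_x \tilde f_n(x) + g(Bx)$, where $\tilde f_n(x) := f(x) + \frac{1}{2\rho}\|x-\theta_n\|_2^2$. The quadratic penalty is $\frac{1}{\rho}$-strongly convex with $\frac{1}{\rho}$-Lipschitz gradient, so adding it to $f$ gives that $\tilde f_n$ is $(m+\tfrac{1}{\rho})$-strongly convex with $(M_2+\tfrac{1}{\rho})$-Lipschitz gradient (this is where the hypothesis $m\ge 0$ is enough, since the $1/\rho$ contribution alone makes $\tilde f_n$ strongly convex). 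The PDFP iteration (\ref{PDFP iteration of K-step}) is then precisely Algorithm \ref{PDFP for optimization} run on $(\tilde f_n,g,B)$, and the stepsize condition $0<\gamma<\frac{2}{M_2+1/\rho}$ in Algorithm \ref{ULA by PDFP} is exactly the admissibility condition for PDFP applied to $\tilde f_n$.

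Next I would invoke the remark after Lemma \ref{PDFP norm lemma} with strong convexity parameter $m+\tfrac{1}{\rho}$ and Lipschitz constant $M_2+\tfrac{1}{\rho}$, yielding
\begin{equation*}
\eta_1^2 \;=\; 1 - \Bigl(m+\tfrac{1}{\rho}\Bigr)^{\!2}\Bigl(\tfrac{2\gamma}{M_2+1/\rho} - \gamma^2\Bigr),
\end{equation*}
which combined with $1-\lambda\rho_{\min}(BB^T)$ produces the $\eta$ stated in the lemma. Writing $(v_n^*, x_n^*)$ for the fixed point of $T_n$, so that $x_n^* = \operatorname{prox}_{\rho U}(\theta_n)$ by Lemma \ref{PDFP fixed point lemma}, Lemma \ref{PDFP norm lemma} gives
\begin{equation*}
\|x_{n,K}-x_n^*\|_2^2 + \tfrac{\gamma^2}{\lambda}\|v_{n,K}-v_n^*\|_2^2 \;\le\; \eta^K\Bigl(\|x_{n,0}-x_n^*\|_2^2 + \tfrac{\gamma^2}{\lambda}\|v_{n,0}-v_n^*\|_2^2\Bigr).
\end{equation*}

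It remains to bound the two initialization errors. With $x_{n,0}=\theta_n$, Lemma \ref{Moreau lemma}(4) gives $\|x_{n,0}-x_n^*\|_2 = \|\theta_n - \operatorname{prox}_{\rho U}(\theta_n)\|_2 = \rho\|\nabla U_\rho(\theta_n)\|_2$. With $v_{n,0}=0$, the dual error is $\|v_n^*\|_2$; by the first identity of Lemma \ref{PDFP fixed point lemma}, $v_n^* = \operatorname{prox}_{\frac{\lambda}{\gamma}g^*}(\,\cdot\,)$, so the standing hypothesis $\|\operatorname{prox}_{\frac{\lambda}{\gamma}g^*}(v)\|_2\le C$ gives $\|v_n^*\|_2\le C$. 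Dropping the nonnegative dual term on the left, substituting these two bounds on the right, and dividing through by $\rho^2$ yields the claimed inequality. The only real obstacle is conceptual rather than computational: one must recognize that the quadratic coupling term $\frac{1}{2\rho}\|x-\theta_n\|_2^2$ absorbs into the $f$-slot of the PDFP analysis, which both supplies the needed strong convexity (even when $m=0$) and shifts the Lipschitz/strong-convexity constants in exactly the way that produces the $\eta$ in the statement.
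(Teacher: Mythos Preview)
Your proposal is correct and follows essentially the same approach as the paper: absorb the quadratic term into the smooth part to obtain a $(m+\tfrac{1}{\rho})$-strongly convex, $(M_2+\tfrac{1}{\rho})$-Lipschitz-gradient function, apply Lemma~\ref{PDFP norm lemma} (via the remark following it) to the subproblem, then use the initializations $x_{n,0}=\theta_n$, $v_{n,0}=0$ together with Lemma~\ref{Moreau lemma}(4) and the fixed-point characterization of $v_n^*$ to bound the right-hand side and divide by $\rho^2$. The paper's proof is identical in structure and detail.
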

\begin{proof}
See Appendix~\ref{Appendix_subproblem x norm lemma}.

\end{proof} 

To obtain the convergence analysis of Algorithm \ref{ULA by PDFP}, we use the same proof technique in \cite{dalalyan2017theoretical, durmus2019analysis} to first obtain some upper bound of 
$\mathbb{E}\left( U_\rho(\theta_{n+1}) - U_\rho(\theta_n) \right)$. To be more specific, we respectively give the bound of $\mathbb{E}\left( U_\rho\left(  \theta_{n} - \frac{\delta}{\rho}\left(\theta_n -  x_{n, K}\right) + \sqrt{2\delta} \xi_{n}  \right) - U_\rho\left(  \theta_{n} - \frac{\delta}{\rho}\left(\theta_n -  x_{n, K}\right)  \right) \right)$ and 
$\mathbb{E} \left( U_\rho( \theta_{n} - \frac{\delta}{\rho}\left(\theta_n -  x_{n, K}\right)  - U_\rho(\theta_n)  \right)  $, which both simply make use of the Lipschitz gradient of $U_\rho$. Those are explained by Lemma \ref{U_rho lemma 1} and Lemma \ref{U_rho lemma 2}.
\begin{lemma}
\label{U_rho lemma 1}
$\forall x\in \mathbb{R}^d$, if $\xi \sim \mathcal{N}(0, I)$ is independent of $x$, then
\begin{equation}
    \mathbb{E}\left( U_\rho(x + \sqrt{2\delta}\xi) - U_\rho(x) \right)\leqslant \dfrac{\delta d}{\rho}.
\end{equation}
\end{lemma}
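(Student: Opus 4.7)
The plan is to exploit the $\frac{1}{\rho}$-Lipschitz continuity of $\nabla U_\rho$, which was established in Lemma~\ref{Moreau lemma}(2), and combine it with the moment properties of a standard Gaussian vector.

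First I would invoke Lemma~\ref{Lipschitz gradient lemma} with $M = 1/\rho$ applied to $U_\rho$, which yields the quadratic upper bound
\begin{equation*}
U_\rho(y) \leqslant U_\rho(x) + \langle \nabla U_\rho(x), y - x\rangle + \dfrac{1}{2\rho}\|y - x\|_2^2 \qquad \forall x, y \in \mathbb{R}^d.
\end{equation*}
Then I would substitute $y = x + \sqrt{2\delta}\,\xi$ into this inequality to obtain
\begin{equation*}
U_\rho(x + \sqrt{2\delta}\,\xi) - U_\rho(x) \leqslant \sqrt{2\delta}\,\langle \nabla U_\rho(x), \xi\rangle + \dfrac{\delta}{\rho}\|\xi\|_2^2.
\end{equation*}

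Next I would take expectations term by term. Because $\xi \sim \mathcal{N}(0, I)$ is independent of $x$, the tower property (or direct linearity) gives $\mathbb{E}\langle \nabla U_\rho(x), \xi\rangle = \langle \mathbb{E}\nabla U_\rho(x), \mathbb{E}\xi\rangle = 0$, eliminating the cross term. The remaining term yields $\mathbb{E}\|\xi\|_2^2 = d$, which is the standard trace identity for a standard Gaussian in $\mathbb{R}^d$. Combining these two observations produces the claimed bound $\mathbb{E}(U_\rho(x+\sqrt{2\delta}\,\xi) - U_\rho(x)) \leqslant \delta d / \rho$.

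There is no real obstacle here; the only subtlety is the independence assumption, which is needed precisely to kill the cross term in expectation (without independence, $\mathbb{E}\langle \nabla U_\rho(x),\xi\rangle$ need not vanish even though $\mathbb{E}\xi = 0$). This lemma serves as the ``Gaussian noise contribution'' estimate that will be paired with the companion Lemma~\ref{U_rho lemma 2} (controlling the drift contribution) to produce the per-iteration descent-type bound on $\mathbb{E}(U_\rho(\theta_{n+1}) - U_\rho(\theta_n))$ used in the convergence analysis of Algorithm~\ref{ULA by PDFP}.
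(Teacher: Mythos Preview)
Your proof is correct and follows essentially the same approach as the paper: invoke the $\tfrac{1}{\rho}$-Lipschitz gradient of $U_\rho$ from Lemma~\ref{Moreau lemma}(2), apply the quadratic upper bound of Lemma~\ref{Lipschitz gradient lemma}, take expectations, use independence to kill the cross term, and use $\mathbb{E}\|\xi\|_2^2 = d$ for the remaining term. There is nothing to add.
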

\begin{proof}
From Lemma \ref{Moreau lemma} (2), $U_\rho$ has  $\frac{1}{\rho}$-Lipschitz gradient, then by Lemma \ref{Lipschitz gradient lemma},
\begin{equation}
    \begin{aligned}
    U_\rho(x + \sqrt{2\delta}\xi) - U_\rho(x)\leqslant \left\langle \nabla U_\rho(x), \sqrt{2\delta}\xi \right\rangle + \dfrac{1}{2\rho}\left\| \sqrt{2\delta}\xi \right\|_2^2.
    \end{aligned}
\end{equation}
From the assumption that  $\xi \sim \mathcal{N}(0, I)$ is independent of $x$, we have
$
    \mathbb{E}\left\langle \nabla U_\rho(x), \sqrt{2\delta}\xi \right\rangle = 0.
$
\\
Then 
\begin{equation}
    \mathbb{E}\left(U_\rho(x + \sqrt{2\delta}\xi) - U_\rho(x)\right)\leqslant   \dfrac{1}{2\rho}\mathbb{E}\left\| \sqrt{2\delta}\xi \right\|_2^2 = \dfrac{\delta d}{\rho}.
\end{equation}
\end{proof}

\begin{lemma}
\label{U_rho lemma 2}
$\forall x \in \mathbb{R}^d$, $\forall v\in V$,  
\begin{equation}
\small
    \begin{aligned}
    &   U_\rho\left(x - \dfrac{\delta}{\rho}\left(x - T_{n, 2}T_{n}^{K-1}\left(v, x\right)\right)\right) - U_\rho\left(x\right)    \leqslant  -\delta\left(1 - \dfrac{\delta}{2\rho}\right)\left\| \nabla U_\rho(x)\right\|_2^2 \\ 
    &+ \dfrac{\delta^2}{2\rho}\left\| \dfrac{T_{n, 2}T_{n}^{K-1}\left(v, x\right) - \operatorname{prox}_{\rho U}(x)}{\rho} \right\|_2^2 + \delta\left( 1 - \dfrac{\delta}{\rho} \right)\left\langle \nabla U_\rho(x), \dfrac{T_{n, 2}T_{n}^{K-1}\left(v, x\right) - \operatorname{prox}_{\rho U}(x)}{\rho} \right\rangle.
    \end{aligned}
\end{equation}
\end{lemma}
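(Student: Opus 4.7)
The plan is to apply the descent lemma for functions with Lipschitz gradient (Lemma \ref{Lipschitz gradient lemma}) to $U_\rho$, after rewriting the update direction in terms of $\nabla U_\rho(x)$ and the PDFP approximation error. From Lemma \ref{Moreau lemma} (2), $U_\rho$ has $\tfrac{1}{\rho}$-Lipschitz gradient, which gives the desired quadratic upper bound; from Lemma \ref{Moreau lemma} (4), $\nabla U_\rho(x) = \tfrac{x - \operatorname{prox}_{\rho U}(x)}{\rho}$, which is the identity that links the step direction to the gradient.

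First I would introduce the shorthand $z := T_{n,2}T_n^{K-1}(v,x)$ and $e := \tfrac{z - \operatorname{prox}_{\rho U}(x)}{\rho}$, so that the displacement becomes
\begin{equation*}
-\dfrac{\delta}{\rho}(x - z) \;=\; -\dfrac{\delta}{\rho}\bigl(x - \operatorname{prox}_{\rho U}(x)\bigr) + \dfrac{\delta}{\rho}\bigl(z - \operatorname{prox}_{\rho U}(x)\bigr) \;=\; -\delta\,\nabla U_\rho(x) + \delta\,e,
\end{equation*}
using Lemma \ref{Moreau lemma} (4) in the second equality.

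Next I would apply Lemma \ref{Lipschitz gradient lemma} to $U_\rho$ (with $M=1/\rho$) at the points $x$ and $x' := x -\tfrac{\delta}{\rho}(x-z)$, obtaining
\begin{equation*}
U_\rho(x') - U_\rho(x) \;\leqslant\; \bigl\langle \nabla U_\rho(x),\, x' - x\bigr\rangle + \dfrac{1}{2\rho}\,\|x' - x\|_2^2.
\end{equation*}
Substituting $x' - x = -\delta\,\nabla U_\rho(x) + \delta\,e$ and expanding both the inner product and the squared norm then yields
\begin{equation*}
U_\rho(x') - U_\rho(x) \;\leqslant\; -\delta\Bigl(1 - \dfrac{\delta}{2\rho}\Bigr)\|\nabla U_\rho(x)\|_2^2 + \delta\Bigl(1 - \dfrac{\delta}{\rho}\Bigr)\langle \nabla U_\rho(x), e\rangle + \dfrac{\delta^2}{2\rho}\|e\|_2^2,
\end{equation*}
which is exactly the claimed inequality once $e$ is written back in terms of $T_{n,2}T_n^{K-1}(v,x)$ and $\operatorname{prox}_{\rho U}(x)$.

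There is no real obstacle here: the statement is a one-step consequence of the descent lemma combined with the algebraic decomposition of the update direction enabled by the identity $\nabla U_\rho(x) = \tfrac{x - \operatorname{prox}_{\rho U}(x)}{\rho}$. The only care needed is in cleanly separating the ``ideal'' descent direction $-\delta \nabla U_\rho(x)$ from the perturbation $\delta e$ introduced by terminating PDFP after $K$ iterations, so that the cross terms combine into the coefficient $\delta(1 - \delta/\rho)$ that appears in the statement.
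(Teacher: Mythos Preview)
Your proposal is correct and follows essentially the same approach as the paper: both apply the descent lemma (Lemma \ref{Lipschitz gradient lemma}) using the $\tfrac{1}{\rho}$-Lipschitz gradient of $U_\rho$ from Lemma \ref{Moreau lemma} (2), and both decompose the step direction via the identity $\nabla U_\rho(x) = \tfrac{x - \operatorname{prox}_{\rho U}(x)}{\rho}$ from Lemma \ref{Moreau lemma} (4). The only cosmetic difference is that you decompose the displacement before invoking the descent lemma, whereas the paper applies the descent lemma first and then substitutes the decomposition; the algebra is identical.
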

\begin{proof}
From Lemma \ref{Moreau lemma} (2), $U_\rho$ has  $\frac{1}{\rho}$-Lipschitz gradient, then by Lemma \ref{Lipschitz gradient lemma},
\begin{equation}
    \begin{aligned}
        &  U_\rho\left(x - \dfrac{\delta}{\rho}\left(x - T_{n, 2}T_{n}^{K-1}\left(v, x\right)\right)\right) - U_\rho\left(x\right) \\
        & \leqslant - \delta \left\langle \nabla U_\rho(x),  \dfrac{x - T_{n, 2}T_n^{K - 1}(v, x)}{\rho}   \right\rangle + \dfrac{\delta^2}{2\rho} \left\| \dfrac{x - T_{n, 2}T_n^{K - 1}(v, x)}{\rho} \right\|_2^2 \\
        & = - \delta \left\langle \nabla U_\rho(x),  \nabla U_\rho(x) - \dfrac{T_{n, 2}T_{n}^{K-1}\left(v, x\right) - \operatorname{prox}_{\rho U}(x)}{\rho}   \right\rangle \\
        & \quad\quad + \dfrac{\delta^2}{2\rho} \left\| \nabla U_\rho(x) -  \dfrac{T_{n, 2}T_{n}^{K-1}\left(v, x\right) - \operatorname{prox}_{\rho U}(x)}{\rho} \right\|_2^2 \\
        & = -\delta\left(1 - \dfrac{\delta}{2\rho}\right)\left\| \nabla U_\rho(x)\right\|_2^2 + \dfrac{\delta^2}{2\rho}\left\| \dfrac{T_{n, 2}T_{n}^{K-1}\left(v, x\right) - \operatorname{prox}_{\rho U}(x)}{\rho} \right\|_2^2 \\
        & \quad\quad + \delta\left( 1 - \dfrac{\delta}{\rho} \right)\left\langle \nabla U_\rho(x), \dfrac{T_{n, 2}T_{n}^{K-1}\left(v, x\right) - \operatorname{prox}_{\rho U}(x)}{\rho} \right\rangle.
    \end{aligned}
\end{equation}
\end{proof}

From Lemma \ref{U_rho lemma 1} and Lemma \ref{U_rho lemma 2} we can deduce the following lemma showing that the upper bound of $\mathbb{E}\left( U_\rho(\theta_{n+1}) - U_\rho(\theta_n) \right)$ can be controlled by $ \mathbb{E}\left\| \nabla U_\rho(\theta_n) \right\|_2^2 $.
\begin{lemma}
\label{basic lemma for convergence}
Assume that $\{\theta_n\}_n$ is the sequence generated by Algorithm \ref{ULA by PDFP}. Assume that  $x^*$ is the optimal solution of problem (\ref{MAP}). Assume that $\delta, \rho, K, \lambda, \gamma $ are the parameters in Algorithm \ref{ULA by PDFP}. Let $m \geqslant0$, $m\in \mathbb{R}$ . Assume that $f$ is $m$-strongly convex and $\rho_{\min}(BB^T) > 0$. If $ g $ is a function such that, $\forall v\in V$, $  \left\|\operatorname{prox}_{\frac{\lambda}{\gamma}g^*}(v)\right\|_2 \leqslant C$, then $\forall n\in \mathbb{N}$, 
\begin{equation} 
      \mathbb{E} \left( U_\rho\left( \theta_{n+1} \right) - U_\rho\left( \theta_n \right)  \right) \leqslant   - \dfrac{\delta}{2}(1 - \eta^K) \mathbb{E}\left\| \nabla U_\rho(\theta_n) \right\|_2^2 + \dfrac{2\delta d \lambda\rho + \delta\gamma^2C^2\eta^K}{2\lambda\rho^2} , 
\end{equation}
where 
\begin{equation}
    \eta  = \max\left( 1 -\left(m+ \dfrac{1}{\rho}\right)^2\left( \dfrac{2\gamma}{M_2 + \frac{1}{\rho}} - \gamma^2 \right), 1 - \lambda\rho_{\min} (BB^T) \right).
\end{equation}
\end{lemma}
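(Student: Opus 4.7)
}
The plan is to decompose one step of Algorithm \ref{ULA by PDFP} into its deterministic (approximate gradient) part and its Gaussian (noise) part, bound each part separately with the two preparatory Lemmas \ref{U_rho lemma 1} and \ref{U_rho lemma 2}, and then feed the $K$-step PDFP error estimate from Lemma \ref{subproblem x norm lemma} into the result. Concretely, I would set the auxiliary point
\begin{equation*}
\widetilde{\theta}_{n+1} \;:=\; \theta_n - \tfrac{\delta}{\rho}\bigl(\theta_n - x_{n,K}\bigr) \;=\; \theta_n - \tfrac{\delta}{\rho}\bigl(\theta_n - T_{n,2}T_n^{K-1}(0,\theta_n)\bigr),
\end{equation*}
so that $\theta_{n+1} = \widetilde{\theta}_{n+1} + \sqrt{2\delta}\,\xi_n$ with $\xi_n$ independent of $\theta_n$, and write $U_\rho(\theta_{n+1}) - U_\rho(\theta_n) = \bigl[U_\rho(\theta_{n+1}) - U_\rho(\widetilde{\theta}_{n+1})\bigr] + \bigl[U_\rho(\widetilde{\theta}_{n+1}) - U_\rho(\theta_n)\bigr]$.

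First I would take conditional expectation on $(\theta_n,x_{n,K})$ in the noise bracket and apply Lemma \ref{U_rho lemma 1}, which yields $\mathbb{E}[U_\rho(\theta_{n+1}) - U_\rho(\widetilde{\theta}_{n+1})\mid \theta_n,x_{n,K}] \leqslant \delta d/\rho$, hence the same bound in full expectation. For the deterministic bracket I would invoke Lemma \ref{U_rho lemma 2} with $x=\theta_n$ and $v=0$, yielding the three-term upper bound containing the negative $\|\nabla U_\rho(\theta_n)\|_2^2$ contribution, a squared-error term, and a cross term with the weight $\delta(1-\delta/\rho)\geqslant 0$ (which is nonnegative because $\delta\in(0,\rho]$).

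The algebraically delicate step is handling this cross term. I would apply Young's inequality $\langle a,b\rangle\leqslant \tfrac12\|a\|_2^2+\tfrac12\|b\|_2^2$ to the inner product, with $a=\nabla U_\rho(\theta_n)$ and $b=\rho^{-1}(T_{n,2}T_n^{K-1}(0,\theta_n)-\operatorname{prox}_{\rho U}(\theta_n))$. After collecting the coefficients of $\|\nabla U_\rho(\theta_n)\|_2^2$ and of $\|b\|_2^2$, the $\delta^2/(2\rho)$ contributions cancel cleanly and one obtains
\begin{equation*}
\mathbb{E}\bigl[U_\rho(\widetilde{\theta}_{n+1}) - U_\rho(\theta_n)\bigr] \;\leqslant\; -\tfrac{\delta}{2}\mathbb{E}\|\nabla U_\rho(\theta_n)\|_2^2 + \tfrac{\delta}{2}\mathbb{E}\Bigl\|\tfrac{T_{n,2}T_n^{K-1}(0,\theta_n)-\operatorname{prox}_{\rho U}(\theta_n)}{\rho}\Bigr\|_2^2.
\end{equation*}
This simplification is the main obstacle to verify, and it is the reason the statement is phrased with the factor $(1-\eta^K)$ rather than something messier in $\delta/\rho$.

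Finally I would invoke Lemma \ref{subproblem x norm lemma} at $v=0$, $x=\theta_n$ to bound the remaining squared approximation error by $\eta^K\bigl(\|\nabla U_\rho(\theta_n)\|_2^2 + \gamma^2 C^2/(\lambda\rho^2)\bigr)$. Adding the two brackets and rewriting $\delta d/\rho = 2\delta d\lambda\rho/(2\lambda\rho^2)$ gives exactly the stated bound, with the $\|\nabla U_\rho(\theta_n)\|_2^2$ coefficient collapsing to $-\tfrac{\delta}{2}(1-\eta^K)$ and the constant term becoming $(2\delta d\lambda\rho + \delta\gamma^2 C^2\eta^K)/(2\lambda\rho^2)$. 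No further structural assumption (in particular, no use of the strong convexity of $U_\rho$) is needed at this step; strong convexity and the boundedness of $\operatorname{prox}_{(\lambda/\gamma)g^*}$ only enter through their role in Lemma \ref{subproblem x norm lemma}.
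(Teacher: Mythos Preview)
Your proposal is correct and matches the paper's proof essentially line by line: the paper also splits the step into the noise part (Lemma \ref{U_rho lemma 1}) and the deterministic part (Lemma \ref{U_rho lemma 2}), handles the cross term with the same $\langle a,b\rangle\leqslant\tfrac12\|a\|_2^2+\tfrac12\|b\|_2^2$ bound (which it calls Cauchy--Schwarz), obtains the identical simplified coefficients $-\delta/2$ and $\delta/2$, and then plugs in Lemma \ref{subproblem x norm lemma}. The algebraic cancellation you flag as the ``main obstacle'' is exactly the one the paper performs, so there is nothing further to adjust.
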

\begin{proof}
See Appendix \ref{Appendix_basic lemma for convergence}.

\end{proof}

In the above lemma, whether $m = 0$ or $m > 0$ simply makes a difference in $\eta$. If $m>0$ we can further   deduce the boundness of $\mathbb{E}U_\rho(\theta_n)$ and $\mathbb{E}\left\| \theta_n - x^* \right\|_2^2  $, by the following theorem:
\begin{theorem}
\label{samples bound lemma}
Under the conditions in Lemma \ref{basic lemma for convergence}, if $m > 0$,  then $\forall n\in \mathbb{N}$, 
\begin{equation}
\label{ineq:upper_bound_Expectation}
    \begin{aligned}
    \mathbb{E}\left(U_\rho\left(\theta_n\right)  - U_\rho\left( x^* \right)\right)\leqslant \left(1 - m_\rho\delta(1 - \eta^K)\right)^n  \mathbb{E}\left( U_\rho(\theta_0) - U_\rho(x^*) \right) + \dfrac{ 2d \lambda\rho +  \gamma^2C^2\eta^K}{2\lambda\rho^2 m_\rho(1 - \eta^K)},
    \end{aligned}
\end{equation}
where  
\begin{equation}
    m_\rho = \dfrac{m}{1+\rho m}, \quad \eta = \max\left( 1 -\left(m+ \dfrac{1}{\rho}\right)^2\left( \dfrac{2\gamma}{M_2 + \frac{1}{\rho}} - \gamma^2 \right), 1 - \lambda\rho_{\min} (BB^T) \right).
\end{equation}
\end{theorem}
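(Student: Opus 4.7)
The plan is to combine the one-step bound of Lemma \ref{basic lemma for convergence} with the strong convexity of $U_\rho$ so as to produce a linear contraction on the energy gap $a_n := \mathbb{E}(U_\rho(\theta_n) - U_\rho(x^*))$, and then iterate the resulting affine recursion.

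First, I would use Lemma \ref{strongly convex Moreau envelope lemma}: since $f$ is $m$-strongly convex, $U = f + g\circ B$ is $m$-strongly convex, hence $U_\rho$ is $m_\rho$-strongly convex with $m_\rho = m/(1+\rho m)$. By Lemma \ref{Moreau lemma}(3), $x^*$ is the unique minimizer of $U_\rho$ as well, so $\nabla U_\rho(x^*) = 0$. The standard Polyak-type inequality for a differentiable $m_\rho$-strongly convex function (a direct consequence of Definition \ref{strongly convex definition} applied with $y = x^* $ and minimizing the right-hand side over $x^*-x$) gives
\begin{equation*}
\|\nabla U_\rho(\theta_n)\|_2^2 \;\geqslant\; 2 m_\rho\bigl(U_\rho(\theta_n) - U_\rho(x^*)\bigr).
\end{equation*}

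Next, I would substitute this lower bound into the conclusion of Lemma \ref{basic lemma for convergence}. Writing $a_n = \mathbb{E}(U_\rho(\theta_n) - U_\rho(x^*))$ and $\beta = (2\delta d \lambda\rho + \delta \gamma^2 C^2 \eta^K)/(2\lambda\rho^2)$, this yields
\begin{equation*}
a_{n+1} - a_n \;\leqslant\; -\delta m_\rho (1-\eta^K)\, a_n + \beta,
\end{equation*}
i.e.\ $a_{n+1} \leqslant \alpha\, a_n + \beta$ with contraction factor $\alpha = 1 - \delta m_\rho(1-\eta^K)$. Provided $\alpha \in [0,1)$ (which is implicit in the stepsize constraints $\delta \leqslant \rho$ together with $\eta^K < 1$ since $\eta<1$ under $\rho_{\min}(BB^T)>0$ and the $\gamma$ range), I would iterate the recursion: $a_n \leqslant \alpha^n a_0 + \beta(1-\alpha^n)/(1-\alpha) \leqslant \alpha^n a_0 + \beta/(1-\alpha)$. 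Computing $\beta/(1-\alpha) = (2d\lambda\rho + \gamma^2 C^2 \eta^K)/(2\lambda\rho^2 m_\rho(1-\eta^K))$ gives exactly the right-hand side of \eqref{ineq:upper_bound_Expectation}.

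The only subtle point is the Polyak inequality step, which is routine but worth recording carefully, and a brief sanity check that $\alpha \in [0,1)$ so the geometric series converges; both are straightforward consequences of the standing assumptions ($m>0$, the parameter ranges in Algorithm \ref{ULA by PDFP}, and $\rho_{\min}(BB^T)>0$). The rest is an immediate induction on $n$.
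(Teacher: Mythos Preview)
Your proposal is correct and follows essentially the same route as the paper's proof: invoke Lemma~\ref{strongly convex Moreau envelope lemma} and Lemma~\ref{Moreau lemma}(3) to obtain the Polyak--\L{}ojasiewicz inequality $\|\nabla U_\rho(\theta_n)\|_2^2 \geqslant 2m_\rho(U_\rho(\theta_n)-U_\rho(x^*))$, insert it into Lemma~\ref{basic lemma for convergence}, and iterate the resulting affine recursion on $a_n$. Your treatment is in fact slightly more careful than the paper's, since you explicitly flag the check that $\alpha = 1 - \delta m_\rho(1-\eta^K)\in[0,1)$ (which follows from $\delta\leqslant\rho$ and $m_\rho<1/\rho$), whereas the paper leaves this implicit.
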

\begin{proof}
See Appendix~\ref{Appendix_samples bound lemma}.

\end{proof}

By simple computation we have that $\eta\in[0, 1)$. Since $K$ is the number of iterations in subproblems and is independent of $\eta$,  
when $K\rightarrow +\infty$, we have that $\eta^K \rightarrow 0$.

Thus the gradients $\nabla U_\rho(\theta_n)  $ are almost accurate and the inequality \eqref{ineq:upper_bound_Expectation} is reduced to 
\begin{equation}
     \mathbb{E} \left( U_\rho\left( \theta_{n} \right) - U_\rho\left( x^* \right)  \right) \leqslant \left(1 - m_\rho\delta \right)^n  \mathbb{E}\left( U_\rho(\theta_0) - U_\rho(x^*) \right) + \dfrac{ d    }{ \rho  m_\rho },
\end{equation}
which matches Proposition 1 in \cite{dalalyan2017theoretical}. This lemma implies that the upper bound of  $\mathbb{E}U_\rho(\theta_n)$ includes a term not depending on the discretization parameter $\delta$ and another term approaching to zero as $n\rightarrow +\infty$. Moreover, we can also obtain the upper bound of $\mathbb{E}\left\| \theta_n - x^* \right\|_2^2$ by the $m_\rho$-strongly convexity of $U_\rho$ and $\frac{m_\rho}{2} \left\| x - x^* \right\|_2^2 \leqslant  U_\rho(x) - U_\rho(x^*)   $. Both the boundness of $\mathbb{E}U_\rho(\theta_n)$ and $\mathbb{E}\left( \left\| \theta_n - x^* \right\|_2^2 \right)$ essentially require the strongly convexity of $U_\rho$. 

Theorem \ref{samples bound lemma} shows that for any $K\in \mathbb{N}$, Algorithm \ref{ULA by PDFP} will not blow up in the sense of expectation. The remaining portion of this section will complete the nonasymptotic error analysis of the sampling. We now present a lemma quantifying the accummulated gradients of the log-density and the accummulated errors:
\begin{lemma}
Under the conditions in Lemma \ref{basic lemma for convergence}, we have
\label{gradient norm sum lemma}
\begin{equation}
\footnotesize
\begin{aligned}
    & \delta \sum_{n = 0}^{N-1}  \mathbb{E}\left\| \nabla U_\rho(\theta_n) \right\|_2^2 \leqslant \dfrac{2}{1 - \eta^K}\mathbb{E} \left( U_\rho\left( \theta_{0} \right) - U_\rho\left( x^* \right)  \right) + \dfrac{N\delta \left(2d \lambda\rho + \gamma^2C^2\eta^K\right)}{\lambda\rho^2(1 - \eta^K)}\\
    & \delta \sum_{n = 0}^{N-1} \mathbb{E}\left\| \dfrac{T_{n, 2}T_{n}^{K-1}(0, \theta_n) - \operatorname{prox}_{\rho U}(\theta_n)}{\rho} \right\|_2^2 \leqslant  \dfrac{2\eta^K}{1 - \eta^K}\mathbb{E} \left( U_\rho\left( \theta_{0} \right) - U_\rho\left( x^* \right)  \right) + \dfrac{N\delta\eta^K \left(2d \lambda\rho + \gamma^2C^2 \right)}{\lambda\rho^2(1 - \eta^K)}  \\
    & \delta \sum_{n = 0}^{N-1} \mathbb{E}\left\| \dfrac{\theta_n - T_{n, 2}T_{n}^{K-1}(0, \theta_n) }{\rho} \right\|_2^2  \leqslant \dfrac{4(1 + \eta^K) }{1 - \eta^K}\mathbb{E} \left( U_\rho\left( \theta_{0} \right) - U_\rho\left( x^* \right)  \right) + \dfrac{4N\delta  \left(d \lambda\rho(1 + \eta^K) + \gamma^2C^2\eta^K \right)}{\lambda\rho^2(1 - \eta^K)} ,
\end{aligned}
\end{equation}
where 
\begin{equation}
    \eta  = \max\left( 1 -\left(m+ \dfrac{1}{\rho}\right)^2\left( \dfrac{2\gamma}{M_2 + \frac{1}{\rho}} - \gamma^2 \right), 1 - \lambda\rho_{\min} (BB^T) \right).
\end{equation}
\end{lemma}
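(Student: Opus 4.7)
The plan is to derive all three inequalities by combining Lemma \ref{basic lemma for convergence} (a per-step descent-type bound on $U_\rho$) with Lemma \ref{subproblem x norm lemma} (the $K$-step PDFP approximation error) via a telescoping argument. None of the three claims should require new analytical input; the work is purely algebraic rearrangement.

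For the first inequality, I would sum the bound of Lemma \ref{basic lemma for convergence} over $n = 0, \ldots, N-1$. The left-hand side telescopes to $\mathbb{E} U_\rho(\theta_N) - \mathbb{E} U_\rho(\theta_0)$, which I would lower-bound using $U_\rho(\theta_N) \geqslant U_\rho(x^*)$ (a consequence of Lemma \ref{Moreau lemma}(3) together with the fact that $x^*$ minimizes $U$ and hence $U_\rho$). Rearranging then divides both sides by $\tfrac{1}{2}(1 - \eta^K)$, producing exactly the stated bound on $\delta \sum_{n=0}^{N-1} \mathbb{E}\|\nabla U_\rho(\theta_n)\|_2^2$.

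For the second inequality, I would apply Lemma \ref{subproblem x norm lemma} inside the expectation, giving
\begin{equation*}
\mathbb{E}\Bigl\| \tfrac{T_{n,2}T_n^{K-1}(0,\theta_n) - \operatorname{prox}_{\rho U}(\theta_n)}{\rho}\Bigr\|_2^2 \leqslant \eta^K \mathbb{E}\|\nabla U_\rho(\theta_n)\|_2^2 + \tfrac{\eta^K \gamma^2 C^2}{\lambda \rho^2}.
\end{equation*}
Multiplying by $\delta$, summing over $n$, and substituting the first inequality into the gradient-sum term on the right yields an expression where the $\delta N$ terms combine as $\eta^K[2d\lambda\rho + \gamma^2 C^2 \eta^K] + (1-\eta^K)\gamma^2 C^2 \eta^K = \eta^K(2d\lambda\rho + \gamma^2 C^2)$ after a common-denominator rearrangement, matching the stated bound.

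For the third inequality, I would use the identity
\begin{equation*}
\tfrac{\theta_n - T_{n,2}T_n^{K-1}(0,\theta_n)}{\rho} = \nabla U_\rho(\theta_n) \;-\; \tfrac{T_{n,2}T_n^{K-1}(0,\theta_n) - \operatorname{prox}_{\rho U}(\theta_n)}{\rho},
\end{equation*}
which holds thanks to Lemma \ref{Moreau lemma}(4). Applying the elementary inequality $\|a+b\|_2^2 \leqslant 2\|a\|_2^2 + 2\|b\|_2^2$, taking expectation, multiplying by $\delta$, and summing reduces the quantity to $2$ times the sum in (i) plus $2$ times the sum in (ii); then plugging in the already-established bounds and collecting the $\mathbb{E}(U_\rho(\theta_0) - U_\rho(x^*))$ coefficients as $2 \cdot \tfrac{2}{1-\eta^K} + 2 \cdot \tfrac{2\eta^K}{1-\eta^K} = \tfrac{4(1+\eta^K)}{1-\eta^K}$, and the remaining $N\delta$ coefficients as $2(2d\lambda\rho + \gamma^2 C^2 \eta^K) + 2\eta^K(2d\lambda\rho + \gamma^2 C^2) = 4d\lambda\rho(1+\eta^K) + 4\gamma^2 C^2 \eta^K$, gives exactly the third inequality. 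The only mild obstacle is bookkeeping during the final coefficient consolidation; no nontrivial estimate is needed.
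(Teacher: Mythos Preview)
Your proposal is correct and follows essentially the same route as the paper's proof: telescope Lemma~\ref{basic lemma for convergence} and use $U_\rho(\theta_N)\geqslant U_\rho(x^*)$ for the first bound, feed that into Lemma~\ref{subproblem x norm lemma} for the second, and combine both via $\|a-b\|_2^2\leqslant 2\|a\|_2^2+2\|b\|_2^2$ (which the paper labels ``Cauchy--Schwarz'') for the third. Your coefficient bookkeeping matches the paper's exactly.
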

\begin{proof}
See Appendix \ref{Appendix_gradient norm sum lemma}. 
\\
\end{proof}

Assume that $\{\mathbf{L}_t^\rho,t\geqslant 0 \}$ is the solution of Langevin diffusion (\ref{Langevin diffusion Moreau}). For a fixed time interval $[0, l]$ where $l = N\delta$, Lemma \ref{gradient norm sum lemma} shows an upper bound of $\delta \sum_{n = 0}^{N-1}  \mathbb{E}\left\| \nabla U_\rho(\theta_n) \right\|_2^2$ when $N\rightarrow +\infty$. For the sampling error analysis we aim to prove that the solution $L_l^\rho\rightarrow\pi_\rho$ as $l\rightarrow+\infty$, and then with fixed $l$ the distribution of the $N$-th sample $\theta_N$  can be arbitrarily close to $L_l^\rho$ as $N\rightarrow+\infty$ and $K\rightarrow+\infty$.

For the samples $\{\theta_n\}_{n = 0}^{N}$ generated by Algorithm \ref{ULA by PDFP}, we introduce a continuous time Markov process $\{ \mathbf{D}_{t}:t\geqslant 0 \}$  such that the distribution of  $\left( \theta_0, \theta_1,\dots, \theta_N  \right)$ and $(\mathbf{D}_0, \mathbf{D}_\delta,\dots,\mathbf{D}_{N\delta})$ coincide. The process $\{  \mathbf{D}_t:t\geqslant 0\}  $ is defined as the solution of the stochastic differential equation
\begin{equation}
\label{D by SDE}  
     \mathrm{d} \mathbf{D}_{t}=\mathbf{b}_{t}(\mathbf{D_t}) \mathrm{d} t+\sqrt{2} \mathrm{~d} \mathbf{W}_{t}, \quad t \geqslant 0, \mathbf{D}_{0}=\theta_0,
\end{equation}
\begin{equation}
\label{definition of b} 
    \mathbf{b}_{t}(\mathbf{D}_t)= \sum_{n=0}^{\infty} \dfrac{T_{n, 2}T_{n}^{K-1}(0, \mathbf{D}_{n\delta} ) - \mathbf{D}_{n\delta}}{\rho}\mathds{1}_{[n \delta,(n + 1) \delta]}(t)  ,
\end{equation}
where $T_{n}(v,x)$ and $T_{n, 2}(v,x)$ are defined by (\ref{definition of Tn}).

\begin{theorem}
\label{Markov process induction lemma}
If the continuous time Markov process $\{ \mathbf{D}_{t}:t\geqslant 0 \}$ is defined by (\ref{D by SDE}, \ref{definition of b}), then  $\left( \theta_0, \theta_1,\dots, \theta_N  \right)$ has the same distribution with $(\mathbf{D}_0, \mathbf{D}_\delta,\dots,\mathbf{D}_{N\delta})$.
\end{theorem}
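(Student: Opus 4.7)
The plan is to exploit the fact that the drift $\mathbf{b}_t$ in \eqref{definition of b} is piecewise constant in $t$: on each interval $[n\delta, (n+1)\delta)$, the indicator $\mathds{1}_{[n\delta,(n+1)\delta]}(t)$ selects exactly one summand, and that summand depends only on the value $\mathbf{D}_{n\delta}$, not on $t$ or on $\mathbf{D}_t$ for $t > n\delta$. Thus on this interval the SDE \eqref{D by SDE} reduces to a drift-Brownian motion with a deterministic (given $\mathbf{D}_{n\delta}$) constant drift vector.

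First, I would argue well-posedness of \eqref{D by SDE} by induction: assuming $\mathbf{D}_{n\delta}$ is defined, the drift on $[n\delta,(n+1)\delta)$ is the constant vector $\rho^{-1}(T_{n,2}T_n^{K-1}(0,\mathbf{D}_{n\delta})-\mathbf{D}_{n\delta})$, so the SDE admits the unique strong solution
\begin{equation*}
\mathbf{D}_t = \mathbf{D}_{n\delta} + \frac{T_{n,2}T_n^{K-1}(0,\mathbf{D}_{n\delta}) - \mathbf{D}_{n\delta}}{\rho}\,(t - n\delta) + \sqrt{2}\,(\mathbf{W}_t - \mathbf{W}_{n\delta})
\end{equation*}
for $t \in [n\delta,(n+1)\delta]$. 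Evaluating at $t=(n+1)\delta$ yields the one-step recursion
\begin{equation*}
\mathbf{D}_{(n+1)\delta} = \mathbf{D}_{n\delta} - \frac{\delta}{\rho}\bigl(\mathbf{D}_{n\delta} - T_{n,2}T_n^{K-1}(0,\mathbf{D}_{n\delta})\bigr) + \sqrt{2\delta}\,\tilde\xi_n,
\end{equation*}
where $\tilde\xi_n := \delta^{-1/2}(\mathbf{W}_{(n+1)\delta}-\mathbf{W}_{n\delta}) \sim \mathcal{N}(0,I)$, and the family $\{\tilde\xi_n\}_{n\geqslant 0}$ is i.i.d.\ and independent of $\mathbf{D}_{n\delta}$ by the independent-increments property of the Wiener process.

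Now I would compare this with the scheme \eqref{ULA by PDFP simplified notation}, which, upon identifying $x_{n,K} = T_{n,2}T_n^{K-1}(0,\theta_n)$, reads
\begin{equation*}
\theta_{n+1} = \theta_n - \frac{\delta}{\rho}\bigl(\theta_n - T_{n,2}T_n^{K-1}(0,\theta_n)\bigr) + \sqrt{2\delta}\,\xi_n, \quad \xi_n \sim \mathcal{N}(0,I) \text{ i.i.d.}
\end{equation*}
The two recursions are identical deterministic functions of the previous state and a standard Gaussian noise. An induction on $n$ then gives the result: the base case $\mathbf{D}_0 = \theta_0$ is built into \eqref{D by SDE}, and if $(\mathbf{D}_0,\dots,\mathbf{D}_{n\delta})$ has the same joint law as $(\theta_0,\dots,\theta_n)$, then applying the same update map and coupling $\tilde\xi_n$ with $\xi_n$ extends the equality of joint laws to index $n+1$.

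The only subtlety — and arguably the one place to be careful rather than a real obstacle — is the drift at the boundary points $t=n\delta$, where two indicators overlap in \eqref{definition of b}; but since this occurs on a Lebesgue-null set of times it does not affect the stochastic integral, and one may, without loss of generality, take the drift on $[n\delta,(n+1)\delta)$ as the formula above. Everything else is pure bookkeeping: the exact solvability of the SDE on each interval, together with independence of Wiener increments, turns the continuous-time description into exactly the discrete scheme of Algorithm~\ref{ULA by PDFP}.
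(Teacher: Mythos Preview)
Your proposal is correct and follows essentially the same approach as the paper: induction on $n$, integrating the constant-drift SDE over $[n\delta,(n+1)\delta]$ to recover the discrete recursion, and matching it against \eqref{ULA by PDFP simplified notation}. Your version is in fact slightly more careful than the paper's---you track the \emph{joint} law rather than just the marginals, you note the i.i.d.\ property of the Brownian increments, and you flag the harmless overlap of indicators at the grid points---but the core argument is identical.
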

\begin{proof}
We prove this by induction.\\
For $n\in \mathbb{Z}$, assume that $ \mathbf{D}_{n\delta}$ and $\theta_n $ have the same distribution. By (\ref{D by SDE}, \ref{definition of b}) we have
\begin{equation}
\label{D induction}
\begin{aligned}
\mathbf{D}_{(n + 1)\delta} &= \mathbf{D}_{n\delta} + \int _{n \delta}^{(n + 1)\delta} b_\tau(\mathbf{D}_\tau) \mathrm{d}\tau + \int _{n \delta}^{(n + 1)\delta} \sqrt{2}\mathrm{d} \mathbf{W}_{\tau}\\
& = \mathbf{D}_{n\delta} + \int _{n \delta}^{(n + 1)\delta} \sum_{k=0}^{\infty} \dfrac{T_{n, 2}T_{n}^{K-1}(0, \mathbf{D}_{k\delta} ) - \mathbf{D}_{k\delta}}{\rho}\mathds{1}_{[k \delta,(k+1) \delta]}(\tau) \mathrm{d}\tau + \sqrt{2\delta} \xi_n \\
& = \mathbf{D}_{n\delta} + \dfrac{\delta}{\rho}\left(T_{n, 2}T_{n}^{K-1}(0, \mathbf{D}_{n\delta} ) - \mathbf{D}_{n\delta} \right)  + \sqrt{2\delta} \xi_n \\
&= \left(1 - \dfrac{\delta}{\rho}\right)\mathbf{D}_{n\delta} +  \dfrac{\delta}{\rho}T_{n, 2}T_{n}^{K-1}(0, \mathbf{D}_{n\delta}) + \sqrt{2\delta} \xi_n .
\end{aligned}
\end{equation}
Compare (\ref{D induction}) with (\ref{ULA by PDFP simplified notation}) and we can deduce that $\mathbf{D}_{(n + 1)\delta}$ and $ \theta_{n+1} $ have the same distribution. By induction we complete the proof.

\end{proof}

Now we have a continuous time Markov process $\{ \mathbf{D}_{t}:t\geqslant 0 \}$. To obtain the KL distance between the distributions of the processes $\{\mathbf{L}^\rho: t \in[0,  N\delta]\}$ and $\{\mathbf{D}: t \in[0,  N\delta]\}$ we use a lemma from \cite{dalalyan2017theoretical} based on the Girsanov formula:

\begin{lemma}
\label{KL divergence lemma}
If for some $B>0$ the non-anticipative drift function $\mathbf{b}: C\left(\mathbb{R}_{+}, \mathbb{R}^{d}\right) \times$ $\mathbb{R}_{+} \rightarrow \mathbb{R}^{d}$ satisfies the inequality $\|\mathbf{b}(\mathbf{D}, t)\|_{2} \leqslant B\left(1+\|\mathbf{D}\|_{\infty}\right)$ for every $t \in[0, N\delta]$ and every $\mathbf{D} \in$
$C\left(\mathbb{R}_{+}, \mathbb{R}^{d}\right)$, then the Kullback-Leibler divergence between $\mathbb{P}_{\mathbf{L}^\rho}^{\mathbf{x}, N\delta }$ and $\mathbb{P}_{\mathbf{D}}^{\mathbf{x}, N\delta}$, the distributions of the processes $\{\mathbf{L}^\rho: t \in[0,  N\delta]\}$ and $\{\mathbf{D}: t \in[0,  N\delta]\}$ with the initial value $\mathbf{L}^\rho_{0}=\mathbf{D}_{0}=\mathbf{x}$, is given by
\begin{equation}
    \mathrm{KL}\left(\mathbb{P}_{\mathbf{L}^\rho}^{\mathbf{x},  N\delta} \| \mathbb{P}_{\mathbf{D}}^{\mathbf{x},  N\delta}\right)\leqslant\frac{1}{4} \int_{0}^{ N\delta} \mathbb{E}\left[\left\|\nabla U_\rho\left(\mathbf{D}_{t}\right)+\mathbf{b}_{t}(\mathbf{D}_t)\right\|_{2}^{2}\right] \mathrm{d} t.
\end{equation}
\end{lemma}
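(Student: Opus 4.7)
The plan is to recognize the claim as a standard consequence of Girsanov's theorem. Both $\mathbb{P}_{\mathbf{L}^\rho}^{\mathbf{x},N\delta}$ and $\mathbb{P}_{\mathbf{D}}^{\mathbf{x},N\delta}$ are laws of Ito diffusions on $[0,N\delta]$ driven by the same constant diffusion coefficient $\sqrt{2}\,I$, starting from the same deterministic point $\mathbf{x}$, and differing only in their drifts ($-\nabla U_\rho$ versus $\mathbf{b}_t$). Hence the two measures are mutually absolutely continuous on the canonical path space, and the KL divergence can be read off from the Radon--Nikodym density.

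The first step is to write this density explicitly. Setting
\[
\theta_t := \tfrac{1}{\sqrt{2}}\bigl(-\nabla U_\rho(\mathbf{D}_t) - \mathbf{b}_t(\mathbf{D})\bigr),
\]
Girsanov's theorem identifies $d\mathbb{P}_{\mathbf{L}^\rho}^{\mathbf{x},N\delta}/d\mathbb{P}_{\mathbf{D}}^{\mathbf{x},N\delta}$ with the stochastic exponential $Z_{N\delta}$ of $\int_0^{\cdot}\theta_s\cdot d\mathbf{W}_s$, where $\mathbf{W}$ is the driving Brownian motion of $\mathbf{D}$ under $\mathbb{P}_{\mathbf{D}}^{\mathbf{x},N\delta}$. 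The second step is to compute $\mathbb{E}[\log Z_{N\delta}]$ under $\mathbb{P}_{\mathbf{L}^\rho}^{\mathbf{x},N\delta}$: since the shifted process $\tilde{\mathbf{W}}_t := \mathbf{W}_t - \int_0^t\theta_s\,ds$ is a Brownian motion under $\mathbb{P}_{\mathbf{L}^\rho}^{\mathbf{x},N\delta}$, rewriting the Ito integral in $\log Z_{N\delta}$ in terms of $\tilde{\mathbf{W}}$ turns it into a $\mathbb{P}_{\mathbf{L}^\rho}$-martingale plus $\int_0^{N\delta}\|\theta_s\|_2^2\,ds$. Taking expectation, applying Fubini, and substituting the definition of $\theta_t$ yields the bound $\tfrac{1}{4}\int_0^{N\delta}\mathbb{E}\|\nabla U_\rho(\mathbf{D}_s) + \mathbf{b}_s(\mathbf{D})\|_2^2\,ds$.

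The hard part will be ensuring that every Ito integral encountered is a genuine martingale and not merely a local martingale whose expectation falls below one. This is precisely where the hypothesis $\|\mathbf{b}(\mathbf{D},t)\|_2 \leqslant B(1+\|\mathbf{D}\|_\infty)$ enters essentially: combined with the $1/\rho$-Lipschitz continuity of $\nabla U_\rho$ supplied by Lemma~\ref{Moreau lemma}\,(2), it produces the linear growth estimate $\|\theta_t\|_2 \leqslant C(1+\|\mathbf{D}\|_\infty)$. A Gronwall-type moment bound for $\mathbb{E}[\sup_{s\leqslant N\delta}\|\mathbf{D}_s\|_2^2]$, standard for SDEs with at most linearly growing coefficients, then verifies the Benes condition, which in turn guarantees that $Z$ is a uniformly integrable martingale on $[0,N\delta]$. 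Once this integrability step is secured the rest is routine, and the ``$\leqslant$'' in the statement is there to absorb any slack in these estimates. Alternatively one may note that the claim is a direct quotation of Lemma~1 of~\cite{dalalyan2017theoretical}, whose proof applies verbatim in our setting.
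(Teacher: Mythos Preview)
Your Girsanov-based argument is the correct approach, and in fact the paper does not supply its own proof of this lemma at all: it simply quotes the result from \cite{dalalyan2017theoretical} (introduced with the phrase ``we use a lemma from \cite{dalalyan2017theoretical} based on the Girsanov formula''). So there is nothing to compare against beyond the citation itself, and your sketch is precisely the standard argument underlying that citation.

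One small correction: in the paper's usage, Lemma~1 of \cite{dalalyan2017theoretical} is the exponential ergodicity statement invoked in Lemma~\ref{pi_rho exponentially fast lemma}, not the Girsanov--KL bound. The present lemma corresponds to a different numbered lemma in that reference, so you should adjust the cross-reference in your final sentence accordingly.
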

Using lemma \ref{KL divergence lemma} we can prove the following theorem which gives an upper bound of the KL divergence:
\begin{theorem}
\label{KL divergence error theorem}
Let $l = N\delta$ be fixed. Assume that $\mathbf{D}$ is defined by (\ref{D by SDE}, \ref{definition of b}). Suppose that all the conditions of Lemma \ref{basic lemma for convergence} and Lemma \ref{KL divergence lemma} are satisfied,  then
\begin{equation}
    \begin{aligned}
    &\operatorname{KL}\left(\mathbb{P}_{\mathbf{L}^\rho}^{\mathbf{x}, l} \| \mathbb{P}_{\mathbf{D}}^{\mathbf{x}, l}\right) \leqslant \dfrac{2\delta^2(1 + \eta^K) + 3\rho^2\eta^K}{3\rho^2(1 - \eta^K)} \mathbb{E} \left( U_\rho\left( x \right) - U_\rho\left( x^* \right)  \right)\\
    & \quad + \dfrac{ld\lambda\rho\left(4\delta^2(1+\eta^K) + 3\delta\rho(1 - \eta^K) + 6 \rho^2\eta^K\right) + l\gamma^2C^2\eta^K(4\delta^2 + 3 \rho^2)  }{6\lambda\rho^4(1 - \eta^K)},
    \end{aligned}
\end{equation}
where 
\begin{equation}
    \eta  = \max\left( 1 -\left(m+ \dfrac{1}{\rho}\right)^2\left( \dfrac{2\gamma}{M_2 + \frac{1}{\rho}} - \gamma^2 \right), 1 - \lambda\rho_{\min} (BB^T) \right).
\end{equation}
\end{theorem}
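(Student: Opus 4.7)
The starting point is Lemma~\ref{KL divergence lemma}, which reduces the task to bounding
\begin{equation*}
\tfrac14\int_0^{l}\mathbb{E}\bigl\|\nabla U_\rho(\mathbf{D}_t)+\mathbf{b}_t(\mathbf{D}_t)\bigr\|_2^2\,\mathrm{d}t .
\end{equation*}
The plan is to split the integrand on each subinterval $[n\delta,(n+1)\delta]$ into a discretization part and an approximation-of-prox part, bound each separately, and then sum using Lemma~\ref{gradient norm sum lemma} together with the distributional identity from Theorem~\ref{Markov process induction lemma}.

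First I would add and subtract $\nabla U_\rho(\mathbf{D}_{n\delta})$ and use the elementary inequality $\|a+b\|^2\leqslant (1+\alpha)\|a\|^2+(1+1/\alpha)\|b\|^2$ (for a carefully chosen $\alpha$, picked at the end so that the constants match those in the statement) to write, for $t\in[n\delta,(n+1)\delta]$,
\begin{equation*}
\bigl\|\nabla U_\rho(\mathbf{D}_t)+\mathbf{b}_t(\mathbf{D}_t)\bigr\|_2^2
\leqslant (1+\alpha)\bigl\|\nabla U_\rho(\mathbf{D}_t)-\nabla U_\rho(\mathbf{D}_{n\delta})\bigr\|_2^2
+ (1+\tfrac{1}{\alpha})\bigl\|\nabla U_\rho(\mathbf{D}_{n\delta})+\mathbf{b}_t(\mathbf{D}_t)\bigr\|_2^2 .
\end{equation*}
For the second term, using $\nabla U_\rho(x)=(x-\operatorname{prox}_{\rho U}(x))/\rho$ from Lemma~\ref{Moreau lemma}(4), the definition of $\mathbf{b}_t$ collapses the expression to $(T_{n,2}T_n^{K-1}(0,\mathbf{D}_{n\delta})-\operatorname{prox}_{\rho U}(\mathbf{D}_{n\delta}))/\rho$, which is exactly the quantity controlled by Lemma~\ref{subproblem x norm lemma}.

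For the first term I would use the $1/\rho$-Lipschitz continuity of $\nabla U_\rho$ (Lemma~\ref{Moreau lemma}(2)) to reduce it to $(1/\rho^2)\mathbb{E}\|\mathbf{D}_t-\mathbf{D}_{n\delta}\|_2^2$. Since $\mathbf{b}_\tau$ is constant on $[n\delta,(n+1)\delta]$, the SDE~\eqref{D by SDE} gives
\begin{equation*}
\mathbf{D}_t-\mathbf{D}_{n\delta}=(t-n\delta)\,\frac{T_{n,2}T_n^{K-1}(0,\mathbf{D}_{n\delta})-\mathbf{D}_{n\delta}}{\rho}+\sqrt{2}\,(\mathbf{W}_t-\mathbf{W}_{n\delta}),
\end{equation*}
so after taking expectations and integrating $t$ over $[n\delta,(n+1)\delta]$ I obtain a contribution proportional to $\delta^3\mathbb{E}\|(\mathbf{D}_{n\delta}-T_{n,2}T_n^{K-1}(0,\mathbf{D}_{n\delta}))/\rho\|_2^2+\delta^2 d$.

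Summing over $n=0,\dots,N-1$ and invoking Theorem~\ref{Markov process induction lemma} to replace $\mathbf{D}_{n\delta}$ by $\theta_n$ in expectation, the three sums
\begin{equation*}
\delta\sum_n\mathbb{E}\|\nabla U_\rho(\theta_n)\|_2^2,\quad
\delta\sum_n\mathbb{E}\bigl\|\tfrac{T_{n,2}T_n^{K-1}(0,\theta_n)-\operatorname{prox}_{\rho U}(\theta_n)}{\rho}\bigr\|_2^2,\quad
\delta\sum_n\mathbb{E}\bigl\|\tfrac{\theta_n-T_{n,2}T_n^{K-1}(0,\theta_n)}{\rho}\bigr\|_2^2
\end{equation*}
that appear after expansion are precisely the ones bounded in Lemma~\ref{gradient norm sum lemma}. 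Substituting those bounds, using $l=N\delta$, and collecting the coefficients of $\mathbb{E}(U_\rho(x)-U_\rho(x^*))$ and of the $d$- and $C^2$-dependent remainder will produce the stated inequality.

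The main obstacle is bookkeeping: to land on the exact constants $2\delta^2(1+\eta^K)+3\rho^2\eta^K$ and $4\delta^2(1+\eta^K)+3\delta\rho(1-\eta^K)+6\rho^2\eta^K$ appearing in the theorem, the splitting parameter $\alpha$ in the inequality above must be tuned so that the discretization contribution (which carries a factor $\delta^2/\rho^2$ from the Brownian part and $\delta^3/\rho^2$ times Lemma~\ref{subproblem x norm lemma}) and the subproblem-approximation contribution (which carries $\eta^K$) combine with the correct weights. The remaining work is purely algebraic: apply Lemma~\ref{gradient norm sum lemma} term-by-term and simplify.
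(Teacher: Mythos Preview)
Your approach is essentially identical to the paper's. The only point worth noting is that no tuning of $\alpha$ is needed: the paper simply takes $\alpha=1$ (i.e., $\|a+b\|^2\leqslant 2\|a\|^2+2\|b\|^2$), and the stated constants fall out directly once you substitute the second and third bounds of Lemma~\ref{gradient norm sum lemma} (the first sum $\delta\sum_n\mathbb{E}\|\nabla U_\rho(\theta_n)\|_2^2$ is not used explicitly in this step---it is already absorbed into the other two).
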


\begin{proof}
See Appendix \ref{Appendix_KL divergence error theorem}.

\end{proof}

Given fixed $\rho$, this upper bound of  $\operatorname{KL}\left(\mathbb{P}_{\mathbf{L}^\rho}^{\mathbf{x}, l} \| \mathbb{P}_{\mathbf{D}}^{\mathbf{x}, l}\right)$ tends to $0$ as $\delta\rightarrow 0$ and $K\rightarrow+\infty$. Meanwhile, this upper bound also partly depends on the initial sample $\mathbf{D}_0 = \theta_0 = x$. Up to now, we have no detailed assumption on $ \theta_0$. If $\theta_0$ is drawn from the initial distribution $\nu$, from lemma \ref{Moreau lemma} and lemma 1 in \cite{dalalyan2017theoretical} one can deduce the following lemma:
\begin{lemma}
\label{pi_rho exponentially fast lemma}
$m\in \mathbb{R}, m>0$. If $U$ is $m$-strongly convex and $m_\rho = \frac{m}{1+\rho m}$, then for any initial probability density $\nu$ we have
\[
\left\|  \nu \mathbf{P}_{\mathbf{L}^\rho}^t - \pi_\rho  \right\|_{\mathrm{TV}} \leqslant \dfrac{1}{2} \chi ^2(\nu\| \pi_\rho)^{1/2} \exp\left(\dfrac{-tm_\rho}{2}\right), \quad \forall t \geqslant 0.
\]
\end{lemma}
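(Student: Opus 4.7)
The plan is to reduce this statement to the standard exponential TV-convergence bound for Langevin diffusions with strongly convex potentials, which is precisely Lemma 1 of Dalalyan (2017) cited in the excerpt. So the work is really to verify that the Moreau envelope potential $U_\rho$ satisfies the hypotheses needed to invoke that black-box result, and then read off the rate.

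First, I would record the regularity of the driving potential of the diffusion $\mathbf{L}^\rho_t$. By Lemma \ref{Moreau lemma}(2), $U_\rho$ is convex and its gradient is $(1/\rho)$-Lipschitz; in particular the SDE $\mathrm{d} \mathbf{L}^\rho_t = -\nabla U_\rho(\mathbf{L}^\rho_t)\,\mathrm{d}t + \sqrt{2}\,\mathrm{d} W_t$ has a well-defined strong solution and an invariant density proportional to $\exp(-U_\rho)$, i.e.\ $\pi_\rho$. Second, I would upgrade convexity to strong convexity: Lemma \ref{strongly convex Moreau envelope lemma} applied to $h=U$ (which is $m$-strongly convex by hypothesis) gives that $U_\rho$ is $m_\rho$-strongly convex with $m_\rho = m/(1+\rho m)$, exactly the constant appearing in the bound.

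With these two properties in hand, the diffusion driven by $U_\rho$ fits verbatim into the setting of Dalalyan's Lemma 1, which states that for a Langevin diffusion with $\alpha$-strongly convex and differentiable potential and any initial density $\nu$,
\[
\bigl\|\nu \mathbf{P}^t - \pi\bigr\|_{\mathrm{TV}} \leqslant \tfrac{1}{2}\,\chi^2(\nu\|\pi)^{1/2} \exp(-t\alpha/2).
\]
Specializing to $\alpha = m_\rho$, $\pi = \pi_\rho$, and $\mathbf{P}^t = \mathbf{P}_{\mathbf{L}^\rho}^t$ yields the claimed inequality. The chain-of-reasoning is therefore essentially a verification check.

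The only nontrivial piece is making sure the strong convexity constant of $U_\rho$ is indeed $m_\rho$ and not some smaller quantity, but that is exactly the content of Lemma \ref{strongly convex Moreau envelope lemma}, which has already been proved in the excerpt. No additional obstacle is expected; the statement is essentially a corollary obtained by composing Lemma \ref{strongly convex Moreau envelope lemma}, Lemma \ref{Moreau lemma}, and the cited result from \cite{dalalyan2017theoretical}.
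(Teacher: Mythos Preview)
Your proposal is correct and matches the paper's approach: the paper's own proof is simply the sentence ``See lemma 1 in \cite{dalalyan2017theoretical}'', and your write-up just makes explicit the hypothesis check (via Lemma~\ref{Moreau lemma}(2) and Lemma~\ref{strongly convex Moreau envelope lemma}) needed to invoke that cited result with strong-convexity constant $m_\rho$.
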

\begin{proof}
See lemma 1 in \cite{dalalyan2017theoretical}.

\end{proof}

We can prove the following lemma when the initial distribution $\nu$ is a Gaussian distribution $\mathcal{N}_d (x^*, \rho \mathbf{I}_d)$ with mean $x^*$.

\begin{lemma}
\label{X2 divergence lemma}
$m\in \mathbb{R}, m>0$. Assume that  $x^*$ is the optimal solution of problem (\ref{MAP}). If $U$ is $m$-strongly convex and $m_\rho = \frac{m}{1+\rho m}$, if $\nu$ is the density of the Gaussian distribution $\mathcal{N}_d (x^*, \rho \mathbf{I}_d)$, then we have
\[
\int _{\mathbb{R}^{d}}  \dfrac{\nu^2(x)}{\pi_\rho(x)}\mathrm{d} x \leqslant  \dfrac{1}{(\rho m_\rho)^{\frac{d}{2}}}.
\]
\end{lemma}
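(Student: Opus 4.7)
The plan is to compute the ratio $\nu^{2}/\pi_\rho$ explicitly and then sandwich $U_\rho$ from both sides: a quadratic upper bound coming from the $\frac{1}{\rho}$-Lipschitz gradient property (Lemma \ref{Moreau lemma} (2)) and a quadratic lower bound coming from $m_\rho$-strong convexity (Lemma \ref{strongly convex Moreau envelope lemma}). Since Lemma \ref{Moreau lemma} (3) tells us that $x^\ast$ is also the minimizer of $U_\rho$, we have $\nabla U_\rho(x^\ast)=0$, which is what makes both quadratic bounds center at $x^\ast$, matching the mean of the Gaussian $\nu$.

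First I would write $\pi_\rho(x)=\frac{1}{K'}\exp(-U_\rho(x))$ with $K'=\int \exp(-U_\rho)\,dx$, and $\nu(x)=(2\pi\rho)^{-d/2}\exp(-\|x-x^\ast\|^2/(2\rho))$, so that
\begin{equation*}
\frac{\nu^{2}(x)}{\pi_\rho(x)} \;=\; \frac{K'}{(2\pi\rho)^{d}}\exp\!\Bigl(-\tfrac{1}{\rho}\|x-x^\ast\|_2^{2}+U_\rho(x)\Bigr).
\end{equation*}
Next I would apply Lemma \ref{Lipschitz gradient lemma} with the $\frac{1}{\rho}$-Lipschitz gradient of $U_\rho$ at $y=x$ and $x=x^\ast$, using $\nabla U_\rho(x^\ast)=0$, to obtain the pointwise upper bound $U_\rho(x)\leq U_\rho(x^\ast)+\frac{1}{2\rho}\|x-x^\ast\|_2^{2}$. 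Substituting this into the exponent kills half of the quadratic and leaves a genuine Gaussian integrand, giving
\begin{equation*}
\int_{\mathbb{R}^{d}}\frac{\nu^{2}(x)}{\pi_\rho(x)}\,dx \;\leq\; \frac{K'\,e^{U_\rho(x^\ast)}}{(2\pi\rho)^{d}}\int_{\mathbb{R}^{d}}\exp\!\Bigl(-\tfrac{1}{2\rho}\|x-x^\ast\|_2^{2}\Bigr)dx \;=\; \frac{K'\,e^{U_\rho(x^\ast)}}{(2\pi\rho)^{d/2}}.
\end{equation*}

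It remains to control $K'e^{U_\rho(x^\ast)}$. Here I would use strong convexity in the opposite direction: Lemma \ref{strongly convex Moreau envelope lemma} gives that $U_\rho$ is $m_\rho$-strongly convex, and combined with $\nabla U_\rho(x^\ast)=0$ this yields $U_\rho(x)\geq U_\rho(x^\ast)+\frac{m_\rho}{2}\|x-x^\ast\|_2^{2}$. Exponentiating and integrating produces $K'\leq e^{-U_\rho(x^\ast)}(2\pi/m_\rho)^{d/2}$. Plugging this into the previous display collapses to
\begin{equation*}
\int_{\mathbb{R}^{d}}\frac{\nu^{2}(x)}{\pi_\rho(x)}\,dx \;\leq\; \frac{(2\pi/m_\rho)^{d/2}}{(2\pi\rho)^{d/2}} \;=\; \frac{1}{(\rho m_\rho)^{d/2}},
\end{equation*}
which is the desired bound.

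There is no real obstacle; the only subtle point is recognizing that both sides of the sandwich are needed and that they are both anchored at the common minimizer $x^\ast$. In particular one should be careful that the Gaussian upper bound on the integrand only produces something finite precisely because the coefficient $\frac{1}{\rho}$ in the exponent of $\nu^{2}$ is strictly larger than the Lipschitz-constant coefficient $\frac{1}{2\rho}$ contributed by $U_\rho$, leaving $\frac{1}{2\rho}$ of quadratic decay — this is where the variance $\rho$ of the initial Gaussian is chosen to match $1/L_{U_\rho}$.
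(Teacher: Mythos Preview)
Your proposal is correct and follows essentially the same approach as the paper: both arguments bound $U_\rho(x)-U_\rho(x^\ast)$ above via the $\tfrac{1}{\rho}$-Lipschitz gradient (Lemma~\ref{Lipschitz gradient lemma}) and bound the normalizing constant $K'$ via the $m_\rho$-strong convexity of $U_\rho$, using $\nabla U_\rho(x^\ast)=0$ throughout. The only cosmetic difference is that the paper packages both estimates into a single pointwise bound on $\pi_\rho(x)^{-1}$ before integrating against $\nu^2$, whereas you apply the Lipschitz bound first, integrate, and then bound $K'$ separately; the computations and the final constant are identical.
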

\begin{proof}
The proof follows the same pattern of lemma 5 in \cite{dalalyan2017theoretical}.

From  (\ref{Moreau approximation 2}), Lemma \ref{Moreau lemma} and Lemma \ref{Lipschitz gradient lemma},
\begin{equation}
\small
\begin{aligned}
&\pi_{\rho}(x)^{-1} =\exp \{U_\rho(x)\} \int_{\mathbb{R}^{d}} \exp \{-U_\rho(\overline{x})\} \mathrm{d} \overline{x} =\exp \{U_\rho(x)-U_\rho(x^*)\} \int_{\mathbb{R}^{d}} \exp \{-U_\rho(\overline{x})+U_\rho(x^*)\} \mathrm{d} \overline{x} \\
& \leqslant \exp \left\{\nabla U_\rho(x^*)^{\mathrm{T}}(x-x^*)+\frac{1}{2\rho}\|x-x^*\|_{2}^{2}\right\} \int_{\mathbb{R}^{d}} \exp \left\{-\nabla U_\rho(x^*)^{\mathrm{T}}(\overline{x}-x^*)-\frac{m_\rho}{2}\|\overline{x}-x^*\|_{2}^{2}\right\} \mathrm{d} \overline{x} \\
& = \left(\frac{2 \pi}{m_\rho}\right)^{d / 2} \exp \left( \frac{1}{2\rho}\|x-x^*\|_{2}^{2} \right),
\end{aligned}
\end{equation}
then we have
\begin{equation}
\begin{aligned}
\int _{\mathbb{R}^{d}}  \dfrac{\nu^2(x)}{\pi_\rho(x)}\mathrm{d} x &=(2 \pi \rho)^{-d} \int_{\mathbb{R}^{d}} \exp \left\{-\frac{1}{\rho}\|x - x^* \|_{2}^{2}\right\} \pi_\rho(x)^{-1} \mathrm{d} x \\
& \leqslant(2 \pi \rho)^{-d}\left(\frac{2 \pi}{m_\rho}\right)^{d / 2}  \int_{\mathbb{R}^{d}} \exp \left\{-\frac{\|x-x^*\|_{2}^{2}}{2 \rho}\right\} \mathrm{d} x \\
& = \dfrac{1}{(\rho m_\rho)^{\frac{d}{2}}}.
\end{aligned}
\end{equation}

\end{proof}

In the next theorem we finally give the error analysis of  the Total-Variation norm between the distribution of the $N$-th sample $\theta_n$ and $\pi_\rho$.

\begin{theorem}
\label{TV norm error theorem}
Let $l = N\delta$. Assume that $\mathbf{D}$ is defined by (\ref{D by SDE}, \ref{definition of b}). Suppose that all the conditions of Lemma \ref{basic lemma for convergence} and Lemma \ref{KL divergence lemma} are satisfied. Assume that $\nu$ is the  Gaussian distribution $\mathcal{N}_d (x^*, \rho \mathbf{I}_d)$. If $m>0$ and $m_\rho = \frac{m}{1+\rho m}$,  then the TV-norm between the distribution of the $N$-th sample $\theta_N$ and the distribution $\pi_\rho$ satisfies
\begin{equation}
\small
\begin{aligned}
    &\left\|\nu \mathbf{P}_{\theta_N}-\mathbf{P}_{\pi_\rho}\right\|_{\mathrm{TV}}\leqslant  \dfrac{1}{2}\exp\left( -\dfrac{d}{4}\log(\rho m_\rho) - \dfrac{lm_\rho}{2}  \right)+ \\
    & \sqrt{ \dfrac{\lambda d \left( 2\delta^2\rho^2 + 4l\delta^2\rho + 3l\delta\rho^2   \right) +\eta^K\left[  \lambda d\left( 2\delta^2\rho^2 + 3\rho^4 + 4l\delta^2\rho - 3l\delta\rho^2 + 6l\rho^3  \right) + l\gamma^2C^2\left(  4\delta^2 + 3\rho^2  \right) \right]  }{12\lambda\rho^4(1 - \eta^K)}  },
\end{aligned}
\end{equation}
where 
\begin{equation}
    \eta  = \max\left( 1 -\left(m+ \dfrac{1}{\rho}\right)^2\left( \dfrac{2\gamma}{M_2 + \frac{1}{\rho}} - \gamma^2 \right), 1 - \lambda\rho_{\min} (BB^T) \right).
\end{equation}
Therefore for any fixed $\rho$, $\forall \epsilon > 0$, $\exists l>0, \delta \in (0, \rho] $ and $K\in \mathbb{N}$, such that $\left\|\nu \mathbf{P}_{\theta_N}-\mathbf{P}_{\pi_\rho}\right\|_{\mathrm{TV}}  < \epsilon$.
\end{theorem}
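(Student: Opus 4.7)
The plan is to interpose the time-$l$ law (with $l=N\delta$) of the continuous-time Moreau-regularised diffusion and split the target TV distance by the triangle inequality,
\[
\|\nu \mathbf{P}_{\theta_N}-\mathbf{P}_{\pi_\rho}\|_{\mathrm{TV}}
\leqslant \|\nu \mathbf{P}_{\theta_N}-\nu \mathbf{P}_{\mathbf{L}^\rho}^{l}\|_{\mathrm{TV}}
+ \|\nu \mathbf{P}_{\mathbf{L}^\rho}^{l}-\mathbf{P}_{\pi_\rho}\|_{\mathrm{TV}},
\]
then control each summand using results already in hand.

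For the second summand I would apply Lemma \ref{pi_rho exponentially fast lemma} to obtain an exponential-in-$l$ decay up to the prefactor $\tfrac{1}{2}\chi^2(\nu\|\pi_\rho)^{1/2}$, and then Lemma \ref{X2 divergence lemma} to bound that prefactor by $(\rho m_\rho)^{-d/4}=\exp\{-\tfrac{d}{4}\log(\rho m_\rho)\}$; the product is exactly the first term of the claimed estimate. For the first summand, Theorem \ref{Markov process induction lemma} identifies the law of $\theta_N$ with the time-$l$ marginal of $\mathbf{D}$, so the marginal TV is dominated by the path-measure TV (data-processing). Pinsker's inequality together with Jensen's inequality, used to push $\mathbb{E}_{x\sim\nu}$ inside the square root, then gives
\[
\|\nu \mathbf{P}_{\theta_N}-\nu \mathbf{P}_{\mathbf{L}^\rho}^{l}\|_{\mathrm{TV}}
\leqslant \sqrt{\tfrac{1}{2}\,\mathbb{E}_{x\sim\nu}\operatorname{KL}\!\left(\mathbb{P}_{\mathbf{L}^\rho}^{x,l}\,\|\,\mathbb{P}_{\mathbf{D}}^{x,l}\right)}.
\]

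I would then insert the bound of Theorem \ref{KL divergence error theorem}: the only $x$-dependent quantity is $U_\rho(x)-U_\rho(x^*)$. Since $x^*$ is a minimizer of $U_\rho$ by Lemma \ref{Moreau lemma}(3), $\nabla U_\rho(x^*)=0$; combining this with the $(1/\rho)$-Lipschitz gradient from Lemma \ref{Moreau lemma}(2) and Lemma \ref{Lipschitz gradient lemma} yields $U_\rho(x)-U_\rho(x^*)\leqslant \tfrac{1}{2\rho}\|x-x^*\|_2^2$, so averaging over $x\sim\nu=\mathcal{N}(x^*,\rho I_d)$ gives $\mathbb{E}_\nu(U_\rho(x)-U_\rho(x^*))\leqslant d/2$. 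Plugging $d/2$ into the KL bound, reducing to the common denominator $6\lambda\rho^4(1-\eta^K)$ and collecting by powers of $\eta^K$ should reproduce precisely the polynomial inside the square root in the claim. For the existence assertion one then chooses parameters in sequence: first $l$ large enough to drive the exponential term below $\epsilon/2$; next, with $l$ fixed, $K$ large enough to suppress the $\eta^K$ contributions inside the square root; finally $\delta\in(0,\rho]$ small (with $N=l/\delta$ growing accordingly) to absorb the remaining $O(\delta)+O(\delta^2)$ residuals.

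The hard part is purely bookkeeping: verifying that once the $\nu$-averaging is carried out and the KL bound is written over the common denominator $6\lambda\rho^4(1-\eta^K)$, its numerator splits exactly into the non-$\eta^K$ piece $\lambda d(2\delta^2\rho^2+4l\delta^2\rho+3l\delta\rho^2)$ and the $\eta^K$ piece stated in the theorem. The only subtle technical point to invoke cleanly is the data-processing step justifying the passage from marginal TV to path TV, which is what permits the use of the Girsanov-based Theorem \ref{KL divergence error theorem} to bound the discretisation-plus-approximation error at time $l$.
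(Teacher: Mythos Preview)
Your proposal is correct and follows essentially the same route as the paper: the same triangle-inequality split, Lemmas~\ref{pi_rho exponentially fast lemma} and~\ref{X2 divergence lemma} for the diffusion-to-invariant term, the data-processing step from marginal to path TV, Pinsker, Theorem~\ref{KL divergence error theorem} for the path KL, and the bound $\mathbb{E}_\nu(U_\rho(x)-U_\rho(x^*))\leqslant d/2$ via the $1/\rho$-Lipschitz gradient of $U_\rho$. One small remark: you do not actually need Jensen to move the $\nu$-average inside; since both path measures share the same time-$0$ marginal $\nu$, the chain rule for KL gives $\mathrm{KL}(\nu\mathbb{P}_{\mathbf{L}^\rho}^{l}\|\nu\mathbb{P}_{\mathbf{D}}^{l})=\mathbb{E}_{x\sim\nu}\mathrm{KL}(\mathbb{P}_{\mathbf{L}^\rho}^{x,l}\|\mathbb{P}_{\mathbf{D}}^{x,l})$ as an equality, which is exactly how the paper proceeds (implicitly).
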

\begin{proof}
See Appendix \ref{Appendix_TV norm error theorem}.

\end{proof}

This upper bound demonstrates that, in order to make the error small one first needs a long burn-in time $l$. While  $l$ is large enough and remains fixed, small discretization step-size $\delta$ and more iterations $K$ will lead to a satisfactory error. This also matches Theorem 2 in \cite{dalalyan2017theoretical}.

\begin{figure}[htbp]
    \centering
    \includegraphics[width=.9\textwidth]{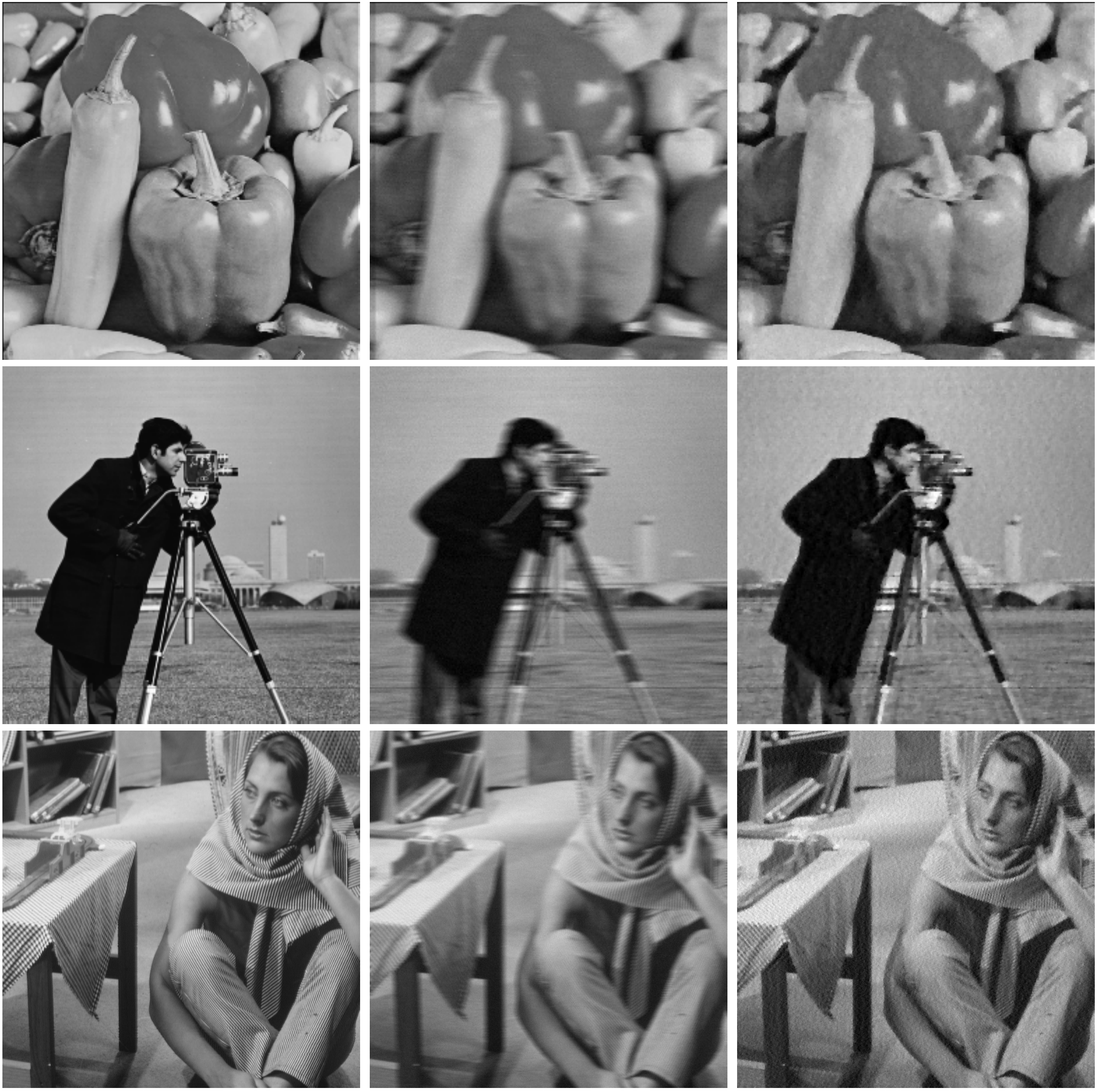}
    \caption{Left: the ground truth. Middle: the blurred and noisy image. Right: the posterior mean.}
    \label{fig:peppers}
\end{figure}

\section{Numerical experiments}\label{sec:examples}

To demonstrate the performance  of the proposed algorithms, we provide two practical examples -- an image motion deblurring problem and a computerized tomography (CT) reconstruction problem. We formulate both problems in the Bayesian framework and therefore sampling their posterior distributions
is the primary goal here.

\textbf{Choices of $K$ and $\delta$:} The stepsize $\delta$ should satisfy the upper bound studied in Section \ref{sec:convergence}. Both $\delta$ and the number of iterations $K$ control a tradeoff between asymptotic accuracy and convergence speed. For ULA-PDFP, using large $\delta$ and small $K$ then the Markov chain will move quickly to its stationary regime, ignoring a larger bias. We recommend using $K = 1$, since in later results $K=1$ leads to a satisfactory bias. However in the situations where a small bias is expected, one should choose a small $\delta$ and a large $K$, though more computation is required during the burn-in time of the Markov chains.

\subsection{Image motion deblurring}

In the image motion deblurring problem, suppose that we use the TV prior, and
 the resulting posterior distribution 
\[
\pi(\theta) \propto\exp \left(  -\dfrac{\| y - A\theta \|_2^2  }{2\sigma^2} - \lambda \| \nabla \theta \|_1  \right), 
\]
where $y(t)$ is the blurred image, $\theta(t)$ is the target image that we want to reconstruct, 
$\sigma^2$ is the observation noise variance (assuming zero-mean Gaussian noise), $\lambda$ is the regularization coefficient, 
and $A$ is a linear motion blur operator in the form of
\[(A\theta)(t) = \int \theta(\tau)K(t - \tau)\text{d}\tau.
\]
We use three commonly used tested images: Peppers, Cameraman and Barbara~(left column in Figure \ref{fig:peppers}).
In all three experiments, we choose $\sigma = 0.01$ and operator $A$ formed by the kernel $K$ of size $10 \times 10$. 
The dimensionality of the unknown images and the associated regularization parameter $\lambda$ values
are given in Table~\ref{tab:deblur1}. 
We use synthetic data (Middle column in Figure \ref{fig:peppers}) generated from the ground truth images (left column in Figure \ref{fig:peppers}). 
The posterior mean is used as an estimator of the original image. In this experiment we draw $10000$ samples
 from the posterior $\pi(
\theta)$. We use the following quantitative  measures to assess the performance of the sampling methods.
To compare the estimation error we compute the peak signal-to-noise ratio (PSNR) of the posterior sample mean, which is used as an estimator of the true image. For sampling efficiency comparison we respectively calculate the effective sample size (ESS) \cite{kass1998markov} and the expected square jumping distance (ESJD) \cite{pasarica2010adaptively} of the samples.

\begin{table}
\centering
\begin{tabular}{c|c|c|c}
& pepper&cameraman&barbara\\
\hline
dimensionality&$256\times256$&$256\times256$&$512\times512$\\
\hline
$\lambda$&0.13&0.12&0.08
\end{tabular}
\caption{Dimensionality and $\lambda$ values.}\label{tab:deblur1}
\end{table}

We first examine the unadjusted algorithms, and we restate that, 
without the Metropolis step, the samples obtained by this type of methods are subject 
to bias. 
Apart from the proposed PDFP based algorithm, we also 
implement Moreau-Yosida unadjusted Langevin algorithm (MYULA) in \cite{durmus2018efficient}. 
Note that in MYULA, 
it is proposed to accurately solve the subproblem by Chambolle2004 \cite{chambolle2004algorithm}, and to have a more comprehensive  comparison, we also implement a slightly modified
version of MYULA -- replacing Chambolle2004 with a $K$-step Chambolle-Pock \cite{chambolle2011first}. 


We summarize the results in Table \ref{tab:ULAEExperiment1},
and note that for ULA-PDFP and MYULA-CP we tested three cases $K=1,5$, and $K=100$.
In particular in the  $K = 100$ cases the subproblem is considered to be precisely solved,
and in fact our numerical experiments suggest that 
most of the  subproblems can meet the stopping criteria $\| x_{n,k+1} - x_{n,k} \|<10^{-5}$ in less than $30$ steps.
For MYULA, the subproblem is solved accurately using Chambolle2004 \cite{chambolle2004algorithm}.
From the table we observe that the PSNR and ESJD of the sample means calculated 
by all the methods are approximately the same,
suggesting that all the methods can produce similar sampling results.
Quite interestingly, the results show that PDFP and CP with $K=1$ can produce results of the same PSNR and ESJD 
as solving the subproblem accurately. 
On the other hand, as has been discussed smaller $K$ leads to less computational burden, 
which is supported by the time cost shown in the table.  
Also ULA-PDFP with $K=1$ seems to be the most efficient one in terms of time cost. 
In summary, the results suggest that while all the algorithms yield similar sampling performance,
those that do not seek to solve the subproblem accurately are significantly more computationally efficient. 

\begin{table}[!ht]
\centering
\scriptsize
\begin{tabular}{|l|r|r|r|r|r|r|r|r|r|r|}
\hline
\multicolumn{2}{|l}{$\rho = 0.01$}& \multicolumn{3}{|c}{peppers}& \multicolumn{3}{|c}{cameraman}&\multicolumn{3}{|c|}{barbara}          \\
\hline
                              &K  &PSNR      &ESJD &time        &PSNR       &ESJD  &time          &PSNR       &ESJD       &time         \\ 
\hline
ULA-PDFP                    &1  &26.48     &1311 &\textbf{55s}&24.13      &1311  &\textbf{59s}  &23.20      &5243       &\textbf{246s}\\ 
ULA-PDFP                    &5  &26.50     &1311 &191s        &24.18      &1311  &195s          &23.20      &5243       &842s         \\ 
ULA-PDFP                    &100&26.42     &1311 &242s        &24.18      &1311  &267s          &23.22      &5243       &1047s        \\ 
\hline
MYULA-CP                    &1  &26.44     &1311 &64s         &24.17      &1310  &66s           &23.18      &5239       &287s         \\ 
MYULA-CP                    &5  &26.49     &1310 &146s        &24.11      &1309  &137s          &23.21      &5237       &656s         \\ 
MYULA-CP                    &100&26.46     &1310 &1097s       &24.16      &1310  &980s          &23.21      &5238       &4133s        \\ 
\hline
MYULA                       &100&26.43     &1310 &551s        &24.17      &1310  &525s          &23.22      &5238       &2421s        \\ 
\hline
\end{tabular}
\caption{Comparison of the unadjusted Langevin algorithms.}
\label{tab:ULAEExperiment1}
\end{table}


Next we test the algorithms with the  additional Metropolis (accept-reject) step included.
More precisely we implement the following algorithms: MALA with subgradient,
the PMALA method in \cite{pereyra2016proximal}, a variant of PMALA with Chambolle2004 replaced by 
$K$-step Chambolle-Pock,
and the proposed PDFP based algorithm denoted as MALA-PDFP. 
 The results of all the methods are compared in Table \ref{tab:MALAPeppers},
 and we reinstate that thanks to the Metropolis step, 
 the samples are asymptotically unbiased. 
For the stability of PMALA and MALA-PDFP, step size $\delta$ should be no larger than parameter $\rho$.  Following \cite{pereyra2016proximal} we fix $\delta = \rho$ and the values of them (that are shown in Table \ref{tab:MALAPeppers})
are chosen such that  the acceptance rates of all the algorithms are around $50\%$ \cite{robert1999monte, roberts2002langevin}
for fair comparison.
First we have found that MALA with subgradient clearly has the worst performance among all the methods, 
a finding agreeing with \cite{pereyra2016proximal}. Moreover, 
 in both MALA-PDFP and PMALA-CP, we can see that the results
 of $K=5$ are rather close to those of $K=100$ and PMALA where in both cases the subproblem is solved accurately. Notably in Table \ref{tab:MALAPeppers} the run time of MALA-PDFP for $K = 100$ is similar or less than that for $K = 5$, this is because in this experiments $\rho$ is much smaller than Table \ref{tab:ULAEExperiment1} and the stopping criteria $\| x_{n,k+1} - x_{n,k} \|<10^{-5}$ is met even $k<5$.  
More interestingly, however, PMALA-CP with $K=1$ yields substantially worse results (in terms of ESS and ESJD) than 
the algorithms that solve the subprobem accurately, while MALA-PDFP with $K=1$ produces results that are comparable 
to those. While this is an interesting indicator that the 1-step MALA-PDFP may be an effective and efficient sampling algorithm,  further investigation and more comprehensive tests of the method are needed.

 

\begin{table}[ht!]
\centering
\begin{tabular}{|l|r|r|r|r|r|r|}
\hline
             & K &PSNR       &ESJD       &ESS       &parameters             &time  \\
             \hline
             pepper & \multicolumn{6}{|l|}{ } \\
\hline
MALA(subgradient)   &   &25.58      &3.9        &4.03           &$\delta =$ 8e-5        & \textbf{93s}  \\
\hline
PMALA-CP          &1  &26.05      &19.4       &4.09           &$\rho = \delta =$ 3e-4 &103s  \\
PMALA-CP          &5  &26.68      &439.0      &4.78           &$\rho = \delta =$ 7e-3 &239s  \\
PMALA-CP          &100&26.70      &427.7      &4.75           &$\rho = \delta =$ 7e-3 &1218s \\
\hline 
PMALA             &100&26.69      &420.9      &4.76           &$\rho = \delta =$ 7e-3 &581s  \\
\hline
MALA-PDFP         &1  &26.61      &441.0      &4.81           &$\rho = \delta =$ 7e-3 &108s  \\
MALA-PDFP         &5  &26.66      &439.8      &4.78           &$\rho = \delta =$ 7e-3 &257s  \\
MALA-PDFP         &100&26.70      &437.6      &4.76           &$\rho = \delta =$ 7e-3 &295s  \\
\hline
cameraman          & \multicolumn{6}{|l|}{ } \\
\hline
MALA(subgradient)  &   &23.65      &3.4        &3.97           &$\delta =$  6e-5      &\textbf{89s}   \\
\hline
PMALA-CP         &1  &24.31      &18.2       &4.05           &$\rho=\delta=$ 4e-4   &107s  \\
PMALA-CP         &5  &24.46      &384.6      &4.68           &$\rho=\delta=$ 6e-3   &179s  \\
PMALA-CP         &100&24.51      &390.9      &4.70           &$\rho=\delta=$ 6e-3   &877s  \\
\hline 
PMALA            &100&24.54      &383.3      &4.65           &$\rho=\delta=$ 6e-3   &442s  \\
\hline
MALA-PDFP        &1  &24.51      &370.7      &4.62           &$\rho=\delta=$ 6e-3   &91s   \\
MALA-PDFP        &5  &24.57      &384.1      &4.67           &$\rho=\delta=$ 6e-3   &230s  \\
MALA-PDFP        &100&24.58      &375.1      &4.66           &$\rho=\delta=$ 6e-3   &234s  \\
\hline
barbara             & \multicolumn{6}{|l|}{ }  \\
\hline
MALA(subgradient)   &   &22.09      &11.3       &3.99           &$\delta =$  5e-5     &\textbf{338s}  \\
\hline
PMALA-CP          &1  &23.11      &47.4       &3.96           &$\rho=\delta=$ 2e-4  &403s  \\
PMALA-CP          &5  &23.28      &1073.8     &4.30           &$\rho=\delta=$ 5e-3  &790s  \\
PMALA-CP          &100&23.23      &993.1      &4.29           &$\rho=\delta=$ 5e-3  &3827s \\
\hline 
PMALA             &100&23.29      &947.2      &4.27           &$\rho=\delta=$ 5e-3  &1793s \\
\hline
MALA-PDFP         &1  &23.30      &934.7      &4.24           &$\rho=\delta=$ 5e-3  &387s  \\
MALA-PDFP         &5  &23.24      &1033.2     &4.30           &$\rho=\delta=$ 5e-3  &978s  \\
MALA-PDFP         &100&23.28      &973.4      &4.28           &$\rho=\delta=$ 5e-3  &865s  \\
\hline
\end{tabular}
\caption{Comparison of the Metropolis-adjusted Langevin algorithms.}
\label{tab:MALAPeppers}
\end{table}

\subsection{Computed Tomography reconstruction of medical image}

In this section we consider the computed tomography (CT) reconstruction problem
with the posterior distribution
\[
\pi(\theta) \propto\exp \left(  -\dfrac{\| y - A\theta \|_2^2  }{2\sigma^2} - \lambda \| \nabla \theta \|_1  \right),
\]
where $\theta\in \mathbb{R}^{256\times256}$ is the unknown XCAT phantom image and  $y\in\mathbb{R}^{512\times 90}$ is the projection observed. The range of $y$ is about $[0, 5.0]^{512\times90}$. 
The observation noise is assumed to be additive white Gaussian noise with standard variance $\sigma = 0.5$
and $\lambda$ is taken to be $50$. The operator $A$ is the Radon transform which can be efficiently computed by a parallelizable algorithm in \cite{gao2012fast} using fan-beam geometry, but still very time-consuming that less calls of $A$ will significantly reduce the time cost. 
In this experiment the number of detectors is $512$ and that of the viewers is $90$
defining a highly ill-posed problem. 

\begin{figure}[ht]
    \centering
    \includegraphics[width=0.6\hsize]{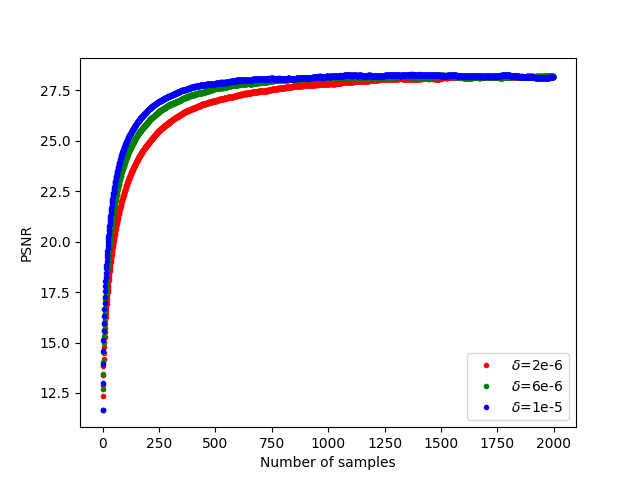}
    \caption{PSNR of the samples (ULA-PDFP) in the burn-in period.}
    \label{fig:PSNR}
\end{figure}

Before the Markov chain reaches its stable regime, the burn-in time takes less than 2000 samples. Smaller stepsize $\delta$ leads to longer burn-in period as shown in Fig. \ref{fig:PSNR}. After the burn-in time, we draw $10000$ samples  from 
the posterior $\pi$ with the same set of unadjusted algorithms in the first example,
and show the results in Table \ref{tab:ULAPhantom}. In all the algorithms we use $\rho=10^{-5}$.
The results in this examples are largely consistent with those reported in the first example:
all the methods produce similar results in terms of PSNR while those with small $K$ are more 
computationally efficient. 
Next we test the Metropolis-adjusted algorithms -- again by drawing $10,000$ samples from
the posterior, and the results are shown in Table \ref{tab:MALAPhantom}. Once again the parameters values are chosen so that the acceptance probability is around $50\%$ \cite{robert1999monte, roberts2002langevin}.  
We observe that in this example the 1-step MALA-PDFP 
has similar performance as the algorithms that solve the subproblem accurately, while 
1-step PMALA-CP is clearly less efficient in terms of both ESS and ESJD,
supporting our results in the first example.

\begin{figure}[ht]
    \centering
    \includegraphics[width=1\hsize]{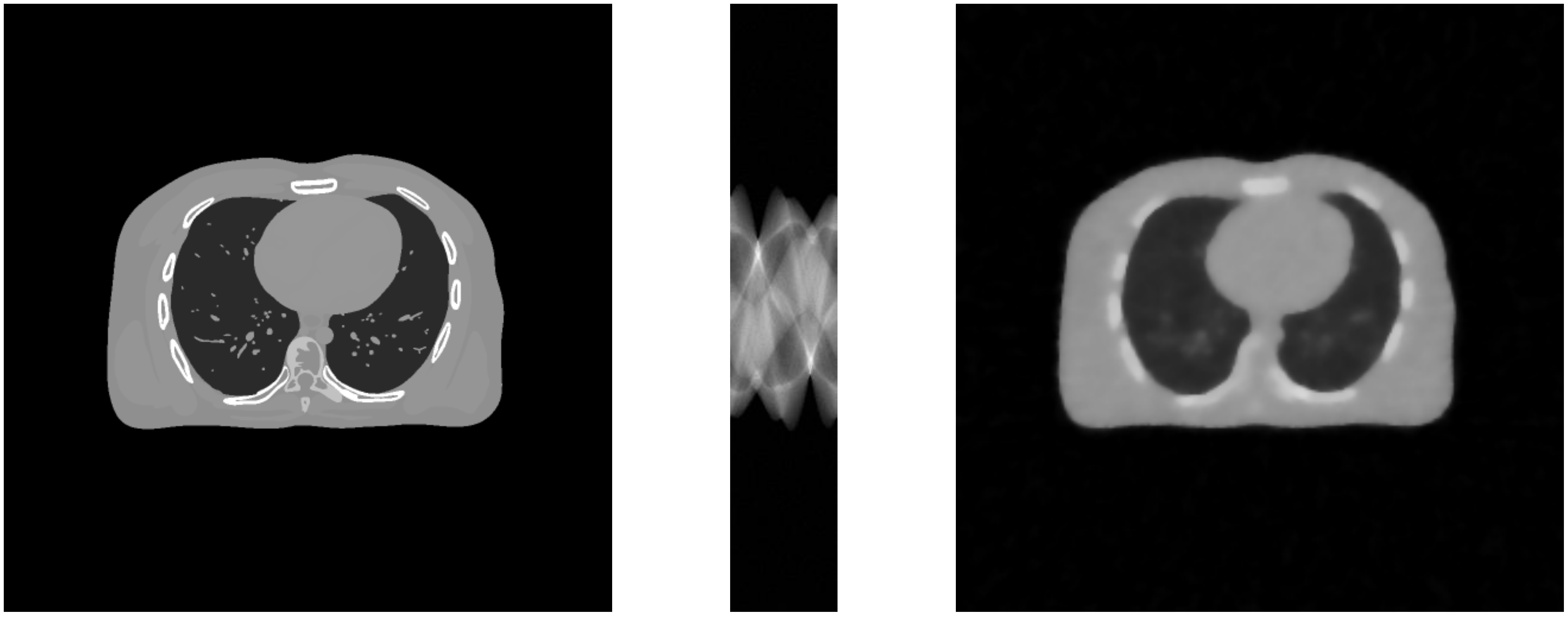}
    \caption{Left: Original image (unknown). Mid: Observation ($512\times 90 $). Right: Reconstructed image (posterior mean).}
    \label{fig:phantom}
\end{figure}

\begin{table}
\centering
\begin{tabular}{|l|r|r|r|}

\hline
                      &  PSNR   & K   &   time   \\
\hline
ULA-PDFP              &  29.22  & 1   &   \textbf{72s}    \\
ULA-PDFP              &  29.26  & 5   &   275s   \\
ULA-PDFP              &  29.26  & 100 &   1513s   \\
\hline 
MYULA-CP              &  29.25  & 1   &   80s   \\
MYULA-CP              &  29.26  & 5   &   166s   \\
MYULA-CP              &  29.24  & 100 &   720s   \\
\hline
MYULA                 &  29.26  & 100 &   1615s   \\
\hline
\end{tabular}
\caption{Comparison of the unadjusted Langevin algorithms.}
\label{tab:ULAPhantom}
\end{table}

\begin{table}[ht!]
\centering
\begin{tabular}{|l|r|r|r|r|r|r|r|}
\hline
                       &  PSNR      & ESJD   & ESS~mean  &  parameters               &  K  &   time  \\
\hline 
PMALA-CP               &  28.30     & 5.6e-4 & 4.03      & $\rho = \delta =$ 1.0e-8  & 1   &   \textbf{98s} \\
PMALA-CP               &  28.51     & 5.8e-3 & 4.87      & $\rho = \delta =$ 1.0e-7  & 5   &   197s  \\
PMALA-CP               &  28.89     & 4.6e-2 & 8.44      & $\rho = \delta =$ 8.0e-7  & 100 &   579s  \\
\hline
PMALA                  &  28.91     & 4.8e-2 & 8.63      & $\rho = \delta =$ 8.0e-7  & 100 &   305s  \\
\hline
MALA-PDFP              &  28.86     & 4.9e-2 & 8.75      & $\rho = \delta=$  8.0e-7  & 1   &  120s  \\
MALA-PDFP              &  28.85     & 4.8e-2 & 8.67      & $\rho=\delta=$    8.0e-7  & 5   &  326s  \\
MALA-PDFP              &  28.86     & 4.8e-2 & 8.66      & $\rho=\delta=$    8.0e-7  & 100 &  352s  \\
\hline
\end{tabular}
\caption{Comparison of the Metropolis-adjusted Langevin algorithms.}
\label{tab:MALAPhantom}
\end{table}

\section{Conclusion}\label{sec:conclusions}

Langevin algorithms are important tools for sampling  posterior distributions 
in Bayesian inference. Since the gradient information is typically needed in the
Langevin algorithms, it is particularly challenging to apply
them to non-smooth distributions. In this work we consider 
the class of methods where one solves a proximity subproblem in each iteration.
In particular we propose to  solves the proximity subproblem  
with the PDFP algorithm,
and more importantly the method only seeks to find an approximate solution of the subproblem 
by conducting a (small) fixed number of PDFP iterations. 
We provide error analysis of the approximate PDFP based algorithms.
Our numerical experiments also suggest that the $1$-step PDFP based algorithms,
especially the Metropolis-adjusted version, yields a good performance, in terms of sampling efficiency and computation time.

\appendix

\section{Proofs}\label{sec:proofs}
\subsection{Lemma~\ref{PDFP norm lemma}}
\label{Appendix_PDFP norm lemma}

\begin{proof}
From the fixed point property by lemma \ref{PDFP fixed point lemma} we know 
\begin{equation}
    \left\{
    \begin{aligned}
    v^* &= \operatorname{prox}_{\frac{\lambda}{\gamma}g^*} \left( \dfrac{\lambda}{\gamma}B\left(x^* - \gamma  \nabla f(x^*)\right) +  (I- \lambda B B^T) v^*   \right) = \operatorname{prox}_{\frac{\lambda}{\gamma}g^*}\left(\dfrac{\lambda}{\gamma}B\phi(x^*) + Mv^*  \right)  \\
    x^* &= x^* - \gamma  \nabla f(x^*)  - \gamma B^T v^*.
    \end{aligned}
    \right.
\end{equation}
Let $ x_k, x_{k+1}, v_k, v_{k+1} $ be the variables in Algorithm \ref{PDFP for optimization}, then 
\begin{equation}
\label{norm of v - v*}
    \begin{aligned}
    & \| v_{k+1} - v^* \|_2^2  =   \left\| \operatorname{prox}_{\frac{\lambda}{\gamma}g^*}\left(\dfrac{\lambda}{\gamma} B\phi(x_k) + Mv_k \right) - \operatorname{prox}_{\frac{\lambda}{\gamma}g^*}\left(\dfrac{\lambda}{\gamma}B\phi(x^*) + Mv^*  \right)  \right\|_2^2       \\
    &\leqslant \left\langle \operatorname{prox}_{\frac{\lambda}{\gamma}g^*}\left(\dfrac{\lambda}{\gamma} B\phi(x_k) + Mv_k \right) - \operatorname{prox}_{\frac{\lambda}{\gamma}g^*}\left(\dfrac{\lambda}{\gamma}B\phi(x^*) + Mv^*  \right),\right.\\
    & \quad\quad\quad \left.\dfrac{\lambda}{\gamma} B\phi( x_k) + Mv_k - \dfrac{\lambda}{\gamma}B\phi(x^*) - Mv^* \right\rangle \\
    & = \dfrac{\lambda}{\gamma}\left\langle  v_{k+1} - v^*, B\left( \phi(x_k) - \phi(x^*) \right)   \right\rangle + \left\langle  v_{k+1} - v^*, M\left( v_k - v^* \right)   \right\rangle.
    \end{aligned}
\end{equation}
The inequality follows from the firmly nonexpansiveness of $\operatorname{prox}_{\frac{\lambda}{\gamma}g^*}(\cdot)$ (Definition \ref{nonexpansive operator}).
By the definition of $x_{k+1}$ in Algorithm \ref{PDFP for optimization},
\begin{equation}
\label{norm of x - x*}
    \begin{aligned}
    &  \left\| x_{k+1} - x^* \right\|_2^2   = \left\|   \phi(x_k) - \phi(x^*) - \gamma B^T\left(  v_{k+1} -  v^*  \right)    \right\|_2^2   \\
    & = \left\|   \phi(x_k) - \phi(x^*) \right\|_2^2 -2\gamma \left\langle \phi(x_k) - \phi(x^*),  B^T\left( v_{k+1} -   v^*  \right)  \right\rangle  + \dfrac{\gamma^2}{\lambda^2}\left\| \lambda B^T\left(  v_{k+1} - v^*  \right) \right\|_2^2 \\
    & = \left\|   \phi(x_k) - \phi(x^*) \right\|_2^2 -2\gamma \left\langle \phi(x_k) - \phi(x^*),  B^T\left( v_{k+1} -   v^*  \right)  \right\rangle + \dfrac{\gamma^2}{\lambda}\left\|   v_{k+1} -   v^*  \right\|_2^2\\
    & \quad  - \dfrac{\gamma^2}{\lambda}\left\|   v_{k+1} -   v^*   \right\|_M^2.
    \end{aligned}
\end{equation}
Here the last equality follows from the definition  $M = I - \lambda BB^T$ and $  \|  z \| _M := \sqrt{\langle z, Mz \rangle}  $.\\
Combine (\ref{norm of v - v*}) with (\ref{norm of x - x*}),
\begin{equation}
\label{norm of x and v}
    \begin{aligned}
    &\left\|  x_{k+1} - x^* \right\|_2^2 + \dfrac{\gamma^2}{\lambda} \| v_{k+1} - v^* \|_2^2 \\
    &=   \left\|   \phi(x_k) - \phi(x^*) \right\|_2^2 -2\gamma \left\langle \phi(x_k) - \phi(x^*),  B^T\left( v_{k+1} -   v^*  \right)  \right\rangle + \dfrac{2\gamma^2}{\lambda}\left\|   v_{k+1} -   v^*  \right\|_2^2 \\
    & \quad  - \dfrac{\gamma^2}{\lambda}\left\|   v_{k+1} -   v^*   \right\|_M^2  \\
    &\leqslant  \left\|   \phi(x_k) - \phi(x^*) \right\|_2^2 + \dfrac{2\gamma^2}{\lambda}\left\langle  v_{k+1} - v^*, M\left( v_k - v^* \right)   \right\rangle  -\dfrac{\gamma^2}{\lambda}\left\|   v_{k+1} -   v^*   \right\|_M^2 \\
    &\quad  -2\gamma \left\langle \phi(x_k) - \phi(x^*),  B^T\left( v_{k+1} -   v^*  \right)  \right\rangle + 2\gamma\left\langle  v_{k+1} - v^*, B\left( \phi(x_k) - \phi(x^*) \right)   \right\rangle \\
    &=  \left\|   \phi(x_k) - \phi(x^*) \right\|_2^2  + \dfrac{2\gamma^2}{\lambda}\left\langle  v_{k+1} - v^*, M\left( v_k - v^* \right)   \right\rangle -\dfrac{\gamma^2}{\lambda}\left\|   v_{k+1} -   v^*   \right\|_M^2 \\
    &=  \left\|   \phi(x_k) - \phi(x^*) \right\|_2^2  + \dfrac{\gamma^2}{\lambda} \left\| v_k - v^* \right\|_M^2 -\dfrac{\gamma^2}{\lambda}\left\|   v_{k+1} -   v_k   \right\|_M^2 \\
    &\leqslant  \eta_1^2\left\|   x_k - x^* \right\|_2^2 + \dfrac{\gamma^2}{\lambda} \left\| v_k - v^* \right\|_M^2  \\
    &\leqslant  \eta_1^2\left\|   x_k - x^* \right\|_2^2 + \dfrac{\gamma^2}{\lambda}(1 - \lambda\rho_{\min} (BB^T)) \left\| v_k - v^* \right\|_2^2.
    \end{aligned}
\end{equation}
The first inequality uses (\ref{norm of v - v*}). The second inequality follows from the condition that $\left\| \phi(x) - \phi(y) \right\|_2\leqslant \eta_1\|x - y\|_2$, $\forall x, y\in \mathbb{R}^d$. The last inequality uses the fact that $0 < \lambda \leqslant \frac{1}{\rho_{\max}(BB^T)}$ and $ 0\preceq M \preceq (1 - \lambda \rho_{\min}(BB^T))I $.  From the definition $\eta := \max\left(  \eta_1^2, 1 - \lambda\rho_{\min} (BB^T)\right)$, obviously  $0\leqslant \eta < 1$ since $  \eta_1^2 < 1$ and $0\leqslant 1 - \lambda\rho_{\min} (BB^T) < 1$. Then from (\ref{norm of x and v}),
\begin{equation}
\begin{aligned}
\left\|  x_{k+1} - x^* \right\|_2^2 + \dfrac{\gamma^2}{\lambda} \| v_{k+1} - v^* \|_2^2 \leqslant \eta \left(\left\|  x_{k} - x^* \right\|_2^2 + \dfrac{\gamma^2}{\lambda} \| v_{k} - v^* \|_2^2\right)\\
\Rightarrow \left\| x_k - x^* \right\|_2^2 + \dfrac{\gamma^2}{\lambda}\left\| v_k - v^* \right\|_2^2 \leqslant \eta^k \left( \left\| x_0 - x^* \right\|_2^2 + \dfrac{\gamma^2}{\lambda}\left\| v_0 - v^* \right\|_2^2 \right).
\end{aligned}    
\end{equation} 
\end{proof}

\subsection{Lemma~\ref{subproblem x norm lemma}}
\label{Appendix_subproblem x norm lemma}

\begin{proof}
Denote the optimal primal and dual solutions of the problem (\ref{PDFP for prox}) by $ x_n^* $ and $ v_n^*  $, exactly $x_n^* = \operatorname{prox}_{\rho U} \left(\theta_{n}\right) $.  Since $ \dfrac{\| x - \theta_n \|^{2}}{2\rho} + f(x) $ is $\left(m + \dfrac{1}{\rho} \right)$-strongly convex with a $\left(M_2 + \dfrac{1}{\rho} \right)$-Lipschitz gradient,  by Lemma \ref{PDFP norm lemma} we have
\begin{equation}
\label{subproblem k-step inequality}
    \left\| x_{n, k} - x_n^* \right\|_2^2 + \dfrac{\gamma^2}{\lambda}\left\| v_{n, k} - v_n^* \right\|_2^2 \leqslant \eta^k\left(  \left\| x_{n, 0} - x_n^* \right\|_2^2 + \dfrac{\gamma^2}{\lambda}\left\| v_{n, 0} - v_n^* \right\|_2^2 \right),
\end{equation}
where
\begin{equation}
\begin{aligned}
\eta 
&= \max\left( 1 -\left(m+ \dfrac{1}{\rho}\right)^2\left( \dfrac{2\gamma}{M_2 + \frac{1}{\rho}} - \gamma^2 \right), 1 - \lambda\rho_{\min} (BB^T) \right)\\
&\geqslant \max\left( 1 - \left(\dfrac{1 + \rho m}{1 +  \rho M_2}\right)^2 , 1 - \lambda\rho_{\min} (BB^T) \right).
\end{aligned}
\end{equation} 
Therefore we get
\begin{equation}
\begin{aligned}
& \left\| T_{n, 2}T_{n}^{K-1}\left(0, \theta_n\right) - \operatorname{prox}_{\rho U}(\theta_n) \right\|_2^2 = \left\| x_{n, K} - \operatorname{prox}_{\rho U} \left(\theta_{n}\right) \right\|_2^2  \\
& \leqslant  \eta^K\left(  \left\| x_{n, 0} - \operatorname{prox}_{\rho U} \left(\theta_{n}\right) \right\|_2^2 + \dfrac{\gamma^2}{\lambda}\left\| v_{n, 0} - v_n^* \right\|_2^2 \right)\\
& = \eta^K\left(  \left\| \theta_n - \operatorname{prox}_{\rho U} \left(\theta_{n}\right) \right\|_2^2 + \dfrac{\gamma^2}{\lambda} \left\| v_n^* \right\|_2^2   \right) \\
& \leqslant \eta^K\left(  \left\| \theta_n - \operatorname{prox}_{\rho U} \left(\theta_{n}\right) \right\|_2^2 + \dfrac{\gamma^2C^2}{\lambda}  \right).
\end{aligned}
\end{equation}
The second inequality follows from the fixed point lemma (\ref{PDFP fixed point lemma}) applied on problem (\ref{PDFP for prox}) that 
\begin{equation}
    v_{n}^* = \operatorname{prox}_{\frac{\lambda}{\gamma}g^*}\left( \dfrac{\lambda}{\gamma}B\left(x_{n}^* - \gamma\left( \nabla f(x_{n}^*) + \dfrac{1}{\rho}(x_{n}^* - \theta_n)\right)  -  \gamma B^T v_{n}^*\right)  + v_{n}^*  \right).
\end{equation}
Then from Lemma \ref{Moreau lemma} (4),
\begin{equation}
\begin{aligned}
& \left\| \dfrac{  T_{n, 2}T_{n}^{K-1}\left(0, \theta_n\right) - \operatorname{prox}_{\rho U}(\theta_n)}{\rho} \right\|_2^2 \leqslant \eta^K \left( \left\|  \dfrac{\theta_n - \operatorname{prox}_{\rho U} \left(\theta_{n}\right)}{\rho} \right\|_2^2 + \dfrac{\gamma^2C^2}{\lambda\rho^2} \right)\\
& = \eta^K \left( \left\| \nabla U_\rho\left( \theta_n \right) \right\|_2^2 + \dfrac{\gamma^2C^2}{\lambda\rho^2} \right).
\end{aligned}
\end{equation}

\end{proof}

\subsection{Lemma~\ref{basic lemma for convergence}}
\label{Appendix_basic lemma for convergence}
\begin{proof}
From  lemma \ref{U_rho lemma 2}, we have
\begin{equation}
\small
\label{U_rho inequality}
    \begin{aligned}
    &   U_\rho\left(x - \dfrac{\delta}{\rho}\left(x - T_{n, 2}T_{n}^{K-1}\left(v, x\right)\right)\right) - U_\rho\left(x\right)    \leqslant  -\delta\left(1 - \dfrac{\delta}{2\rho}\right)\left\| \nabla U_\rho(x)\right\|_2^2 \\ 
    &+ \dfrac{\delta^2}{2\rho}\left\| \dfrac{T_{n, 2}T_{n}^{K-1}\left(v, x\right) - \operatorname{prox}_{\rho U}(x)}{\rho} \right\|_2^2 + \delta\left( 1 - \dfrac{\delta}{\rho} \right)\left\langle \nabla U_\rho(x), \dfrac{T_{n, 2}T_{n}^{K-1}\left(v, x\right) - \operatorname{prox}_{\rho U}(x)}{\rho} \right\rangle \\
    & \leqslant - \dfrac{\delta}{2} \left\| \nabla U_\rho\left( x \right) \right\|_2^2 +  \dfrac{\delta}{2} \left\| \dfrac{ T_{n, 2}T_{n}^{K-1}\left(v, x\right) - \operatorname{prox}_{\rho U}(x)}{\rho} \right\|_2^2. 
    \end{aligned}
\end{equation}
The second inequality follows from Cauchy-Schwarz inequality since $\delta \in (0, \rho]$. \\
By (\ref{ULA by PDFP simplified notation}), (\ref{U_rho inequality}) and Lemma \ref{U_rho lemma 1},
\begin{equation}
\label{U_rho sequence inequality 1}
\begin{aligned}
& \mathbb{E} \left( U_\rho\left( \theta_{n+1} \right) - U_\rho\left( \theta_n \right)  \right)  = \mathbb{E}\left(U_\rho\left(\theta_n - \dfrac{\delta}{\rho}\left(\theta_n - T_{n, 2}T_{n}^{K-1}\left(0, \theta_n\right)\right) + \sqrt{2\delta}\xi_n \right) - U_\rho\left(\theta_n\right)  \right) \\
&\leqslant \dfrac{\delta d}{\rho} -\dfrac{\delta}{2} \mathbb{E}\left\| \nabla U_\rho\left( \theta_n \right) \right\|_2^2 +  \dfrac{\delta}{2} \mathbb{E}\left\| \dfrac{  T_{n, 2}T_{n}^{K-1}\left(0, \theta_n\right) - \operatorname{prox}_{\rho U}(\theta_n)}{\rho} \right\|_2^2.
\end{aligned}
\end{equation}
From Lemma \ref{subproblem x norm lemma} and (\ref{U_rho sequence inequality 1}),
\begin{equation} 
\begin{aligned}
      \mathbb{E} \left( U_\rho\left( \theta_{n+1} \right) - U_\rho\left( \theta_n \right)  \right) 
      &\leqslant \dfrac{\delta d}{\rho} - \dfrac{\delta}{2}(1 - \eta^K) \mathbb{E}\left\| \nabla U_\rho(\theta_n) \right\|_2^2 + \dfrac{\delta\gamma^2C^2\eta^K}{2\lambda\rho^2}\\
      & =   - \dfrac{\delta}{2}(1 - \eta^K) \mathbb{E}\left\| \nabla U_\rho(\theta_n) \right\|_2^2 + \dfrac{2\delta d \lambda\rho + \delta\gamma^2C^2\eta^K}{2\lambda\rho^2} ,
\end{aligned}
\end{equation}
where 
\begin{equation}
    \eta  = \max\left( 1 -\left(m+ \dfrac{1}{\rho}\right)^2\left( \dfrac{2\gamma}{M_2 + \frac{1}{\rho}} - \gamma^2 \right), 1 - \lambda\rho_{\min} (BB^T) \right).
\end{equation}
\end{proof}

\subsection{Theorem~\ref{samples bound lemma}}
\label{Appendix_samples bound lemma}
\begin{proof}
From Lemma \ref{Moreau lemma} (3), $U_\rho$ and $U$ have the same minimizer $x^*$. Therefore $\nabla U_\rho(x^*) = 0$. 

Let $m_\rho =\dfrac{m}{1+\rho m} $. Since $U_\rho$ is $m_\rho$-strongly convex by Lemma \ref{strongly convex Moreau envelope lemma}, it is well known \cite{boyd2004convex} that, 
\begin{equation}
\left\| \nabla U_\rho(x)\right\| _2^2 \geqslant 2m_\rho \left( U_\rho(x) - U_\rho(x^*) \right),    \quad \forall x\in \mathbb{R}^d.
\end{equation}
Together with Lemma \ref{basic lemma for convergence}, $\forall n\in \mathbb{N}$,
\begin{equation}
\begin{aligned}
\mathbb{E} \left( U_\rho\left( \theta_{n+1} \right) - U_\rho\left( x^* \right)  \right) &\leqslant \left(1 - m_\rho\delta(1 - \eta^K)\right)  \mathbb{E}\left( U_\rho(\theta_n) - U_\rho(x^*) \right) + \dfrac{2\delta d \lambda\rho +  \delta\gamma^2C^2\eta^K}{2\lambda\rho^2}\\
\Rightarrow \mathbb{E} \left( U_\rho\left( \theta_{n} \right) - U_\rho\left( x^* \right)  \right)&\leqslant \left(1 - m_\rho\delta(1 - \eta^K)\right)^n  \mathbb{E}\left( U_\rho(\theta_0) - U_\rho(x^*) \right) \\
& \quad \quad + \dfrac{2\delta d \lambda\rho +  \delta\gamma^2C^2\eta^K}{2\lambda\rho^2} \dfrac{1 - (1 - m_\rho\delta(1 - \eta^K))^n}{1 - \left(1 - m_\rho\delta(1 - \eta^K)\right)} \\
& \leqslant \left(1 - m_\rho\delta(1 - \eta^K)\right)^n  \mathbb{E}\left( U_\rho(\theta_0) - U_\rho(x^*) \right) + \dfrac{ 2d \lambda\rho +  \gamma^2C^2\eta^K}{2\lambda\rho^2 m_\rho(1 - \eta^K)}.
\end{aligned}
\end{equation} 
\end{proof}

\subsection{Lemma~\ref{gradient norm sum lemma}}
\label{Appendix_gradient norm sum lemma}
\begin{proof}
From Lemma \ref{basic lemma for convergence},
\begin{equation}
\begin{aligned}
    \dfrac{\delta}{2}(1 - \eta^K) \mathbb{E}\left\| \nabla U_\rho(\theta_n) \right\|_2^2 \leqslant \mathbb{E} \left( U_\rho\left( \theta_{n} \right) - U_\rho\left( \theta_{n+1} \right)  \right) + \dfrac{2\delta d \lambda\rho +  \delta\gamma^2C^2\eta^K}{2\lambda\rho^2},\quad n\in \mathbb{N}.
\end{aligned} 
\end{equation}
Summing the inequalities for $n = 0, 1,\dots,N-1$, we have
\begin{equation}
    \begin{aligned}
    \dfrac{\delta}{2}(1 - \eta^K) \sum_{n = 0}^{N-1}  \mathbb{E}\left\| \nabla U_\rho(\theta_n) \right\|_2^2 &\leqslant \mathbb{E} \left( U_\rho\left( \theta_{0} \right) - U_\rho\left( \theta_{N} \right)  \right) + \dfrac{N\delta \left(2d \lambda\rho +  \gamma^2C^2\eta^K\right)}{2\lambda\rho^2}  \\
    &\leqslant \mathbb{E} \left( U_\rho\left( \theta_{0} \right) - U_\rho\left( x^* \right)  \right) + \dfrac{N\delta \left(2d \lambda\rho +  \gamma^2C^2\eta^K\right)}{2\lambda\rho^2} \\
    \Rightarrow \delta \sum_{n = 0}^{N-1}  \mathbb{E}\left\| \nabla U_\rho(\theta_n) \right\|_2^2 &\leqslant \dfrac{2}{1 - \eta^K}\mathbb{E} \left( U_\rho\left( \theta_{0} \right) - U_\rho\left( x^* \right)  \right) + \dfrac{N\delta \left(2d \lambda\rho + \gamma^2C^2\eta^K\right)}{\lambda\rho^2(1 - \eta^K)}.
\end{aligned} 
\end{equation}
Then from Lemma \ref{subproblem x norm lemma},
\begin{equation}
\begin{aligned}
    & \delta \sum_{n = 0}^{N-1} \mathbb{E}\left\| \dfrac{T_{n, 2}T_{n}^{K-1}(0, \theta_n) - \operatorname{prox}_{\rho U}(\theta_n)}{\rho} \right\|_2^2 \leqslant \eta^K \delta \sum_{n = 0}^{N-1}  \mathbb{E}\left\| \nabla U_\rho(\theta_n) \right\|_2^2 + \dfrac{N\delta \gamma^2C^2\eta^K}{\lambda \rho^2}\\
    & \leqslant \dfrac{2\eta^K}{1 - \eta^K}\mathbb{E} \left( U_\rho\left( \theta_{0} \right) - U_\rho\left( x^* \right)  \right) + \dfrac{N\delta\eta^K \left(2d \lambda\rho + \gamma^2C^2\eta^K\right)}{\lambda\rho^2(1 - \eta^K)} + \dfrac{N\delta \gamma^2C^2\eta^K}{\lambda \rho^2} \\
    & =  \dfrac{2\eta^K}{1 - \eta^K}\mathbb{E} \left( U_\rho\left( \theta_{0} \right) - U_\rho\left( x^* \right)  \right) + \dfrac{N\delta\eta^K \left(2d \lambda\rho + \gamma^2C^2 \right)}{\lambda\rho^2(1 - \eta^K)} . 
\end{aligned}
\end{equation}
Using Cauchy-Schwarz inequality, 
\begin{equation}
    \begin{aligned}
        & \delta \sum_{n = 0}^{N-1} \mathbb{E}\left\| \dfrac{\theta_n - T_{n, 2}T_{n}^{K-1}(0, \theta_n) }{\rho} \right\|_2^2 = \delta \sum_{n = 0}^{N-1} \mathbb{E}\left\| \nabla U_\rho(\theta_n) -  \dfrac{T_{n, 2}T_{n}^{K-1}(0, \theta_n) - \operatorname{prox}_{\rho U}(\theta_n)}{\rho} \right\|_2^2  \\
        & \leqslant 2 \delta \sum_{n = 0}^{N-1}  \mathbb{E}\left\| \nabla U_\rho(\theta_n) \right\|_2^2 + 2 \delta \sum_{n = 0}^{N-1} \mathbb{E}\left\| \dfrac{T_{n, 2}T_{n}^{K-1}(0, \theta_n) - \operatorname{prox}_{\rho U}(\theta_n)}{\rho} \right\|_2^2 \\
        & \leqslant \dfrac{4(1 + \eta^K) }{1 - \eta^K}\mathbb{E} \left( U_\rho\left( \theta_{0} \right) - U_\rho\left( x^* \right)  \right) + \dfrac{4N\delta  \left(d \lambda\rho(1 + \eta^K) + \gamma^2C^2\eta^K \right)}{\lambda\rho^2(1 - \eta^K)}  .
    \end{aligned}
\end{equation}
\end{proof}

\subsection{Theorem~\ref{KL divergence error theorem}}
\label{Appendix_KL divergence error theorem}
\begin{proof}
According to Lemma \ref{KL divergence lemma}, 
\begin{equation}
\small
\label{KL divergence three term}
    \begin{aligned}
&\operatorname{KL}\left(\mathbb{P}_{\mathbf{L}^\rho}^{\mathbf{x}, l} \| \mathbb{P}_{\mathbf{D}}^{\mathbf{x}, l}\right) \leqslant  \frac{1}{4} \sum_{n=0}^{N-1} \int_{n \delta}^{(n + 1) \delta} \mathbb{E} \left\|\nabla U_\rho\left(\mathbf{D}_{t}\right)+\dfrac{T_{n, 2}T_{n}^{K-1}(0, \mathbf{D}_{n\delta} ) - \mathbf{D}_{n\delta}}{\rho} \right\|_{2}^{2}  \mathrm{d} t \\
&= \frac{1}{4} \sum_{n=0}^{N-1} \int_{n \delta}^{(n + 1) \delta} \mathbb{E} \left\|\nabla U_\rho\left(\mathbf{D}_{t}\right)- \nabla U_\rho(\mathbf{D}_{n\delta}) + \nabla U_\rho(\mathbf{D}_{n\delta}) +  \dfrac{T_{n, 2}T_{n}^{K-1}(0, \mathbf{D}_{n\delta} ) - \mathbf{D}_{n\delta}}{\rho}\right\|_{2}^{2}  \mathrm{d} t\\
&\leqslant  \frac{1}{2} \sum_{n=0}^{N-1} \int_{n \delta}^{(n + 1) \delta} \mathbb{E}\left[\left\|\nabla U_\rho\left(\mathbf{D}_{t}\right)- \nabla U_\rho(\mathbf{D}_{n\delta})\right\|_2^2 + \left\|\nabla U_\rho(\mathbf{D}_{n\delta}) + \dfrac{T_{n, 2}T_{n}^{K-1}(0, \mathbf{D}_{n\delta} ) - \mathbf{D}_{n\delta}}{\rho}\right\|_{2}^{2}\right] \mathrm{d} t \\
& = \frac{1}{2} \sum_{n=0}^{N-1} \int_{n \delta}^{(n + 1) \delta} \mathbb{E}\left[\left\|\nabla U_\rho\left(\mathbf{D}_{t}\right)- \nabla U_\rho(\mathbf{D}_{n\delta})\right\|_2^2 + \left\| \dfrac{T_{n, 2}T_{n}^{K-1}(0, \theta_n) - \operatorname{prox}_{\rho U}(\theta_n)}{\rho}\right\|_{2}^{2}\right] \mathrm{d} t.
\end{aligned}
\end{equation}
The last inequality follows from Cauchy-Schwarz inequality.
\\
From lemma \ref{Moreau lemma} (2), $U_\rho$ has $\frac{1}{\rho}$-Lipschitz gradient:
\begin{equation}
    \label{Lipschitz gradient of U_rho}
    \left\|\nabla U_\rho(\mathbf{D}_t) - \nabla U_\rho(\mathbf{D}_{n\delta}) \right\|_2\leqslant \dfrac{1}{\rho} \left\|\mathbf{D}_t - \mathbf{D}_{n\delta} \right\|_2.
\end{equation}
Then 
\begin{equation}
\label{inequality of nabla U_rho}
\begin{aligned}
\frac{1}{2} \sum_{n=0}^{N-1} \int_{n \delta}^{(n + 1) \delta} \mathbb{E}\left\|\nabla U_\rho\left(\mathbf{D}_{t}\right)- \nabla U_\rho(\mathbf{D}_{n\delta})\right\|_2^2 \mathrm{d}t\leqslant \frac{1}{2\rho^2}   \sum_{n=0}^{N-1} \int_{n \delta}^{(n + 1) \delta} \mathbb{E}\left\| \mathbf{D}_{t}- \mathbf{D}_{n\delta}\right\|_2^2\mathrm{d}t.
\end{aligned}
\end{equation}
From the definition of $\mathbf{D}_t$, for $t\in [n \delta, (n + 1)\delta]$, 
\begin{equation}
\begin{aligned}
\mathbf{D}_{t}- \mathbf{D}_{n\delta} &= \int _{n \delta}^{t}b_{\tau}(\mathbf{D}_{\tau})\mathrm{d}\tau + \int _{n \delta}^{t}\sqrt{2}\mathrm{d}\mathbf{W}_{\tau}\\
& = \dfrac{T_{n, 2}T_{n}^{K-1}(0, \mathbf{D}_{n\delta} ) - \mathbf{D}_{n\delta}}{\rho}\int _{n \delta}^{t} \mathds{1}_{[n \delta,(n + 1) \delta]}(\tau)\mathrm{d} \tau  + \sqrt{2}(\mathbf{W}_{t} - \mathbf{W}_{n \delta}) \\
& = \dfrac{T_{n, 2}T_{n}^{K-1}(0, \mathbf{D}_{n\delta} ) - \mathbf{D}_{n\delta}}{\rho}(t - n \delta) + \sqrt{2}(\mathbf{W}_{t} - \mathbf{W}_{n \delta}).
\end{aligned}
\end{equation}
Then
\begin{equation}
\label{integral of Dt-Dm}
    \begin{aligned}
    &\frac{1}{2\rho^2}  \sum_{n=0}^{N-1} \int_{n \delta}^{(n + 1) \delta} \mathbb{E}\left\| \mathbf{D}_{t}- \mathbf{D}_{n\delta}\right\|_2^2\mathrm{d}t \\
    & = \frac{1}{2\rho^2}  \sum_{n=0}^{N-1} \int_{n \delta}^{(n + 1) \delta} \mathbb{E}\left(\left\|  \dfrac{T_{n, 2}T_{n}^{K-1}(0, \mathbf{D}_{n\delta} ) - \mathbf{D}_{n\delta}}{\rho}(t - n \delta) \right\|_2^2+ \left\|\sqrt{2}(\mathbf{W}_{t} - \mathbf{W}_{n \delta})  \right\|_2^2\right)\mathrm{d}t \\
    & = \frac{1}{2\rho^2}  \sum_{n=0}^{N-1} \left(  \dfrac{\delta^3 }{3} \mathbb{E}\left\|  \dfrac{T_{n, 2}T_{n}^{K-1}(0, \mathbf{D}_{n\delta} ) - \mathbf{D}_{n\delta}}{\rho}\right\|_2^2 + \delta^2 d  \right) \\
    & = \frac{ \delta^3}{6\rho^2}  \sum_{n=0}^{N-1}    \mathbb{E}\left\|  \dfrac{T_{n, 2}T_{n}^{K-1}(0, \theta_n ) - \theta_n }{\rho}\right\|_2^2 + \dfrac{\delta l d }{2\rho^2}   .
    \end{aligned}
\end{equation} 
Combine (\ref{KL divergence three term}) with (\ref{inequality of nabla U_rho}, \ref{integral of Dt-Dm}), we have
\begin{equation}
\small
\begin{aligned}
&\operatorname{KL}\left(\mathbb{P}_{\mathbf{L}^\rho}^{\mathbf{x}, l} \| \mathbb{P}_{\mathbf{D}}^{\mathbf{x}, l}\right)
\leqslant \frac{ \delta^3}{6\rho^2}  \sum_{n=0}^{N-1}    \mathbb{E}\left\|  \dfrac{T_{n, 2}T_{n}^{K-1}(0, \theta_n ) - \theta_n }{\rho}\right\|_2^2 + \dfrac{\delta ld }{2\rho^2}    \\
& \quad  + \frac{1}{2} \sum_{n=0}^{N-1} \int_{n \delta}^{(n + 1) \delta} \mathbb{E}\left\| \dfrac{T_{n, 2}T_{n}^{K-1}(0, \theta_n) - \operatorname{prox}_{\rho U}(\theta_n)}{\rho} \right\|_2^2 \mathrm{d} t \\
&= \frac{ \delta^3}{6\rho^2}  \sum_{n=0}^{N-1}    \mathbb{E}\left\|  \dfrac{T_{n, 2}T_{n}^{K-1}(0, \theta_n ) - \theta_n }{\rho}\right\|_2^2 + \dfrac{\delta ld }{2\rho^2} + \frac{\delta}{2} \sum_{n=0}^{N-1}   \mathbb{E}\left\| \dfrac{T_{n, 2}T_{n}^{K-1}(0, \theta_n) - \operatorname{prox}_{\rho U}(\theta_n)}{\rho} \right\|_2^2  .
\end{aligned}
\end{equation}
Combined with Lemma \ref{gradient norm sum lemma}, we obtain the inequality
\begin{equation}
    \begin{aligned}
    &\operatorname{KL}\left(\mathbb{P}_{\mathbf{L}^\rho}^{\mathbf{x}, l} \| \mathbb{P}_{\mathbf{D}}^{\mathbf{x}, l}\right) \leqslant \dfrac{2\delta^2(1 + \eta^K) + 3\rho^2\eta^K}{3\rho^2(1 - \eta^K)} \mathbb{E} \left( U_\rho\left( x \right) - U_\rho\left( x^* \right)  \right)\\
    & \quad + \dfrac{ld\lambda\rho\left(4\delta^2(1+\eta^K) + 3\delta\rho(1 - \eta^K) + 6 \rho^2\eta^K\right) + l\gamma^2C^2\eta^K(4\delta^2 + 3 \rho^2)  }{6\lambda\rho^4(1 - \eta^K)}.
    \end{aligned}
\end{equation}
\end{proof}

\subsection{Theorem~\ref{TV norm error theorem}}
\label{Appendix_TV norm error theorem}
\begin{proof}
From triangular inequality we have
\begin{equation}
\label{TV norm triangular inequality}
    \left\|\nu \mathbf{P}_{\theta_N}-\mathbf{P}_{\pi_\rho}\right\|_{\mathrm{TV}}=\left\|\nu \mathbf{P}_{\mathbf{D}}^{N\delta}-\mathbf{P}_{\pi_\rho}\right\|_{\mathrm{TV}} \leqslant\left\|\nu \mathbf{P}_{\mathbf{L^\rho}}^{l}-\mathbf{P}_{\pi_\rho}\right\|_{\mathrm{TV}}+\left\|\nu \mathbf{P}_{\mathbf{D}}^{l}-\nu \mathbf{P}_{\mathbf{L^\rho}}^{l}\right\|_{\mathrm{TV}}.
\end{equation}
From Lemma \ref{pi_rho exponentially fast lemma} and Lemma \ref{X2 divergence lemma},
\begin{equation}
\label{TV norm inequality 1}
    \left\| \nu \mathbf{P}_{\mathbf{L}^\rho}^l - \mathbf{P}_{\pi_\rho} \right\|_{\mathrm{TV}}\leqslant \dfrac{1}{2} \chi ^2(\nu\| \pi_\rho)^{1/2} \exp\left(\dfrac{-lm_\rho}{2}\right)\leqslant \dfrac{1}{2}\exp\left( -\dfrac{d}{4}\log(\rho m_\rho) - \dfrac{lm_\rho}{2}  \right).
\end{equation}
By Pinsker inequality,
\begin{equation}
\label{TV norm inequality 2}
    \left\|\nu \mathbf{P}_{\mathbf{D}}^{l}-\nu \mathbf{P}_{\mathbf{L^\rho}}^{l}\right\|_{\mathrm{TV}} \leqslant\left\|\nu \mathbb{P}_{\mathbf{D}}^{l}-\nu \mathbb{P}_{\mathbf{L^\rho}}^{l}\right\|_{\mathrm{TV}} \leqslant\sqrt{ \frac{1}{2} \mathrm{KL}\left(\nu \mathbb{P}_{\mathbf{L^\rho}}^{l} \| \nu \mathbb{P}_{\mathbf{D}}^{l}\right) } .
\end{equation}
By Lemma \ref{KL divergence error theorem} and Lemma \ref{Lipschitz gradient lemma},
\begin{equation}
\small
    \begin{aligned}
    & \mathrm{KL} \left(\nu \mathbb{P}_{\mathbf{L^\rho}}^{l} \| \nu \mathbb{P}_{\mathbf{D}}^{l}\right)  \\
    & \leqslant\dfrac{2\delta^2(1 + \eta^K) + 3\rho^2\eta^K}{3\rho^2(1 - \eta^K)} \dfrac{d}{2} + \dfrac{ld\lambda\rho\left(4\delta^2(1+\eta^K) + 3\delta\rho(1 - \eta^K) + 6 \rho^2\eta^K\right) + l\gamma^2C^2\eta^K(4\delta^2 + 3 \rho^2)  }{6\lambda\rho^4(1 - \eta^K)} \\
    & = \dfrac{\lambda d \left( 2\delta^2\rho^2 + 4l\delta^2\rho + 3l\delta\rho^2   \right) +\eta^K\left[  \lambda d\left( 2\delta^2\rho^2 + 3\rho^4 + 4l\delta^2\rho - 3l\delta\rho^2 + 6l\rho^3  \right) + l\gamma^2C^2\left(  4\delta^2 + 3\rho^2  \right) \right]  }{6\lambda\rho^4(1 - \eta^K)} .
    \end{aligned}
\end{equation}
From (\ref{TV norm triangular inequality}, \ref{TV norm inequality 1}, \ref{TV norm inequality 2}) and above,

\begin{equation}
\small
\begin{aligned}
    &\left\|\nu \mathbf{P}_{\theta_N}-\mathbf{P}_{\pi_\rho}\right\|_{\mathrm{TV}}\leqslant  \dfrac{1}{2}\exp\left( -\dfrac{d}{4}\log(\rho m_\rho) - \dfrac{lm_\rho}{2}  \right)+ \\
    & \sqrt{ \dfrac{\lambda d \left( 2\delta^2\rho^2 + 4l\delta^2\rho + 3l\delta\rho^2   \right) +\eta^K\left[  \lambda d\left( 2\delta^2\rho^2 + 3\rho^4 + 4l\delta^2\rho - 3l\delta\rho^2 + 6l\rho^3  \right) + l\gamma^2C^2\left(  4\delta^2 + 3\rho^2  \right) \right]  }{12\lambda\rho^4(1 - \eta^K)}  },
\end{aligned}
\end{equation}
where 
\begin{equation}
    \eta  = \max\left( 1 -\left(m+ \dfrac{1}{\rho}\right)^2\left( \dfrac{2\gamma}{M_2 + \frac{1}{\rho}} - \gamma^2 \right), 1 - \lambda\rho_{\min} (BB^T) \right).
\end{equation}
\end{proof}

\bibliographystyle{plain}
\bibliography{reference}







          \end{document}